\newcommand{\End}{\operatorname{End}}
\newcommand{\Hom}{\operatorname{Hom}}
\newcommand{\Z}{\mathbb{Z}}
\newcommand{\Q}{\mathbb{Q}}
\newcommand{\C}{\mathbb{C}}
\newcommand{\N}{\mathbb{N}}
\newcommand{\cat}{\mathscr{C}}
\newcommand{\F}{\mathbb{F}}
\newcommand{\D}{\mathbb{D}}
\newcommand{\HH}{{\ensuremath{\mathcal{H}}}}
\newcommand{\unit}{\ensuremath{\mathbb{I}}}
\newcommand{\tcoev}{\stackrel{\longleftarrow}{\operatorname{coev}}}
\newcommand{\tev}{\stackrel{\longleftarrow}{\operatorname{ev}}}
\newcommand{\ev}{\stackrel{\longrightarrow}{\operatorname{ev}}}
\newcommand{\qs}{q}
\newcommand{\coev}{\stackrel{\longrightarrow}{\operatorname{coev}}}
\newtheorem{definition}{Definition}[subsection]
\newtheorem{theorem}[definition]{Theorem}
\newtheorem*{theorem*}{Theorem}
\newtheorem{proposition}[definition]{Proposition}
\newtheorem{lemma}[definition]{Lemma}
\newtheorem{remark}[definition]{Remark}
\newtheorem*{remark*}{Remark}
\newtheorem{notation}[definition]{Notation}
\newtheorem{corollary}[definition]{Corollary}
\newcounter{exo} \newcounter{numexercice}
\renewcommand{\theexo}{\arabic{exo}}
\newcounter{IntroCounter}
\begin{document}
\title[A topological model for the coloured Alexander invariants]{A topological model for the coloured Alexander invariants} 
  \author{Cristina Ana-Maria Anghel}
\address{Mathematical Institute, University of Oxford, Oxford, United Kingdom} \email{palmeranghel@maths.ox.ac.uk;cristina.anghel@imj-prg.fr} 
  \thanks{ }
  \date{\today}

\begin{abstract}
Coloured Alexander polynomials form a sequence of non-semisimple quantum invariants coming from the representation theory of the quantum group $U_q(sl(2))$ at roots of unity. This sequence recovers the original Alexander polynomial as the first term. We give a topological model for this invariants, showing that they can be obtained as graded intersection pairings between homology classes in a covering of the configuration space in the punctured disc. 
\end{abstract}

\maketitle
\setcounter{tocdepth}{1}
 \tableofcontents
 {
\section{Introduction} 
One of the first invariants discovered in knot theory was introduced by Alexander in 1923,  who defined a knot invariant as a polynomial in one variable. This invariant has many definitions, including one that uses skein theory, but also it can be obtained from the topology of the complement. Later on, the discovery of the Jones polynomial and coloured Jones polynomials due to V. Jones opened up a new area of low dimensional topology, called quantum topology. In 1991, Reshetikhin and Turaev introduced an algebraic and combinatorial recipe that provides link invariants having as input the representation theory of a quantum group.   

In this context, the representation theory of the quantum group $U_q(sl(2))$ with generic $q$, leads to the sequence of coloured Jones polynomials $\{J_N(L,q) \in \Z[q^{\pm}]\}_{N\in \mathbb N}$. This family has the original Jones polynomial as the first term. 

In a mirror situation, if one fixes $q=\xi$ to be a root of unity, then the representation theory of the quantum group at roots of unity $U_{\xi}(sl(2))$ leads to a sequence of invariants called coloured Alexander polynomials $\{ \Phi_{N}(L,\lambda)\in \Z[q^{\pm 1},q^{\pm \lambda}] \}_{N \in \N}$.
This sequence has as the first term the original Alexander polynomial.

In the original case, there is a fundamental question related to the two invariants for knots from above. The Alexander polynomial is well understood in terms of the complement of the knot. However, the connection between the Jones polynomial and the topology of the knot complement is still a deep and mysterious question. 

Another important direction is connected to the fact that these two families of quantum invariants evaluated at roots of unity recover the family of Kashaev invariants. On this line, another main question of quantum topology is the Volume Conjecture stated by Kashaev, which predicts that the limit of the Kashaev invariants recovers the hyperbolic volume of a knot. 
From this, it follows that the two sequences of quantum invariants are conjectured to recover at the limit certain topological information, which is the volume of the complement. 

As we have seen, the Reshetikhin-Turaev receipe that provides the sequences of coloured Jones and Alexander polynomials that recover these two polynomials at the first terms, is typically defined through algebraic and combinatorial tools. 
The question that we are interested in is to give topological models for these invariants. More precisely, to see them as graded intersection pairings between certain homology classes in coverings of configuration spaces.

On the topological part of the story, R. Lawrence \cite{Law} defined a family of braid group representations on the homology of coverings of configuration spaces in the punctured disc.
In \cite{Big}, \cite{Law1}, Lawrence and later Bigelow based on her work, showed that the Jones polynomial can be seen as an intersection pairing between homology classes in the Lawrence representation. They used the skein characterisation of the invariant for the proof.
 Later on, in \cite{Ito3}, Ito presented the loop expansion for the coloured Jones polynomials, as sums of traces of certain Lawrence representations and using this concluded a proof of the Melvin-Morton conjecture.  Then, in \cite{Ito2}, he showed a homological representation formula for the coloured Alexander polynomials, describing them as sums of traces of Lawrence type representations.

Our purpose is to create a connection between the initial definition of these quantum invariants, which has a purely representation theory nature and topology, using braid group actions on the homology of covering spaces and topological intersection pairings between these homologies.

 In \cite{Cr},  we presented a topological model for the sequence of coloured Jones polynomials. More precisely, we showed that one can obtain the coloured Jones polynomial $J_N(L,q)$ as a graded intersection pairing between two homology classes in certain coverings of configuration spaces of points in the punctured disc. We used the Lawrence representation and the description of these invariants as quantum invariants. 
 
 Pursuing this question, we will describe a topological model for the sequence of coloured Alexander invariants. We show that one can obtain $\Phi_N(L,\lambda)$ as a topological intersection pairing between two homology classes in certain versions of homological representations, called special Lawrence representations. 
\subsection*{ Description of the topological tools}

\

In this part, we will present a brief description of the homology groups that occur in the topological model. For $n,m \in \N$ we consider $C_{n,m}$ to be the unordered configuration space of $m$ points in the $n$-punctured disc (the two dimensional disc where we remove $n$ points). Then, we define a $\Z \oplus \Z$ covering space of this configuration space, by specifying a certain surjective morphism $$\varphi: \pi_1(C_{n,m})\rightarrow\Z \oplus \Z.$$
Let  $\tilde{C}_{n,m}$ of $C_{n,m}$ be the covering space corresponding to this morphism. In other words, the deck transformations of the covering are $\Z \oplus \Z=<x>_{\Z} \oplus <d>_{\Z}$. The first homology group that we use is the Borel-Moore homology of this covering space $$\HH^{lf}_{m}(\tilde{C}_{n,m}, \Z).$$ Using the deck transformations, this becomes a $\Z[x^{\pm},d^{\pm}]$-module. On the other hand, since the braid group $B_n$ is the mapping class group of the punctured disc, one gets an action of the braid group on this homology of the covering, which is compatible with the deck-transformations structure:

$ \ \ \ \ \ \ \ \ \ \ \ \ \ \ \ \ \ \ \ \ \ \ \ \ MCG \ \ \ \ \ \ \ \ \ \ \ \ \ \ \ \ \ \ \ \ \ \ \ \ \ \ Deck \ \ \ \ \ \ \ $
$$B_n\curvearrowright H^{lf}_m(\tilde{C}_{n,m},\Z)=\Z[x^{\pm},d^{\pm}]-module.$$
Certain subspaces in this homology were used by Kohno \cite{Koh}, who related them to the highest weight spaces in the $U_q(sl(2))$-Verma modules and later by Ito \cite{Ito2}, who defined a certain quotient of this group in order to create the connection towards the highest weight spaces of $U_q(sl(2))$-modules at roots of unity. All these are versions of Lawrence representations and they are discussed in Section \ref{3}. 

For our purpose, we have to handle in the same time good homological correspondents for highest weight spaces at roots of unity as well as defining dual homological groups. Here, by dual we mean that we need a non-degenerate intersection form between the first homological group and its dual, which should remain non-degenerate when specialised to roots of unity. All these details are discussed in section \ref{3'}. In order to solve the non-degeneracy problem, we will use a variation of the usual Borel-Moore homology, by splitting the "infinity" part of this space into two pieces. The boundary of $C_{n,m}$ has a component which is described by the fact that one of the points approaches a puncture and a different component which is characterised by the fact that two points collide at the open boundary of the configuration space. The first component corresponds to the hyperplanes inside the symmetric power of the whole disc which are defined by the equations corresponding to the $n$ punctures and we denote it by $H_{\ast}^{lf,\infty}$. The second component $H_{\ast}^{lf,\Delta}$ corresponds to the diagonal of the symmetric power. We will use a Poincar\'e-Lefschetz type duality for the covering $\tilde{C}_{n,m}$ considering the middle homlogies $$H^{lf,\infty}_m(\tilde{C}_{n,m},\Z) \ \ \ \text{and}  \ \ \ H^{lf,\Delta}_m(\tilde{C}_{n,m},\Z,\partial ).$$The rigorous definition of these spaces, called special Lawrence representations is discussed in \cite{CM}.
\subsection*{I.First homology group-special truncated Lawrence representation}
\

Now, we fix $N \in \N$ be the colour of the coloured Alexander invariant that we want to study. We will use certain subspaces inside these groups generated by classes of Lagrangian submanifolds, parametrised by partitions. We use the following notation:
$$ E_{n,m}=\{e=(e_1,...,e_{n-1})\in \N^n \mid e_1+...+e_{n-1}=m \}.$$
$$E^{\geq N}_{n,m}=\{e=(e_1,...,e_{n-1})\in E_{n,m} \mid \exists i , e_i \geq N\}.$$
For each partition $e$, one associates an $m$-dimensional disc $\F_{e}$ in the base space, which can be lifted to a submanifold $\tilde{\F}_e$ in the covering space. The explicit construction is presented in definition\ref{f}. Then, we consider the spaces generated by the classes given by these submanifolds \ref{P:20}:
$$\HH^{\infty}_{n,m}:= <[\tilde{\F}_e] \ | \  e\in E_{n,m}>_{\Z[x^{\pm},d^{\pm}]} \subseteq H^{\text{lf},\infty}_m(\tilde{C}_{n,m}, \Z).$$
$$\HH^{\infty, \geq N}_{n,m}:=<[[\tilde{\F}_e]] \ | \  e\in E^{\geq N}_{n,m}]>_{\Z[x^{\pm}, d^{\pm}]}\subseteq \HH^{\infty}_{n,m}.$$
In order to have a correspondent for the roots of unity situation on the quantum side, we consider the quotient space, and we call it truncated special Lawrence representation:
$$\HH^{\infty,N}_{n,m}:=\HH^{\infty}_{n,m}/\HH^{\infty,\geq N}_{n,m}.$$
\subsection*{ II.The dual homology space}
Dually we consider the subspace in the Borel-Moore homology with respect to collisions and relative to the boundary, which is generated by configurations spaces on vertical segments prescribed by partitions, as in definition \ref{D:21}:
$$\HH^{\Delta,\partial, N}_{n,m}:=<[[\tilde{\D}_f]] \ | \  f\in E^{\geq N}_{n,m}]>_{\Z[x^{\pm}, d^{\pm}]}\subseteq H^{\text{lf},\Delta}_m(\tilde{C}_{n,m}, \Z).$$
\subsection*{ III.Topological pairing}
The advantage of the choice of this splitting of the boundary of the configuration space is the fact that the duality leads to a topological pairing that can be easily computed. We have the following non-degenerate intersection form:$$\ll , \gg: \HH^{\infty, N}_{n,m} \otimes  \HH^{\Delta,\partial,N}_{n,m}\rightarrow \Z[x^{\pm}, d^{\pm}]$$
$$\ll \{ \{ \ [[\tilde{\F}_e]] \} \} \ ,[[\tilde{\D}_f]]\gg=\delta_{e,f}, \ \forall e,f \in E^{N}_{n,m}.$$
In the following picture we sketched the covering space $\tilde{C}_{n,m}$ that we work with as well as the submanifolds whose homology classes lead to the generators for the two homology groups, for the case where $ \left(n\rightarrow 2n-1; m\rightarrow(n-1)(N-1)\right)$.
\begin{center}
\begin{tikzpicture}
\foreach \x/\y in {-1.2/2, -0.2/2 , 1.1/2 , 4/2 , 2.1/2 , 3.1/2 , 5/2 } {\node at (\x,\y) [circle,fill,inner sep=1pt] {};}
\node at (-1,2) [anchor=north east] {$1$};
\node at (0,2) [anchor=north east] {$2$};
\node at (1.5,2) [anchor=north east] {n-1};
\node at (2.3,1.9) [anchor=north east] {n};
\node at (3.5,2) [anchor=north east] {n+1};
\node at (4.5,2) [anchor=north east] {2n-2};
\node at (5.8,2) [anchor=north east] {2n-1};
\node at (-0.6,2.7) [anchor=north east] {$\color{red}e_1$};
\node at (4.6,2.7) [anchor=north east] {$\color{red}e_{2n-1}$};
\node at (0.7,1.5) [anchor=north east] {\color{green}$Conf_{f_1}$};
\node at (4.8,1.5) [anchor=north east] {$\color{green}Conf_{f_{2n-1}}$};
\node at (0.6,3) [anchor=north east] {\huge{\color{red}$\F_e$}};
\node at (3.5,3) [anchor=north east] {\huge{\color{green}$\D_f$}};
\node at (0.8,6) [anchor=north east] {\huge{\color{red}$\tilde{\F}_e$}};
\node at (3,5.4) [anchor=north east] {\huge{\color{green}$\tilde{\D}_f$}};
\node at (-0.3,5.8) [anchor=north east] {\huge{\color{red}$\mathscr{F}^N_n$}};
\node at (4.5,5.8) [anchor=north east] {\huge{\color{green}$\mathscr{G}^N_n$}};
\node at (-3.9,5.7) [anchor=north east] {\Large{$\Phi_N(L,\lambda)$}};
\node at (7.2,3.5) [anchor=north east] {\large{$C_{2n-1,(n-1)(N-1)}$}};
\node at (7.2,7) [anchor=north east] {\large{$\tilde{C}_{2n-1,(n-1)(N-1)}$}};

\node at (-1,6.7) [anchor=north east] {$ \ll(\beta_n \cup \mathbb I_{n-1})) {\color{red}\mathscr{F}^N_n }, {\color{green}\mathscr{G}^N_n} \gg_{\tilde{\eta}_{\xi,\lambda}}$};
\draw[->,color=black]             (-2.2,5.5) to node[yshift=-3mm,font=\small]{} (-3.8,5.5);
\draw [very thick,black!50!red,->](-1.2,2) to[out=60,in=110] (-0.2,2);
\draw [very thick,black!50!red,->](-1.2,2) to (-0.2,2);
\draw [very thick,black!50!red,->](4,2) to[out=60,in=110] (5,2);
\draw [very thick,black!50!red,->](4,2) to (5,2);
\draw [very thick,black!50!green,-](-0.7,1) to (-0.7,3);
\draw [very thick,black!50!blue,->](6.6,5.7) to[out=-60,in=70] (6.6,3.5);
 \draw[very thick,black!50!red] (0.5, 5.5) circle (0.8);
\draw [very thick,black!50!green,-](4.7,1) to (4.7,2.95);
 \draw[very thick,black!50!green] (2.2, 5.16) circle (0.9);

\
\draw (2,2) ellipse (4cm and 1.3cm);
\draw (2,5.4) ellipse (4cm and 1.2cm);

\end{tikzpicture}
\end{center}

\subsection*{Topological model}
The main result of this paper is the following:
 
 \begin{theorem}({\bf Topological model for coloured Alexander invariants})\label{THEOREM}
Let $N \in \mathbb N$ be the colour of the invariant and $\xi=e^{\frac{2\pi i}{2N}}$ the primitive $2N^{th}$ root of unity. Consider $n \in \N$ to be a natural number. Then, there exist two families of homology classes $$\mathscr F^{N}_n \in {H^{\infty,N}_{2n-1, (n-1)(N-1)}|}_{\tilde{\gamma}}  \ \ \ \text{ and } \ \ \  \mathscr G^{N}_n \in {H^{\Delta, \partial,N}_{2n-1, (n-1)(N-1)}|}_{\tilde{\gamma}}$$ such that if $L$ is a link for which there exists a braid $\beta \in B_{n}$ with $L=\hat{\beta} $ (normal closure), the $N^{th}$ coloured Alexander invariant of $L$ has the following form:
$$\Phi_N(L,\lambda)= \xi^{(N-1)\lambda w(\beta_n)} \ll (\beta_n \cup \mathbb I_{n-1})) \mathscr{F}^N_n , \mathscr{G}^N_n\gg_{\tilde{\eta}_{\xi,\lambda}}, \ \ \ \forall \lambda \in \C \setminus \Z .$$ 
\end{theorem}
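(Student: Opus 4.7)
The plan is to bridge the quantum-algebraic definition of $\Phi_N(L,\lambda)$ with the topological intersection model by expressing both sides as matrix coefficients of the same braid operator and then matching dual bases.

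\textbf{Step 1 (Quantum side).} I would start from the Reshetikhin-Turaev-type construction of $\Phi_N(L,\lambda)$ at the root of unity $\xi$, using the $N$-dimensional weight module $V_\lambda$ of $U_\xi(sl(2))$. For a link $L=\hat\beta$ with $\beta\in B_n$, I would reformulate the modified closure operation by adjoining $n-1$ trivial strands on the right, so that the modified trace turns into a matrix coefficient on $2n-1$ strands of the form
\[
\Phi_N(L,\lambda) \;=\; \xi^{(N-1)\lambda w(\beta_n)}\, \bigl\langle \mathcal F^N_n,\; (\beta_n\cup\mathbb I_{n-1})\cdot\mathcal G^N_n \bigr\rangle,
\]
where $\mathcal F^N_n$ and $\mathcal G^N_n$ are the distinguished cup and cap vectors in the $(n-1)(N-1)$-weight subspace of $V_\lambda^{\otimes(2n-1)}$. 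The framing exponent is forced by the normalisation of the $R$-matrix under the Markov move.

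\textbf{Step 2 (Homological identification).} Following the Kohno-Ito philosophy, I would construct an isomorphism between the $(n-1)(N-1)$-weight subspace of $V_\lambda^{\otimes(2n-1)}$, specialised at $q=\xi$, and the truncated special Lawrence representation $\HH^{\infty,N}_{2n-1,(n-1)(N-1)}$ specialised via $x\mapsto \xi^{2\lambda}$, $d\mapsto \xi^{2}$. Dually, the quantum dual weight space should be identified with $\HH^{\Delta,\partial,N}_{2n-1,(n-1)(N-1)}$. Both isomorphisms must intertwine the $R$-matrix action of $B_{2n-1}$ on tensor products with the monodromy action on the Lawrence homology, and must respect the non-degenerate pairings on each side so that they descend to the specialised pairing $\tilde\eta_{\xi,\lambda}$.

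\textbf{Step 3 (Matching of classes and pairings).} Using the topological dual basis afforded by the orthogonality $\ll\{\{[[\tilde{\F}_e]]\}\},\,[[\tilde{\D}_f]]\gg=\delta_{e,f}$ indexed by $E^N_{n,m}$, I would expand the quantum vectors $\mathcal F^N_n$ and $\mathcal G^N_n$ in the weight basis and verify that their coefficients match those of $\mathscr F^N_n$ and $\mathscr G^N_n$ in the topological basis $\{[\tilde{\F}_e]\}$ and $\{[\tilde{\D}_f]\}$ respectively. By naturality of the identification in Step 2 under the braid action, the quantum matrix coefficient of Step 1 equals the topological intersection pairing, yielding the theorem.

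\textbf{Main obstacle.} The central difficulty is Step 2: at the root of unity the representation theory of $U_\xi(sl(2))$ is non-semisimple, so the usual weight-space decomposition and spectral arguments break down. One must handle the truncation $\HH^{\infty,N}_{n,m}=\HH^{\infty}_{n,m}/\HH^{\infty,\geq N}_{n,m}$ together with its dual compatibly with the braid group action, and prove that the intersection form remains non-degenerate after specialisation at $\xi$. The refined splitting of the boundary into the puncture infinity component $H^{lf,\infty}_\ast$ and the diagonal component $H^{lf,\Delta}_\ast$ (Section \ref{3'}) is precisely what makes this non-degeneracy survive, and establishing the intertwining together with the non-degeneracy at $q=\xi$ is the technical heart of the argument.
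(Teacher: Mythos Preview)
Your outline captures the broad architecture (quantum side $\to$ Kohno--Ito identification $\to$ pairing), but it misses the main technical obstacle of the paper's proof, and your Step~1 as written cannot be carried out.

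The cup and cap vectors do \emph{not} live in $V_\lambda^{\otimes(2n-1)}$: the Reshetikhin--Turaev coevaluation sends $\C$ into $U_N(\lambda)^{\otimes(n-1)}\otimes (U_N(\lambda)^*)^{\otimes(n-1)}$, and the representation $U_N(\lambda)$ is \emph{not self-dual} over $U_\xi(sl(2))$. So there is no ``distinguished cup vector in the weight subspace of $V_\lambda^{\otimes(2n-1)}$'' to speak of. The paper devotes its Steps~1--3 (Section~\ref{6}) to resolving exactly this: one constructs by an explicit recursion a sequence of $\C$-linear (not $U_\xi$-linear) isomorphisms $f_k:U_N(\lambda)^*\to U_N(\lambda)$ assembled into $\psi^N_n=f_1\otimes\cdots\otimes f_{n-1}$, chosen so that $(Id^{\otimes n}\otimes\psi^N_n)$ applied to $v_0\otimes v^{n-1}_{Coev}$ lands in the \emph{highest} weight space $W^{\xi,\lambda}_{2n-1,(n-1)(N-1)}$ (not merely the weight space --- the Kohno--Ito isomorphism is with $W$, not $V$). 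One then checks that inserting $\psi^N_n$ and its inverse does not change the invariant, because the nontrivial part of $\psi^N_n$ acts only on the trivial strands $\mathbb I_{n-1}$. This normalisation is the genuine work; without it Step~2 of your proposal has nothing to identify.

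A second omission is the globalisation. The identification in Step~2 and the construction of $\mathscr F,\mathscr G$ naturally produce $\lambda$-dependent classes $\mathscr F^{N,\lambda}_n,\mathscr G^{N,\lambda}_n$ in homologies specialised by $\psi_{\xi,\lambda}$. The theorem claims classes over the two-variable ring $\mathscr I=\Z[q^{\pm1},s^{\pm1}](\mathscr L)$, independent of $\lambda$. The paper (Section~\ref{10}) lifts $\Theta^{N,n}_{\xi,\lambda}$, the coevaluation, and the evaluation to this localisation, checks that $\Theta^{N,n}$ becomes invertible there, and then proves that the resulting global classes specialise correctly. Your proposal does not address this lift. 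Finally, note two smaller slips: the paper's $\mathscr G$ is obtained from the evaluation via non-degeneracy of the pairing, not via a direct isomorphism of a ``dual weight space'' with $\HH^{\Delta,\partial,N}$; and the braid acts on $\mathscr F$, not on $\mathscr G$.
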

\begin{remark*}
In the above formula $\ll \cdot , \cdot \gg$ is a topological intersection pairing which has values in a certain localisation $\mathscr I$ of the Laurent polynomial ring with two variables, included in $\mathbb Q(q,s)$, and the function $\tilde{\eta}_{\xi,\lambda}$ is a certain specialisation of coefficients, which evaluates this pairing to a complex number.
\end{remark*}

In the sequel we will discuss about the terms that occur in this model. We would like to emphasise that the two homology classes $\mathscr F^{N}_n$ and $\mathscr G^{N}_n$ are intrinsic and do not depend on the link. Moreover, they live in a specialisation of the special truncated Lawrence representation which is defined over a ring $\mathscr I$ in two variables.
The moment when the link plays a role is encoded into the braid group action of $\beta_n \cup I^{n-1}$. This action leads to a homology class which is still over two variables. Moreover, the particular choice of the dual space, ensures that the pairing $\ll \cdot , \cdot \gg$ specialised to the ring $\mathscr I$ is still non-degenerate. We obtain:
$$\ll (\beta_n \cup I^{n-1}) \mathscr{F}^N_n , \mathscr{G}^N_n\gg_{\tilde{\gamma}} \ \  \in \mathscr I=\mathbb Z[s^{\pm},q^{\pm1}](\mathscr L)\subseteq \mathbb Q(q,s).$$
The precise definitions are discussed in section \ref{10}. The last step, as presented in the picture below, is to evaluate this polynomial using the specialisation $\tilde{\eta_{\xi,\lambda}}$, which leads to a complex number. This is the coloured Alexander invariant.
\begin{center}
\begin{tikzpicture}
[x=1mm,y=1mm]
\node (t1) [color=red]   at (-4,-8)   {$(\beta_n\cup \mathbb I_{n-1})\mathscr F_{n}^N$};
\node (t2) [color=green]   at (34,-8)   {$\mathscr G_{n}^N$};
\node (b0) [color=red]   at (-20,7)   {$B_{2n-1}\curvearrowright$};

\node (b1) [color=cyan]   at (-30,0)   {$\ll ,\gg_{\tilde{\gamma}}:$};
\node (b2)  [color=black] at (-5,0)    {${\HH^{\infty,N}_{\big(2n-1,(n-1)(N-1)\big)}|}_{\tilde{\gamma}}$};
\node (b3) [color=black] at (35,0)   {${\HH^{\Delta,\partial,N}_{\big(2n-1,(n-1)(N-1)\big)}|}_{\tilde{\gamma}}$};

\node (b4) [color=cyan]   at (75,0)   {$\mathscr I=\mathbb Z[s^{\pm},q^{\pm1}](\mathscr L)$};

\node (b4') []   at (80,-30)   {$\C$};
\draw[->,color=black]             (b3) to node[yshift=-3mm,font=\small]{} (b4);
\node (b3') [color=black] at (15,0)   {$\bigotimes$};
\node (c) [color=cyan]   at (70,-8)   {$\ll (\beta_n \cup \mathbb I_{n-1})) {\color{red}\mathscr{F}^N_n }, {\color{green}\mathscr{G}^N_n} \color{cyan}\gg_{\tilde{\gamma}}$};
\node (ADO) []   at (68,-30)   {$\Phi_N(L,\lambda)\in$};
\draw[->,color=black]             (68,-14) to node[xshift=-8mm,font=\small]{$\tilde{\eta}_{\xi,\lambda}$} (68,-25);
\node (eta) [color=black] at (79,-15)   {$\tilde{\eta}_{\xi,\lambda}(q)=\xi$};
\node (eta') [color=black] at (80,-20)   {$\tilde{\eta}_{\xi,\lambda}(s)=\xi^{\lambda}$};
\end{tikzpicture}
\end{center}
We would like to stress that the non-semisimplicity of the coloured Alexander invariant on the algebraic side can be read homologically. For a link that can be obtained as a closure of a braid with $n$ strands, we used the configuration space of points in the $(2n-1)$-punctured disc. In contrast, for the case of the coloured Jones polynomials, for example, in the topological model from \cite{Cr}, we used the configuration space of points in the $2n$-punctured disc. This difference is the result of the fact that the coloured Jones polynomial as a quantum invariant comes from a quantum trace whereas the coloured Alexander invariant is defined using a partial quantum trace. The topological effect of the partial trace can be read through the fact that we used one puncture less in the non-semi simple situation.

\subsection*{ Structure of the paper}

 This paper is split into ten main parts. In Section \ref{section2}, we present the quantum group $U_q(sl(2))$ that we work with and properties about its representation theory. Then, Section \ref{section3} concerns the notion of highest weight spaces and generic quantum representations of the braid groups. Section \ref{2} is devoted to the case where $q$ is a root of unity, the representation theory in this situation and the definition of the coloured Alexander polynomials. In Section \ref{3}, we present the homological braid group representations introduced by Lawrence and their truncated version defined by Ito. Then, section \ref{3'} continues with a version of these representations, called special Lawrence representations, which will help when one makes the specialisation to the roots of unity. Section \ref{4} is devoted to the connection between homological and quantum representations of the braid group. In Section \ref{4'}, we define a suitable version of the dual Lawrence representation, and we discuss the non-degeneracy of a topological intersection pairing between the special Lawrence representation and its dual. Section \ref{5} deals with the specialisations of this pairing at roots of unity. In Section \ref{6}, we present a topological model for the coloured Alexander invariants with homology classes that live in certain specialisations of the homology groups. Then, Section \ref{10} is devoted to the globalisation of these homology classes, showing that they can be lifted towards two classes in certain homology groups which are defined over a ring which has two parameters.
\clearpage
\subsection*{Acknowledgements} 
 I would like to thank Professor Christian Blanchet for guiding me towards this research direction and for inspiring conversations that we had over the years. This paper was prepared at the University of Oxford, and I acknowledge the support of the European Research Council (ERC) under the European Union's Horizon 2020 research and innovation programme (grant agreement No 674978).
 
\section{Representation theory of $U_q(sl(2))$ at roots of unity}\label{section2}
\subsection{The quantum group $U_q(sl(2))$}
In this part, we will present the quantum enveloping algebra of $sl(2)$ that we will work with and discuss about its representation theory at roots of unity.
We will work over $\mathbb C$.
\begin{notation} 1) We use the following notations:
 $$ \{ x \} :=\qs^x-\qs^{-x} \ \ \ \ [x]_{\qs}:= \frac{\qs^x-\qs^{-x}}{\qs-\qs^{-1}}$$
$$[n]_\qs!=[1]_\qs[2]_\qs...[n]_\qs$$
$${n \brack j}_\qs=\frac{[n]_\qs!}{[n-j]_\qs! [j]_\qs!}.$$
\end{notation}
\begin{definition}
Two complex numbers $\lambda, q \in \C$ are said to be generic if $$[\lambda-k]_q\neq0, \forall k\in \N.$$
\end{definition}
\begin{notation}(Specialisation of coefficients)\label{N:spec}\\ 
Let $R$ be a ring and $M$ an $R$-module with a basis $\mathscr B$.
Consider $S$ to be another ring and suppose that we have a specialisation of the coefficients, given by a ring morphism:
$$\psi: R \rightarrow S.$$
We will denote by $M|_{\psi}$ to be the specialisation of the module $M$ by the change of coefficients $\psi$, which is the following $S$-module: $$M|_{\psi}:=M \otimes_{R} S$$ with the basis $$\mathscr B_{M|_{\psi}}:=\mathscr B \otimes_{R}1 \in M|_{\psi}. $$
\end{notation}
\begin{definition}
2) Consider the quantum group $U_q(sl(2))$ generated by the elements $E, F, K^{\pm1}$ subject to the relations:
$$KK^{-1}=K^{-1}K=1; KE=\qs^2EK; KF=\qs^{-2} FK$$
$$[E,F]=\frac{K-K^{-1}}{q-q^{-1}}.$$ 
\end{definition}
This is a Hopf algebra with the following comultiplication, counit and antipode:
\begin{align*}
\Delta(E)= &E\otimes K+ 1 \otimes E ,  & S(E)=&-EK^{-1}\\
\Delta (F) = &F\otimes 1+ K \otimes F, & S(F)=&-KF\\
\Delta(K^{\pm 1})=&K^{\pm 1}\otimes K^{\pm 1}, & S(K)=&K^{-1}.\\
\end{align*}

Now we will describe the representation theory of $U_q(sl(2))$. 
\begin{definition} (The universal Verma module)
Let $\lambda \in \C$ a complex parameter.
Consider $\hat{V}_{\lambda}$ to be a complex vector space generated by the following infinite family of vectors:
$$\hat{V}_{\lambda}:=<\hat{v}_0, \hat{v}_1,... >_{\C}$$ defined with the following $U_q(sl(2))-$ action:
\begin{align*}
K\hat{v_i}=&\qs^{\lambda-2i}\hat{v}_i\\
E \hat{v_i}=&\hat{v}_{i-1}\\
F \hat{v_i} =& [i+1]_q[\lambda-1]_q  \ \hat{v}_{i+1}.
\end{align*}
\end{definition}
\begin{remark} Let $N$ be a natural number. We notice that if we look at the action of the generator $F$ on the vector $v_{N-1}$ that is
$$F \hat{v}_{N-1}=[(N-1)+1]_q[\lambda-(N-1)]_q \hat{v}_{N}.$$ 
If one has this coefficient that vanishes, 
$$[(N-1)+1]_q[\lambda-(N-1)]_q=0 $$
one gets as an $N$-dimensional subrepresentation in the generic Verma module:
$$U^{q,\lambda}_N:=<\hat{v}_0,...\hat{v}_{N-1}>\subseteq \hat{V}_{\lambda}. $$ 
There are two cases when this happens:\\
$1)$ \textbf{Coloured Jones polynomial}

( {\bf $\lambda=N-1$ fixed parameter and $q$ variable}).

In this case, $U^{q,N-1}_N$ is isomorphic to the standard $N$-dimensional irreducible representation of $U_q(sl(2))$. The Reshetikhin-Turaev procedure for this invariant leads to the $N^{th}$  coloured Jones polynomial for links $$J_N(L,q)\in \Z[q^{\pm 1}].$$

$2)$\textbf{Coloured Alexander polynomial (ADO)}

{\bf ( $\lambda$ variable and $q=\xi=e^{\frac{2 \pi i}{2N}}$-$2N^{th}$ root of unity )}

 As we will discuss in the following section, $U^{\xi,\lambda}_N$ rise to a one parameter family of representations of $U_{\xi}(sl(2))$. The $N$-th ADO invariant is obtained from this input through a renormalised Reshetikhin-Turaev type procedure:
  $$\phi_N(L,\lambda)\in \Z[q^{\pm 1},q^{\pm \lambda}].$$
\end{remark}

\begin{proposition}
1) The universal R-matrix of the quantum group $U_q(sl(2))$ has the following form:
$$ R=q^{\frac{H \otimes H}{2}} \sum_{n=0}^{\infty} q^{\frac{n(n-1)}{2}}\frac{\{1\}^{2n}}{ \{ n\}_q!} E^n\otimes F^n.$$
where $q^H=K$ as operators. This action on the tensor product of two generic Verma modules $\hat{V}_{\lambda_1}\otimes \hat{V}_{\lambda_2} $ means:
$$q^{\frac{H \otimes H}{2}}(\hat{v}_i\otimes \hat{v}_j)= q^{\frac{1}{2}(\lambda_1-2i)(\lambda_2-2j)}.$$
2) Consider the following operator:
$$\mathscr R_{\lambda_1,\lambda_2}:= R \circ \tau \in Aut_{U_q(sl(2))}(\hat{V}_{\lambda_1}\otimes \hat{V}_{\lambda_2}).$$
\end{proposition}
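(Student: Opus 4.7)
The proposition bundles two claims: the explicit form of the universal $R$-matrix of $U_q(sl(2))$ together with the action of its Cartan factor $q^{H\otimes H/2}$ on the weight basis of a tensor product of generic Verma modules, and the assertion that the twist $\mathscr R_{\lambda_1,\lambda_2}=R\circ\tau$ is a $U_q(sl(2))$-intertwiner. My plan is to treat the two parts in order, dispose of the well-definedness of the infinite sum in $R$ along the way, and isolate the one non-formal step at the end.

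For part (1), the displayed expression for $R$ is Drinfeld's classical quasi-triangular structure on $U_q(sl(2))$, so my first move is to cite this standard fact; alternatively, one verifies by induction on the power of $E\otimes F$ the three quasi-triangularity axioms
$$(\Delta\otimes\mathrm{id})R=R_{13}R_{23},\qquad (\mathrm{id}\otimes\Delta)R=R_{13}R_{12},\qquad R\,\Delta(x)=\Delta^{\mathrm{op}}(x)\,R,$$
using the commutator identity $[E,F^n]=[n]_q F^{n-1}(q^{-(n-1)}K-q^{n-1}K^{-1})/(q-q^{-1})$ together with its $E\leftrightarrow F$ analogue. The infinite sum defining $R$ is well-defined as an operator on $\hat V_{\lambda_1}\otimes\hat V_{\lambda_2}$ because the $E$-action on a Verma module is locally nilpotent: since $E\hat v_i=\hat v_{i-1}$ with $\hat v_{-1}=0$, on any basis vector $\hat v_i\otimes\hat v_j$ only the summands with $n\leq j$ contribute, and the formal series collapses to a finite sum. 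The explicit action of the Cartan factor follows by unwinding $q^H=K$: since $K$ acts on $\hat v_i$ by $q^{\lambda-2i}$, the symbol $H$ acts by $\lambda-2i$, hence $H\otimes H$ acts on $\hat v_i\otimes\hat v_j$ by $(\lambda_1-2i)(\lambda_2-2j)$, and exponentiating produces the stated scalar.

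For part (2), the intertwining property is a direct consequence of quasi-triangularity combined with the tautology $\tau\circ\Delta(x)=\Delta^{\mathrm{op}}(x)\circ\tau$ for the flip $\tau$. Composing,
$$(R\circ\tau)\circ\Delta(x)=R\circ\Delta^{\mathrm{op}}(x)\circ\tau=\Delta(x)\circ(R\circ\tau),$$
so $\mathscr R_{\lambda_1,\lambda_2}$ commutes with the diagonal action, understood as a morphism $\hat V_{\lambda_1}\otimes\hat V_{\lambda_2}\to\hat V_{\lambda_2}\otimes\hat V_{\lambda_1}$ that shares the same underlying vector space. For invertibility, the triangular shape $R=q^{H\otimes H/2}\bigl(\mathrm{Id}+N\bigr)$ does the job: $N$ strictly raises the $F$-grading on the second tensor factor and is locally nilpotent on each weight space, so $\mathrm{Id}+N$ is invertible termwise; the Cartan factor is diagonal with non-zero eigenvalues on the weight basis; and $\tau$ is obviously a bijection. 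Composing yields the asserted automorphism.

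The main obstacle is the intertwining axiom $R\,\Delta(x)=\Delta^{\mathrm{op}}(x)R$ for $x=E$ and $x=F$: the $K$-case is a weight-grading triviality, but the $E$ and $F$ cases produce telescoping sums of $q$-binomial coefficients that must be matched term by term. This is the only non-formal input; everything else reduces to local finiteness of the $E$-action on Verma modules and elementary manipulations in the weight basis.
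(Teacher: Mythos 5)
The paper offers no proof of this proposition: it is quoted as a standard fact about the quasi-triangular structure of $U_q(sl(2))$ (as in the references \cite{JK}, \cite{Ito2}, \cite{ADO}), so there is no in-paper argument to compare against. Your outline is the standard verification and is sound in structure --- citing or re-deriving the quasi-triangularity axioms, using local nilpotence of $E$ on Verma modules to make sense of the infinite sum, reading off the Cartan factor from $K\hat v_i=q^{\lambda-2i}\hat v_i$, and deducing the intertwining property of the braiding from the axiom $R\,\Delta(x)=\Delta^{\mathrm{op}}(x)\,R$ together with invertibility of the unipotent part on each (finite-dimensional) weight space.

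Two concrete slips. First, the truncation of the series on $\hat v_i\otimes\hat v_j$ is governed by the \emph{first} tensor factor: $E^n\hat v_i=\hat v_{i-n}$ vanishes for $n>i$, so the condition is $n\le i$, not $n\le j$ ($F$ never kills a vector of a generic Verma module). Second, your chain $(R\circ\tau)\circ\Delta(x)=R\circ\Delta^{\mathrm{op}}(x)\circ\tau=\Delta(x)\circ(R\circ\tau)$ silently uses the identity $R\,\Delta^{\mathrm{op}}(x)=\Delta(x)\,R$, which is \emph{not} the quasi-triangularity axiom but its image under the flip of $U\otimes U$; it is satisfied by $R_{21}$, not by $R$. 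The clean statement is that $\tau\circ R=R_{21}\circ\tau$ is the $U_q(sl(2))$-intertwiner $\hat V_{\lambda_1}\otimes\hat V_{\lambda_2}\to\hat V_{\lambda_2}\otimes\hat V_{\lambda_1}$. The paper's notation $R\circ\tau$ is the common loose shorthand for this braiding, but if you take the composition order literally your middle equality fails for $U_q(sl(2))$, where $R\ne R_{21}$; you should either prove the statement for $\tau\circ R$ or note explicitly that $\mathscr R_{\lambda_1,\lambda_2}$ is to be read as the braiding $R_{21}\circ\tau$.
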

\begin{proposition}
This leads to a braid group representation:
$$ \hat{\varphi}^{q,\lambda}_n: B_n \rightarrow \End_{U_q(sl(2))}\left( (\hat{V}_{\lambda})^{\otimes n}\right)$$ 
$$\sigma _i\longleftrightarrow Id_V^{(i-1)}\otimes \mathscr R_{\lambda,\lambda} \otimes Id_V^{(n-i-1)}.$$
\end{proposition}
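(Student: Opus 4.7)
The plan is to verify the standard construction of the braid-group representation from a quasi-triangular Hopf algebra, adapted to the setting of the universal Verma module. The two points to establish are (i) the operator $\mathscr R_{\lambda,\lambda}$ acts in a well-defined way and commutes with the diagonal $U_q(sl(2))$-action on $\hat V_{\lambda}^{\otimes n}$, and (ii) the local operators $\sigma_i \mapsto \mathrm{Id}^{\otimes(i-1)} \otimes \mathscr R_{\lambda,\lambda} \otimes \mathrm{Id}^{\otimes(n-i-1)}$ satisfy the braid relations.

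First I would address well-definedness. Although the universal $R$-matrix is an infinite sum, on any pure tensor $\hat v_i \otimes \hat v_j$ only the terms with $n \leq i$ contribute, since $E^n \hat v_i = 0$ for $n > i$. Hence $\mathscr R_{\lambda_1,\lambda_2}$ is a well-defined linear endomorphism of $\hat V_{\lambda_1} \otimes \hat V_{\lambda_2}$. Next, I would invoke the defining property of the universal $R$-matrix, namely $R\,\Delta(x) = \Delta^{\mathrm{op}}(x)\,R$ for all $x \in U_q(sl(2))$. Composing with the flip $\tau$ and using $\tau \,\Delta^{\mathrm{op}}(x) = \Delta(x)\,\tau$, one gets $\mathscr R_{\lambda,\lambda} \,\Delta(x) = \Delta(x)\,\mathscr R_{\lambda,\lambda}$, so $\mathscr R_{\lambda,\lambda}$ is a morphism of $U_q(sl(2))$-modules. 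The induced local operators on $\hat V_\lambda^{\otimes n}$ then commute with the full diagonal action, placing them in $\mathrm{End}_{U_q(sl(2))}(\hat V_\lambda^{\otimes n})$.

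Second, I would verify the braid relations. The far-commutativity $\sigma_i\sigma_j = \sigma_j \sigma_i$ for $|i-j|\geq 2$ is automatic because the two operators act on disjoint tensor slots. The nontrivial relation $\sigma_i \sigma_{i+1} \sigma_i = \sigma_{i+1} \sigma_i \sigma_{i+1}$ reduces, after localising on three consecutive tensor factors, to the Yang--Baxter equation for $\mathscr R_{\lambda,\lambda}$ on $\hat V_\lambda^{\otimes 3}$:
\[
(\mathscr R \otimes \mathrm{Id})(\mathrm{Id} \otimes \mathscr R)(\mathscr R \otimes \mathrm{Id}) = (\mathrm{Id} \otimes \mathscr R)(\mathscr R \otimes \mathrm{Id})(\mathrm{Id} \otimes \mathscr R).
\]
I would derive this from the quasi-triangular axioms $(\Delta \otimes \mathrm{id})(R) = R_{13}R_{23}$ and $(\mathrm{id} \otimes \Delta)(R) = R_{13}R_{12}$, combined once more with $R\,\Delta = \Delta^{\mathrm{op}}\,R$, exactly as in the standard derivation that a universal $R$-matrix gives a solution of the Yang--Baxter equation. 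The flip $\tau$ is absorbed in the usual way when passing from $R$ to $\mathscr R = R \circ \tau$.

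The only place requiring care is the convergence bookkeeping when transporting the quasi-triangular identities, which a priori hold in a suitable completion of $U_q(sl(2))^{\otimes 2}$, down to honest operator identities on $\hat V_\lambda^{\otimes 3}$. I expect this to be the main (though routine) obstacle: one checks that on any fixed basis vector $\hat v_{i_1}\otimes \hat v_{i_2}\otimes \hat v_{i_3}$ all the sums involved are finite (bounded by the $i_k$), so the infinite sums defining the two sides of Yang--Baxter reduce to equalities of polynomials in the $E^n$, $F^n$ and $q^{H\otimes H/2}$ operators that are already guaranteed by the Hopf-algebra identities above. Once the Yang--Baxter equation is established, assembling it with the far-commutativity and the intertwining property gives the desired representation $\hat\varphi^{q,\lambda}_n$.
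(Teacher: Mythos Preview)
Your proposal is correct and gives exactly the standard argument. The paper itself states this proposition without proof, treating it as a well-known consequence of the quasi-triangular Hopf algebra structure of $U_q(sl(2))$; your outline (well-definedness via locally finite sums, intertwining from $R\,\Delta=\Delta^{\mathrm{op}}R$, far-commutativity, and Yang--Baxter from the hexagon axioms) is precisely the standard derivation one would cite.
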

We will use a certain submodule of this Verma module, which leads to a different action of the quantum group, which will be useful regarding the correspondence with homological braid group representations. We discuss this in section \ref{4}
\begin{notation}
Let us define the vectors:
$$v_i:= [\lambda;i] \ \hat{v}_i \in \hat{V}_{\lambda}.$$
\end{notation}
\begin{definition}(Verma module $V^{q,\lambda}$ of $U_q(sl(2))$ )

Consider the following submodule generated by the normalised vectors from above:
$$V^{q,\lambda}:=<v_0,v_1,...>_{\C} \ \subseteq \hat{V}_{\lambda}.$$
As for the previous case, the $R$-matrix leads to a braid group representation, induced by the representation $\hat{\varphi}^{q,\lambda}_n$ :
$$ \varphi^{q,\lambda}_n: B_n \rightarrow \End_{U_q(sl(2))}\left( (V^{q,\lambda})^{\otimes n}\right).$$ 

\end{definition}
\begin{proposition}\label{P:actions}
The action of the quantum group on the Verma module $V^{\lambda,\xi}$ is the following:
\begin{align*}
Kv_i=&\qs^{\lambda-2i}v_i\\
Ev_i=&[\lambda+1-i]_q \  v_{i-1}\\
Fv_i =& [i+1]_q \ v_{i+1}.
\end{align*}
\end{proposition}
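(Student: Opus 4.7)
The plan is to derive all three formulas by a direct rescaling argument from the action on the universal Verma module $\hat{V}_\lambda$ given in the preceding definition. Since $v_i = [\lambda;i]\,\hat{v}_i$ is only a rescaling of the basis, I do not need to re-check any algebra relations; I only need to compute the image of $v_i$ under $K$, $E$, $F$. The entire content of the proposition reduces to identifying the correct form of the normalisation $[\lambda;i]$ so that the stated formulas hold, and then reading off the three actions.

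For $K$, which acts diagonally on $\hat{v}_i$ with eigenvalue $q^{\lambda-2i}$, the scalar $[\lambda;i]$ passes through freely, giving $Kv_i = q^{\lambda-2i}v_i$ immediately. For $F$, I would apply the definition to get $Fv_i = [\lambda;i]\,F\hat{v}_i = [\lambda;i]\,[i+1]_q\,[\lambda-i]_q\,\hat{v}_{i+1}$ and then re-express $\hat{v}_{i+1} = v_{i+1}/[\lambda;i+1]$. For the coefficient to collapse to $[i+1]_q v_{i+1}$, the normalisation must satisfy the one-step recursion $[\lambda;i+1] = [\lambda;i]\,[\lambda-i]_q$, which forces $[\lambda;i] = \prod_{j=0}^{i-1}[\lambda-j]_q$ (with $[\lambda;0]=1$). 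For $E$, the same recursion yields $Ev_i = [\lambda;i]\,\hat{v}_{i-1} = \bigl([\lambda;i]/[\lambda;i-1]\bigr)\,v_{i-1} = [\lambda-(i-1)]_q\,v_{i-1} = [\lambda+1-i]_q\,v_{i-1}$, as required.

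There is essentially no obstacle here; the proposition is a book-keeping statement whose real content is absorbed into the choice of normalisation $[\lambda;i]$. The point of the renormalisation, and the reason it is worth stating as a separate proposition, is that in the rescaled basis the $E$- and $F$-actions become polynomial in $\lambda$ with coefficients $[\lambda+1-i]_q$ and $[i+1]_q$ that are symmetric under $i \leftrightarrow \lambda+1-i$ and remain well-defined after specialisation at a root of unity. These are precisely the coefficients that will match the combinatorics of the homological generators in the Lawrence representation used in Section~\ref{4}.
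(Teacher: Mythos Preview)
Your proof is correct and complete. The paper states this proposition without proof, treating it as a routine rescaling computation; your derivation is exactly the natural one, and your identification of the normalisation $[\lambda;i]=\prod_{j=0}^{i-1}[\lambda-j]_q$ from the recursion $[\lambda;i+1]=[\lambda;i]\,[\lambda-i]_q$ is the intended meaning of the undefined symbol. Note only that the paper's displayed formula $F\hat v_i=[i+1]_q[\lambda-1]_q\,\hat v_{i+1}$ is a typo, as the very next remark (which evaluates it at $i=N-1$ as $[N]_q[\lambda-(N-1)]_q$) makes clear; you correctly used the intended $[\lambda-i]_q$.
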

\begin{remark}

For natural parameters $\lambda=N-1 \in \N$, $V^{q,N-1}$ is the standard $N$-dimensional representation of $U_q(sl(2))$.

Otherwise, for non-natural cases $\lambda \in \C \setminus \N$, this module is the same as the generic Verma module:
$$V^{q,\lambda}\simeq\hat{V}_{\lambda}.$$
\end{remark}

\subsection{The quantum group at roots of unity}
Let $N\in \mathbb N$ and $q=\xi=e^{\frac{2 \pi i}{2N}}$ a $2N^{th}$-root of unity.
We will denote by $U_{\xi}(sl(2))$ the quantum group corresponding to $q=\xi$.
\begin{definition}(Finite dimensional representations of $U_{\xi}(sl(2))$)\\
Let $\lambda \in \C$ generic with respect to $\xi$. Then we denote by:
$$U_N^{\lambda}:=V_N^{\xi,\lambda}\subseteq \hat{V}_{\lambda}$$ 
\end{definition}
\begin{lemma}
The braid group action $\hat{\varphi}^{\xi,\lambda}_n$ preserves the submodule $U_N^{\lambda}$, and leads to a braid group representation:
$$ \varphi^{\xi,\lambda}_n: B_n \rightarrow End_{U_{\xi}(sl(2))}(U_N^{\lambda}{^{\otimes n}})$$ 
$$\sigma _i\longleftrightarrow Id_V^{(i-1)}\otimes \mathscr R_{\lambda,\lambda} \otimes Id_V^{(n-i-1)}.$$
\end{lemma}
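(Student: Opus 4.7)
The plan is to decompose the argument into three logical steps: confirm that $U_N^{\lambda}$ is a $U_{\xi}(sl(2))$-submodule, show that the $R$-matrix makes sense and preserves $U_N^{\lambda}\otimes U_N^{\lambda}$ at $q=\xi$, and then extract the braid relations by a specialisation argument.

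First, I would use the formulas of Proposition \ref{P:actions} to check that $K$, $E$ and $F$ stabilise $\langle v_0,\dots,v_{N-1}\rangle$. The action of $K$ is diagonal and the action $Ev_i=[\lambda+1-i]_{\xi}\,v_{i-1}$ manifestly stays inside $U_N^{\lambda}$ (with $Ev_0=0$). The only nontrivial point is the top of the module: $Fv_{N-1}=[N]_{\xi}\,v_N=0$, because $[N]_{\xi}=0$ at the primitive $2N$-th root of unity. Hence $U_N^{\lambda}\subseteq\hat V_{\lambda}$ is stable under $K$, $E$, $F$ and defines a genuine $U_{\xi}(sl(2))$-submodule.

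Second, I would analyse the restriction of $\mathscr R_{\lambda,\lambda}=R\circ\tau$ to $U_N^{\lambda}\otimes U_N^{\lambda}$. The apparent danger in the series $R=q^{H\otimes H/2}\sum_{n\ge 0}q^{n(n-1)/2}\frac{\{1\}^{2n}}{\{n\}_q!}\,E^n\otimes F^n$ is the factor $\{n\}_{\xi}!=0$ for $n\ge N$. However, iterating the $E$-action gives $E^n v_i=0$ for $n>i$, while iterating the $F$-action produces a factor $[N]_{\xi}=0$ in the coefficient of $F^n v_j$ as soon as $j+n\ge N$. Therefore on any basis tensor $v_i\otimes v_j$ with $0\le i,j\le N-1$ the sum truncates to $n\le\min(i,N-1-j)\le N-1$, a range on which all the denominators $\{n\}_{\xi}!$ are nonzero. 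By step one each surviving term $E^n\otimes F^n$ preserves $U_N^{\lambda}\otimes U_N^{\lambda}$, and the diagonal factor $q^{H\otimes H/2}$ is plainly well-defined on pure tensors; thus $\mathscr R_{\lambda,\lambda}\in\End_{U_{\xi}(sl(2))}(U_N^{\lambda}\otimes U_N^{\lambda})$.

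Finally, the braid relations follow by specialisation. The Yang--Baxter identity holds at generic $q$ for $\mathscr R_{\lambda,\lambda}$ on $\hat V_{\lambda}^{\otimes 3}$; when tested on basis vectors indexed by $\{0,\dots,N-1\}^3$, the matrix coefficients of both sides are rational functions in $q$ built out of the ingredients of step two, and are therefore regular at $q=\xi$. Hence the identity persists after specialisation to $q=\xi$ on $(U_N^{\lambda})^{\otimes 3}$, and the assembled generators $\sigma_i\mapsto\mathrm{Id}^{(i-1)}\otimes\mathscr R_{\lambda,\lambda}\otimes\mathrm{Id}^{(n-i-1)}$ define the required representation $\varphi_n^{\xi,\lambda}\colon B_n\to\End_{U_{\xi}(sl(2))}\!\bigl((U_N^{\lambda})^{\otimes n}\bigr)$.

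The main technical delicacy is step two: making honest sense of the $R$-matrix series at $q=\xi$ despite the $0/0$ interplay between its scalar coefficients and its operator factors. Once one observes that the series truncates automatically on the submodule so that only regular coefficients appear, the stability under each braid generator and the propagation of the braid relations from the generic setting become essentially formal.
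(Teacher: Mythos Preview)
The paper states this lemma without proof, treating it as a standard fact from the representation theory of $U_\xi(sl(2))$ at roots of unity. Your three-step argument correctly supplies the details and is essentially the standard justification.

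One small sharpening you might make in step~2/step~3: rather than arguing that the series ``truncates so only regular coefficients appear'', it is cleaner to observe that for every $n\ge 0$ the scalar-times-operator combination
\[
\frac{\{1\}^{2n}}{\{n\}_q!}\,F^n v_j \;=\; (q-q^{-1})^n\binom{j+n}{n}_q\, v_{j+n}
\]
has a Gaussian binomial as coefficient, which is a Laurent polynomial in $q$. Hence \emph{every} matrix entry of $\mathscr R_{\lambda,\lambda}$ on $\hat V_\lambda^{\otimes 2}$ is a Laurent polynomial in $q$ and $q^\lambda$, with no poles anywhere. This makes step~3 completely formal: the Yang--Baxter equation on $\hat V_\lambda^{\otimes 3}$ is then an identity of Laurent polynomials, so it specialises to $q=\xi$ on the nose; combined with your step~2 showing that $\mathscr R_{\lambda,\lambda}$ at $q=\xi$ preserves $(U_N^\lambda)^{\otimes 2}$, restriction commutes with composition and the braid relations descend to $(U_N^\lambda)^{\otimes n}$. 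Without this polynomiality remark, your phrase ``the matrix coefficients of both sides\dots are therefore regular at $q=\xi$'' needs a word about why intermediate states with indices $\ge N$ (which do occur at generic $q$) cause no trouble.
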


\begin{proposition}
Let us consider the following dualities in the category of finite dimensional representations of $U_\xi(sl(2))$:
The dualities of this category have the following form:
$$\forall \lambda \in \C \setminus \N$$ 
\begin{align}\label{E:DualityForCat}
\tcoev_{\lambda} :\, & \C \rightarrow U_N^{\lambda}\otimes {U_N^{\lambda}}^{*} \text{ is given by } 1 \mapsto
  \sum v_j\otimes v_j^*,\notag
  \\
 \tev_{\lambda}:\, & {U_N^{\lambda}}^*\otimes U_N^{\lambda}\rightarrow \C\text{ is given by } f\otimes w
  \mapsto f(w),\notag
  \\
\coev_{\lambda}:\, & \C \rightarrow {U_N^{\lambda}}^*\otimes U_N^{\lambda} \text{ is given by } 1 \mapsto \sum
   v_j^*\otimes K^{N-1}v_j,
  \\
\ev_{\lambda}:\, & U_N^{\lambda}\otimes {U_N^{\lambda}}^{*}\rightarrow \C \text{ is given by } v\otimes f
  \mapsto  f(K^{1-N}v),\notag
\end{align}
for $\{v_j\}$ a basis of $U_N^{\lambda}$ and $\{v_j^*\}$ the dual basis
of ${U_N^{\lambda}}^*$.\\
\end{proposition}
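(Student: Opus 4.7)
The plan is to check that each of the four morphisms is $U_\xi(sl(2))$-equivariant and that the pairs $(\tcoev_\lambda,\tev_\lambda)$ and $(\coev_\lambda,\ev_\lambda)$ satisfy the corresponding snake (zig-zag) identities. The first pair realises the canonical left duality obtained from the Hopf-algebraic structure of $U_\xi(sl(2))$, while the second is a right duality twisted by the grouplike element $K^{N-1}$, which plays the role of a pivotal element in this category.

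For the left duality, the dual module ${U_N^{\lambda}}^*$ carries the standard contragredient action $(x\cdot f)(w)=f(S(x)w)$. Equivariance of $\tev_\lambda$ is the identity $\sum f(S(x_{(1)})x_{(2)}w)=\epsilon(x)f(w)$, which is the antipode axiom applied inside $f$. Equivariance of $\tcoev_\lambda$ is checked analogously. Once these are established, the snake identities reduce, upon evaluating on the dual bases $\{v_j\},\{v_j^*\}$, to the tautology $\sum_j v_j^*(w)v_j=w$ in $U_N^{\lambda}$, and its mirror in ${U_N^{\lambda}}^*$.

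For the right duality, the essential observation is that at the chosen root of unity $\xi$ the element $K^{N}$ is central in $U_\xi(sl(2))$, because the commutation relations give $K^{N}E=\xi^{2N}EK^{N}=EK^{N}$ and similarly for $F$. Combined with $S^{2}(x)=KxK^{-1}$, this means that conjugation by $K^{N-1}$ induces $S^{\pm 2}$ up to a central scalar, so that $K^{N-1}$ is a valid pivotal element on the finite-dimensional modules $U_N^{\lambda}$. To verify equivariance of $\ev_\lambda$ one computes, for $\Delta(x)=\sum x_{(1)}\otimes x_{(2)}$,
$$\ev_\lambda(x\cdot(v\otimes f))=\sum f\bigl(S(x_{(2)})K^{1-N}x_{(1)}v\bigr),$$
and reduces this to $\epsilon(x)f(K^{1-N}v)$ by centrality of $K^N$ together with the antipode axiom; the generators $E,F,K$ can be checked directly. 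Equivariance of $\coev_\lambda$ is dual. The remaining snake identities reduce, via Proposition \ref{P:actions}, to checks on weight vectors where $K$ acts diagonally, and the $K^{\pm(N-1)}$ factors on dual pairs cancel.

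The main obstacle is not the individual verifications, which are routine once conventions are fixed, but rather correctly identifying $K^{N-1}$ (as opposed to, say, $K^{-1}$) as the pivotal element. This choice is legitimate precisely because $K^{N}$ is central at the root of unity, and it is the one that matches the normalisation of the renormalised Reshetikhin--Turaev construction producing the coloured Alexander invariant $\Phi_N(L,\lambda)$ used in Theorem \ref{THEOREM}. A different choice of pivot would rescale the invariant by a scalar depending on $\lambda$ and $N$, and would break the topological identity stated there.
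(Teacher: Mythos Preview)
The paper does not actually prove this proposition; it is stated as a standing fact about the pivotal structure on finite-dimensional $U_\xi(sl(2))$-modules at the $2N^{\text{th}}$ root of unity, and the formulas \eqref{E:DualityForCat} are simply used as input to the Reshetikhin--Turaev construction in Section~\ref{2}. So there is nothing to compare against: you are supplying an argument where the paper supplies none.

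Your outline is correct and identifies the right mechanism. The left duality $(\tcoev_\lambda,\tev_\lambda)$ is the standard Hopf-algebraic one, and the verifications you sketch (antipode axiom inside the pairing, dual-basis tautology for the snakes) are exactly what is required. For the right duality $(\coev_\lambda,\ev_\lambda)$ the key point is indeed the centrality of $K^{N}$ at $q=\xi$: a direct check shows that equivariance of $\ev_\lambda$ on the generator $E$ reduces to $f(K^{-N}Ev)-f(EK^{-N}v)=0$, which vanishes precisely because $K^{-N}E=\xi^{-2N}EK^{-N}=EK^{-N}$. One small imprecision: $K^{N-1}$ does not implement $S^{2}$ but rather $S^{-2}$ (since $K^{N-1}EK^{1-N}=\xi^{2(N-1)}E=\xi^{-2}E$ while $S^{2}(E)=\xi^{2}E$); this is harmless because it is exactly the condition needed for $\ev_\lambda(v\otimes f)=f(K^{1-N}v)$ to be a module map, but your phrase ``induces $S^{\pm 2}$ up to a central scalar'' is a bit loose. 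Your closing remark that this particular pivot is the one matching the normalisation in Theorem~\ref{THEOREM} is well taken and is the real reason the paper records these specific formulas.
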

\begin{definition} (Category of coloured oriented tangles)\label{tan}

Let $\cat$ be a category. 
Then, let $\mathscr T_\cat$ to be the category of $\cat$-colored framed tangles, defined in the following manner:
$$Ob( \mathscr T_\cat)= \{\mathbb{V}=\big( (V_1,\epsilon_1),...,(V_m,\epsilon_m) \big) \mid m\in \N,\epsilon_i \in \{ \pm 1 \},V_i \in  \cat \}.$$ 
$$\Hom_{ \mathscr T_\cat}\left(\mathbb{V},\mathbb{W}) \right) = \{ \cat- \text{colored framed tangles} \ \mathscr T : \mathbb{V} \nearrow \mathbb W \} / \text{isotopy}$$
$$\text{ for any } \mathbb{V}=(V_1,\epsilon_1),...,(V_m,\epsilon_m); \ \mathbb{W}=(W_1,\delta_1),...,(W_n,\delta_n) \in Ob(T_{\cat}).$$
Remark: We look at morphisms $T \in \Hom_{ \mathscr T_\cat}\left(\mathbb{V},\mathbb{W} \right) $ in the category $\mathscr C$ as being represented geometrically by tangles which go from the bottom part, coloured with $\mathbb V$ to the top part, coloured with $\mathbb W$. 
The colours on the strands of the tangle  $T$ have to agree with the colors at its boundaries $\mathbb V$ and $\mathbb W$.
Once we have such a tangle, it has an induced orientation, coming from the signs $\epsilon_i$, using the following conventions:
$$(V,-) \ \downarrow,  \ \ \ (V,+) \ \uparrow.$$
\end{definition}

\begin{theorem}(Reshetikhin-Turaev)\\
There exist an unique monoidal functor $$\F: \mathscr T_{Rep^{f. dim}(U_\xi(sl(2)))}\rightarrow Rep^{f. dim}(U_\xi(sl(2)))$$
such that $\forall \ U_N^{\lambda_1},U_N^{\lambda_2} \in Rep^{f. dim}(U_\xi(sl(2)))$, it respects the following local relations:
$$1) \ \F((U_N^{\lambda_1},+))=U_N^{\lambda_1}; \ \ \ \F((U_N^{\lambda_1},-))=(U_N^{\lambda_1})^{*} \ \ \ \ \ \ \ \ \ \ \ \ $$ 
\begin{align*} 
2) \ \F(
\tikz[x=1mm,y=1mm,baseline=0.5ex]{\draw[->] (3,3)--(0,0); \draw[line width=3pt,white] (0,3)--(3,0); \draw[->] (0,3)--(3,0);}
) &=  R_{{\lambda_1},{\lambda_2}} \in Hom_{U_q(sl(2))}(U_N^{\lambda_1}\otimes U_N^{\lambda_2}\rightarrow U_N^{\lambda_2} \otimes U_N^{\lambda_1}) \\
\F(
\tikz[x=1mm,y=1mm,baseline=0.5ex]{\draw[<-] (0,3) .. controls (0,0) and (3,0) .. (3,3); \draw[draw=none, use as bounding box](-0.5,0) rectangle (3,3);}
) &= \tcoev_{{\lambda_1}} :\C\rightarrow U_N^{\lambda_1} \otimes (U_N^{\lambda_1})^{*} \\
\F(
\tikz[x=1mm,y=1mm,baseline=0.5ex]{\draw[<-] (3,0) .. controls (3,3) and (0,3) .. (0,0); \draw[draw=none, use as bounding box](0,0) rectangle (3.5,3);}
) &= \ev_{{\lambda_1}} :U_N^{\lambda_1} \otimes (U_N^{\lambda_1})^{*} \rightarrow \mathbb \C \\
\F(
\tikz[x=1mm,y=1mm,baseline=0.5ex]{\draw[->] (0,3) .. controls (0,0) and (3,0) .. (3,3); \draw[draw=none, use as bounding box](-0.5,0) rectangle (3,3);}
) &= \coev_{{\lambda_1}} :\C \rightarrow U_N^{\lambda_1} \otimes (U_N^{\lambda_1})^{*} \\
\F(
\tikz[x=1mm,y=1mm,baseline=0.5ex]{\draw[->] (3,0) .. controls (3,3) and (0,3) .. (0,0); \draw[draw=none, use as bounding box](0,0) rectangle (3.5,3);}
) &= \tev_{{\lambda_1}} :U_N^{\lambda_1} \otimes (U_N^{\lambda_1})^{*} \rightarrow \mathbb \C.
\end{align*}
\end{theorem}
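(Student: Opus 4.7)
The plan is to invoke the standard generators-and-relations presentation for the category $\mathscr T_\cat$ of coloured framed tangles. By a theorem of Turaev (which applies equally well when morphisms are allowed to be coloured by objects of a pivotal/braided category), every morphism in $\mathscr T_{Rep^{f.dim}(U_\xi(sl(2)))}$ can be written, via its Morse projection, as a composition of elementary building blocks: positive and negative crossings, and the four (co)evaluation cups/caps, tensored with identities. Moreover two such decompositions yield isotopic framed tangles if and only if one can pass between them using a finite list of local moves (the framed Reidemeister moves $R_{II}$, $R_{III}$, the framed $R_I$ which changes framing, together with the zigzag moves and the sliding/braiding-duality compatibility moves). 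Thus defining a monoidal functor $\F$ is equivalent to prescribing its values on the elementary tangles and checking that the prescribed values respect these local relations.

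Uniqueness is then essentially automatic: the statement fixes $\F$ on objects and on every elementary coloured tangle, so any two functors satisfying (1) and (2) must coincide on the generating set and therefore, since they are both monoidal, on all of $\mathscr T_\cat$. For existence, I would first define $\F$ on objects by (1), on morphisms by (2), extend by monoidality and composition, and then verify the relations one by one. The zigzag (snake) identities are a direct computation using the formulas for $\tcoev_\lambda, \tev_\lambda, \coev_\lambda, \ev_\lambda$ in terms of the basis $\{v_j\}$ and dual basis $\{v_j^*\}$; the key point is that the $K^{N-1}$ insertions in $\coev_\lambda$ and $\ev_\lambda$ cancel precisely because they arise from the pivotal element of $U_\xi(sl(2))$. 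Reidemeister $R_{II}$ for the crossing $\F(\sigma)=\mathscr R_{\lambda_1,\lambda_2}$ follows from invertibility of the universal $R$-matrix, while $R_{III}$ follows from the Yang-Baxter equation, which is built into the quasi-triangular structure already used in Proposition establishing the braid representation $\varphi^{\xi,\lambda}_n$. The framed $R_I$ move translates into the statement that a positive curl acts by the ribbon element of $U_\xi(sl(2))$, which lives in $Rep^{f.dim}(U_\xi(sl(2)))$ by the general theory of ribbon Hopf algebras.

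The main obstacle I expect is verifying the compatibility moves between a crossing and a cup/cap (the "sliding" moves that allow one to push a strand past a cup or a cap). These encode exactly the axiom that the braiding is compatible with the duality, equivalently that the category is ribbon; concretely one must check identities of the form $(\text{id}\otimes \tev_\lambda)\circ(\mathscr R_{\lambda,\lambda}\otimes \text{id})\circ(\text{id}\otimes \tcoev_\lambda)=\theta_\lambda$, where $\theta_\lambda$ is the twist. This requires knowing that $U_\xi(sl(2))$ carries a ribbon element compatible with the evaluation morphisms defined in \eqref{E:DualityForCat}, which at a $2N$-th root of unity is a subtle point because the Verma quotient $U_N^\lambda$ is not the standard irreducible module but a $\lambda$-deformation of it. I would handle this by importing the ribbon structure on the unrolled quantum group $U_q^H(sl(2))$ (or equivalently the family of weight-graded ribbon structures on generic Verma modules), verifying the sliding moves at generic $(q,\lambda)$ where both sides are rational in the parameters, and then specialising to $q=\xi$ using continuity in $\lambda\in \C\setminus \Z$. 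Once all Reidemeister and zigzag relations are checked at this generic level, specialisation yields the desired functor into $Rep^{f.dim}(U_\xi(sl(2)))$, completing the proof.
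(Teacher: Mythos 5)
The paper does not prove this statement: it is quoted as the classical Reshetikhin--Turaev theorem (in the variant adapted to the modules $U_N^{\lambda}$ of $U_\xi(sl(2))$, following \cite{ADO} and \cite{Ito2}), so there is no in-paper argument to compare yours against. Your outline is the standard and correct one: present the tangle category by generators (crossings, cups, caps) and relations (framed Reidemeister moves, zigzag identities, sliding moves), read off uniqueness from the prescribed values on generators together with monoidality, and reduce existence to checking the relations, with $R_{II}$ and $R_{III}$ following from quasi-triangularity and the zigzag identities from the explicit $K^{N-1}$-twisted dualities of \ref{E:DualityForCat}.

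One step as written would not go through literally. You propose to verify the sliding and ribbon compatibilities ``at generic $(q,\lambda)$ \dots and then specialise to $q=\xi$''. But the $N$-dimensional space $U_N^{\lambda}=\langle v_0,\dots,v_{N-1}\rangle$ is a $U_q(sl(2))$-submodule of the Verma module only when $[N]_q[\lambda-N+1]_q=0$; for generic $q$ and $\lambda\in\C\setminus\N$ it is not closed under $F$, so the identities you want to check do not make sense there for this module. The correct genericity argument keeps $q=\xi$ fixed and treats $s=\xi^{\lambda}$ as the free parameter (all structure constants of the braiding and the dualities lie in $\Z[q^{\pm 1},q^{\pm\lambda}]$, exactly the ring the paper works over, so an identity valid for generic $\lambda$ holds for all $\lambda\in\C\setminus\Z$), or else works in the restricted unrolled quantum group where $E^N=F^N=0$ is imposed so that the truncation survives for all parameters. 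With that repair, together with the observation that the infinite sum defining the universal $R$-matrix truncates on $U_N^{\lambda_1}\otimes U_N^{\lambda_2}$ because $E^N$ acts as zero on $U_N^{\lambda}$, your argument is complete.
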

\section{Quantum representation of the braid group}\label{section3}
In this section, we will introduce certain important subspaces in the tensor power of $U_q(sl(2))$-representations. They are called highest weight spaces and carry a braid group action, called quantum representation. They have a rich structure, and we will see in later sections that they are related to homological braid group representations.

\clearpage

\subsection{(Highest) weight spaces}
\begin{center}
\begin{tabu} to 1.02\textwidth { | X[c] | X[c] | X[c] | X[c] | } 
 \hline
Quantum group:&   $U_q(sl(2))$   &   $U_q(sl(2))$  &  $ U_{\xi}(sl(2))$\\
\hline
&Universal Verma module    &       Verma module    &        N-dimensional representation\\
\hline
& $\hat{V}_{\lambda}$  &   $V^{q,\lambda}$ &  $ U_N^{\lambda}=V^{\lambda,\xi} $\\
\hline
Braid group action  & $\hat{\varphi}^{q,\lambda}_{n}$ &  $\varphi^{q,\lambda}_{n}$ &    $\varphi^{\xi,\lambda}_{n}$\\

\hline
Weight spaces    & $\hat{V}^{q,\lambda}_{n,m}$ &  $V^{q,\lambda}_{n,m}$ &  $V^{\xi,\lambda}_{n,m}$\\
\hline
Highest weight spaces & $\hat{W}^{q,\lambda}_{n,m}$ &  $W^{q,\lambda}_{n,m}$  & $W^{\xi,\lambda}$ \\
\hline
Braid group action (quantum representation) & &  $\varphi^{q,\lambda}_{n,m}$ &    $\varphi^{\xi,\lambda}_{n,m}$\\
\hline
\end{tabu}
\end{center}
                                                   
\begin{notation}
For $n,m \in \N$, we consider the following sets of partitions:
$$E_{n,m}=\{e=(e_1,...,e_{n-1})\in \N^n \mid e_1+...+e_{n-1}=m \}$$
$$E^{N}_{n,m}=\{e \in E_{n,m} \mid e_i\leq N-1, \ \forall i \in \overline{1,n-1} \ \}.$$
It is known that:
$$\text{card} (E_{n,m})={n+m-2 \choose m}=^{not}d_{n,m}.$$
We denote by
$$\text{card} (E^N_{n,m})=^{not}d^N_{n,m}.$$
\end{notation}

\begin{definition}Let us consider the parameters $n,m \in \N$.

{\bf I.Generic (highest) weight spaces:}
$$\text{Generic weight space} \ \ \ \ \ \ \ \ \ \ \hat{V}^{q,\lambda}_{n,m}:=Ker(K-q^{n\lambda-2m}Id) \ \subseteq \hat{V}_{\lambda}^{\otimes n}. \ \ \ \ \ $$ 
$$\text{Generic highest weight space} \ \ \hat{W}^{q,\lambda}_{n,m}:= \ \hat{V}^{q,\lambda}_{n,m} \ \ \cap \ Ker E \ \ \  \subseteq \ \hat{V}_{\lambda}^{\otimes n}. \ \ \ \ \ $$ 
{\bf II. (Highest) weight spaces:}
$$\text{Weight space} \ \ \ \ \ \ \ \ \ \ V^{q,\lambda}_{n,m}:=Ker(K-q^{n\lambda-2m}Id) \ \subseteq (V^{q,\lambda})^{\otimes n}. \ \ \ \ \ $$ 
$$\text{Highest weight space} \ \ W^{q,\lambda}_{n,m}:= \ V^{q,\lambda}_{n,m} \ \ \cap \ Ker E \ \ \  \subseteq \ (V^{q,\lambda})^{\otimes n}. \ \ \ \ \ $$ 
\end{definition}
\begin{remark}(Basis for the weight spaces)\label{R:basisweight}
From the formulas for the $K$-action, one gets a natural basis for weight spaces:
$$\hat{V}^{q,\lambda}_{n,m}=<\hat{v}_{e_1}\otimes ... \otimes \hat{v}_{e_{n-1}}\mid \ e=(e_1,...,e_{n-1}) \in E_{n,m}>_{\C}.$$
$$V^{q,\lambda}_{n,m}=<v_{e_1}\otimes ... \otimes v_{e_{n-1}}\mid \ e=(e_1,...,e_{n-1}) \in E_{n,m}>_{\C}.$$
\end{remark}

\subsection{Basis for the highest weight spaces}

We have seen above that the weight spaces have natural bases given by elements parametrised by certain partitions of integers. In contrast to this case, it is more subtle to describe bases for highest weight spaces. In the following part, we will present such bases following \cite{JK} and \cite{Ito}.
\begin{definition} (Basis for $W^{q,\lambda}_{n,m}$)

For $e \in E_{n+1,m}$, consider the following vector:
$$\bar{v}^{q,\lambda}_e:= q^{\lambda \sum_{i=1}^ni e_i}v_{e_1}\otimes... \otimes v_{e_n}.$$
Then, the set $\mathscr B_{V^{q,\lambda}_{n,m}}:=\{ \bar{v}^{q,\lambda}_e | e \in E_{n+1,m} \}$ gives a basis for the weight space $V^{q,\lambda}_{n,m}$.
\end{definition}
Further on, the highest weight space $W^{q,\lambda}_{n,m}$ will be identified with a certain subspace inside the weight spaces $V^{q,\lambda}_{n,m}$.

Let us consider the inclusion $\iota: E_{n,m} \rightarrow E_{n+1,m}$ defined by:
$$\iota ((e_1,...,e_{n-1}))=(0,e_1,...,e_{n-1}).$$
Denote the subspace $$\bar{V}^{q,\lambda}_{n,m}:=\C v_0 \oplus V^{q,\lambda}_{n-1,m}\subseteq V^{q,\lambda}_{n,m}.$$ 
Then, $\mathscr B_{\bar{V}^{q,\lambda}_{n,m}}:= \{ \bar{v}^{q,\lambda}_{\iota(e)}| e \in E_{n,m} \}$ will give a basis for the space $\bar{V}^{q,\lambda}_{n,m}$.
\begin{proposition}(\cite{Ito}\label{P:bqu})

Define the following function $\phi_{q,\lambda}: \bar{V}^{q,\lambda}_{n,m}\rightarrow W^{q,\lambda}_{n,m}$ by the formula:
$$\phi_{q,\lambda}(w):=\sum_{k=0}^{m} (-1)^k q^{-k(n-1)\lambda} q^{2mk-k(k+1)} v_k \otimes E^k(w).$$

If $\lambda$ is generic with respect to $q$,  then $\phi_{q,\lambda}$ is an isomorphism of $\C$-vector spaces.

The set $\mathscr B_{W^{q,\lambda}_{n,m}}=\{ \phi_{q,\lambda}(\bar{v}^{q,\lambda}_{\iota(e)})| e \in E_{n,m}\}$ gives a basis for the highest weight space  $W^{q,\lambda}_{n,m}$.
It follows that:
$$\text{ dim} \ (W^{q,\lambda}_{n,m} )=d_{n,m}={n+m-2 \choose m}.$$
\end{proposition}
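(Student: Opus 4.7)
The plan is to establish the three assertions in order --- the image of $\phi_{q,\lambda}$ lands in $W^{q,\lambda}_{n,m}$, the map is injective, and the map is an isomorphism --- following a classical pattern for highest-weight projectors. The coefficients in the formula are designed precisely to force the first claim, after which injectivity and surjectivity are comparatively formal.

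First, to verify $E\,\phi_{q,\lambda}(w)=0$ I would apply the coproduct $\Delta(E) = E \otimes K + 1 \otimes E$ to each summand. Using $E v_k = [\lambda+1-k]_q\, v_{k-1}$ from Proposition \ref{P:actions} together with the observation that $E^k(w)$ lies in the weight subspace of $(V^{q,\lambda})^{\otimes(n-1)}$ of weight $(n-1)\lambda - 2(m-k)$ (so that $K$ acts on it by $q^{(n-1)\lambda - 2(m-k)}$), each summand expands as
$$
E\bigl(v_k \otimes E^k(w)\bigr) \;=\; [\lambda+1-k]_q \, q^{(n-1)\lambda - 2(m-k)}\, v_{k-1} \otimes E^k(w) \;+\; v_k \otimes E^{k+1}(w).
$$
Summing against the coefficients of $\phi_{q,\lambda}$ and reindexing the second piece via $k \mapsto k-1$, the resulting contributions to $v_{k-1} \otimes E^k(w)$ telescope: the specific exponents $-k(n-1)\lambda$ and $2mk - k(k+1)$ are chosen so that adjacent terms cancel for every $1 \leq k \leq m$. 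The boundary terms vanish automatically since $v_{-1}=0$ and $E^{m+1}(w) = 0$ (as $w$ has weight $(n-1)\lambda - 2m$ in the $(n-1)$-fold tensor). This telescoping is the technical heart of the proof.

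Second, injectivity follows from a leading-term argument: among the $m+1$ summands of $\phi_{q,\lambda}(w)$, only the $k=0$ term has first tensor factor $v_0$, while every other has first factor $v_k$ with $k \geq 1$. Thus the projection $\pi\colon V^{q,\lambda}_{n,m} \to \bar V^{q,\lambda}_{n,m}$ onto the $v_0$-leading subspace satisfies $\pi \circ \phi_{q,\lambda} = \mathrm{Id}$, giving both injectivity and linear independence of $\{\phi_{q,\lambda}(\bar v^{q,\lambda}_{\iota(e)})\}_{e \in E_{n,m}}$ inside $W^{q,\lambda}_{n,m}$. For surjectivity and the dimension formula, I would compute $\dim W^{q,\lambda}_{n,m}$ independently: for generic $\lambda$ the map $E \colon V^{q,\lambda}_{n,m} \to V^{q,\lambda}_{n,m-1}$ is surjective (by the standard decomposition of tensor products of generic Verma modules of $U_q(sl(2))$, which mirrors the classical $U(sl(2))$-picture), whence $\dim W^{q,\lambda}_{n,m} = \dim V^{q,\lambda}_{n,m} - \dim V^{q,\lambda}_{n,m-1} = d_{n,m}$. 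Combined with injectivity and $\dim \bar V^{q,\lambda}_{n,m} = d_{n,m}$, this forces $\phi_{q,\lambda}$ to be an isomorphism.

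The main obstacle is executing the first step cleanly: the $q$-exponent arithmetic in the telescoping identity is delicate, and the cancellation must be written carefully using the weight of $E^k(w)$ (which enters through the $K$-factor of the coproduct) rather than the weight of $w$ itself. Once this identity is verified, injectivity is transparent from the leading term and the dimension count is a standard application of the generic representation theory of $U_q(sl(2))$-Verma modules; genericity of $\lambda$ enters precisely to ensure nonvanishing of the $[\lambda - j]_q$ and hence maximal rank of $E$ along the weight ladder.
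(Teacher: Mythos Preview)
The paper does not supply its own proof of this proposition; it is stated with a citation to \cite{Ito} and used as input. So there is no argument in the paper to compare against, and your outline is effectively a reconstruction of the (standard) proof one would find in the cited reference.

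Your strategy is the right one and is essentially how such projection-onto-highest-weight statements are always proved: (i) a telescoping computation shows $E\phi_{q,\lambda}(w)=0$ using $\Delta(E)=E\otimes K + 1\otimes E$, the $E$-action from Proposition~\ref{P:actions}, and the $K$-weight of $E^k(w)$; (ii) the $k=0$ summand is $v_0\otimes w$, so projection onto the $v_0$-leading part gives a left inverse, hence injectivity; (iii) surjectivity follows from the dimension count $\dim W^{q,\lambda}_{n,m}=\dim V^{q,\lambda}_{n,m}-\dim V^{q,\lambda}_{n,m-1}$, which in turn comes from surjectivity of $E$ on weight spaces for generic $\lambda$. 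One small remark on (iii): rather than invoking the full tensor decomposition of generic Verma modules, it is slightly more self-contained to argue that $F\colon V^{q,\lambda}_{n,m-1}\to V^{q,\lambda}_{n,m}$ is injective (clear on the monomial basis since $[i+1]_q\neq 0$ for $i<m$ at generic $q$), and then use the commutator $[E,F]=\tfrac{K-K^{-1}}{q-q^{-1}}$, which acts by the nonzero scalar $[n\lambda-2(m-1)]_q$ on $V^{q,\lambda}_{n,m-1}$ for generic $\lambda$, to conclude $E$ is surjective. This avoids appealing to any external structure theory.
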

\subsection{Generic quantum representation}
\

\begin{definition}(Braid group action)

Since the braid group representation $\varphi^{q,\lambda}_{n}$ commutes with the quantum group action, it leads to well defined representations on the (generic) highest weight spaces.
$$ \hat{\varphi}^{q,\lambda}_{n,m}:B_n\rightarrow End(\hat{W}^{q,\lambda}_{n,m}). $$
$$ \varphi^{q,\lambda}_{n,m}:B_n\rightarrow End(W^{q,\lambda}_{n,m}).$$
\end{definition}
\begin{definition} (Quantum representation)

Suppose $\lambda,q \in \C$ such that $\lambda$ is generic with respect to $q$. Then, the braid group representation $\varphi^{q,\lambda}_{n,m}$ written in the basis $\mathscr B_{W^{q,\lambda}_{n,m}}$ is called the quantum representation of the braid group:
$$\varphi^{q,\lambda}_{n,m}:B_n \rightarrow Aut(W^{q,\lambda}_{n,m}, B_{W^{q,\lambda}_{n,m}})\simeq GL(d_{n,m}, \C).$$

\end{definition}
\subsection{Highest weight spaces at roots of unity}
Let $N \in \N$ to be the level and $\xi=e^{\frac{2 \pi i}{2N}}$ be a $2N$-root of unity. We use the following subspaces.
\begin{definition} 
Let $n,m \in \N$ two parameters and $\lambda \in \C\setminus \N$. Consider the following subspaces in the tensor power of the representation of $U_{\xi}(sl(2))$:
$$\text{Weight space at roots of unity} \ \ \ \ \ \ \ \ \ \ V^{\xi,\lambda}_{n,m}:=Ker(K-\xi^{n\lambda-2m}Id )\ \subseteq (U^{\lambda}_N)^{\otimes n}. \ \ \ \ \ $$ 
$$\text{Highest weight space at roots of unity} \ \ W^{\xi,\lambda}_{n,m}:= \ V^{\xi,\lambda}_{n,m} \ \ \cap \ Ker E \ \ \  \subseteq \ ((U_N^{\lambda})^{\otimes n}. \ \ \ \ \ $$ 
\end{definition}
\subsection{Basis for the highest weight spaces at roots of unity}

In the sequel, we are interested in describing a basis for the highest weight spaces at roots of unity. For the generic case, we have seen that a good indexing set for $W^{q,\lambda}_{n,m}$ is the set of partitions $E_{n,m}$. When we pass to the case of roots of unity, because we have the vectors with indices maximum $N-1$, we will have a basis indexed by a smaller set. We will present this following \cite{Ito2}.

As before, consider the subspace $$\bar{V}^{\xi,\lambda}_{n,m}:=\C v_0 \oplus V^{\xi,\lambda}_{n-1,m}\subseteq V^{\xi,\lambda}_{n,m}.$$ 
Then, $\mathscr B_{\bar{V}^{\xi,\lambda}_{n,m}}:= \{ \bar{v}^{q,\lambda}_{\iota(e)}| e \in E^N_{n,m} \}$ give a basis for the space $\bar{V}^{\xi,\lambda}_{n,m}$.
\begin{proposition}(\cite{Ito2}\label{P:bqu1})

Define the following function $\phi_{\xi,\lambda}: \bar{V}^{\xi,\lambda}_{n,m}\rightarrow W^{\xi,\lambda}_{n,m}$ by the formula:
$$\phi_{\xi,\lambda}(w):=\sum_{k=0}^{N-1} (-1)^k {\xi}^{-k(n-1)\lambda} {\xi}^{2mk-k(k+1)} v_k \otimes E^k(w).$$
Then, for $\lambda$ generic with respect to $\xi$, the set 
$$\mathscr B_{W^{\xi,\lambda}_{n,m}}=\{ \phi_{\xi,\lambda}\big(\bar{v}^{\xi,\lambda}_{\iota(e)}\big)| e \in E^N_{n,m}\}$$ gives a basis for the highest weight space at roots of unity $W^{\xi,\lambda}_{n,m}$.
\end{proposition}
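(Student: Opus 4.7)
The strategy is to mirror the proof of Proposition \ref{P:bqu} (the analogous statement at generic $q$) and adapt it to the root of unity setting. I would organize the argument in three steps: (i) check that $\phi_{\xi,\lambda}(w)$ actually lies in the highest weight space $W^{\xi,\lambda}_{n,m}$; (ii) show that $\phi_{\xi,\lambda}$ is injective on $\bar{V}^{\xi,\lambda}_{n,m}$, giving an image of dimension $|E^{N}_{n,m}|$; and (iii) show that this image fills all of $W^{\xi,\lambda}_{n,m}$, which yields the basis.

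For step (i), the weight condition is immediate: in each summand $v_k\otimes E^k(w)$, the $K$-weights multiply as $\xi^{\lambda-2k}\cdot\xi^{(n-1)\lambda-2(m-k)} = \xi^{n\lambda-2m}$. For the $E$-annihilation, I would expand $E\,\phi_{\xi,\lambda}(w)$ using $\Delta(E)=E\otimes K+1\otimes E$, reindex, and collect terms to obtain a telescoping cancellation, exactly as in the generic case. The new feature is that the sum terminates at $k=N-1$ since $v_N\notin U_N^{\lambda}$, so the telescoping produces a residual term proportional to $v_{N-1}\otimes E^N(w)$. This residual vanishes thanks to the quantum Frobenius phenomenon at $q=\xi$: the identity $\Delta(E^N)=E^N\otimes K^N+1\otimes E^N$ (valid since the relevant $\xi$-binomials ${N \brack k}_{\xi^{-2}}$ are zero for $0<k<N$), combined with $E^N=0$ on each factor $U_N^{\lambda}$, forces $E^N=0$ on the whole tensor power, so $E^N(w)=0$. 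Step (ii) is then an easy check: projecting $\phi_{\xi,\lambda}(w)$ onto the subspace whose first tensor factor equals $v_0$ extracts only the $k=0$ summand, which is $w$ itself under the identification $\bar{V}^{\xi,\lambda}_{n,m}\cong \C v_0\otimes V^{\xi,\lambda}_{n-1,m}$.

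The main obstacle I expect is step (iii), the surjectivity of $\phi_{\xi,\lambda}$. Given any $x\in W^{\xi,\lambda}_{n,m}$, I would decompose it along the first tensor factor as $x=\sum_{k=0}^{N-1} v_k\otimes x_k$ with $x_k\in(U_N^{\lambda})^{\otimes(n-1)}$ of weight $\xi^{(n-1)\lambda-2(m-k)}$. Expanding $Ex=0$ through the coproduct yields recursive relations $Ex_j+[\lambda-j]_\xi\,\xi^{(n-1)\lambda-2(m-j-1)}\,x_{j+1}=0$ for $j=0,\ldots,N-2$, together with the top constraint $Ex_{N-1}=0$. The genericity of $\lambda$ with respect to $\xi$ forces $[\lambda-j]_\xi\ne 0$ for all $j\in\{0,\ldots,N-2\}$, so the recursion uniquely determines each $x_{j+1}$ from $x_0\in V^{\xi,\lambda}_{n-1,m}$, and the top constraint is automatic from $E^N=0$ on the tensor power. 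A direct comparison of coefficients shows that the unique solution of this recursion matches precisely the scalars appearing in the definition of $\phi_{\xi,\lambda}$, so $x=\phi_{\xi,\lambda}(v_0\otimes x_0)$; combined with step (ii) this yields the claimed basis indexed by $E^{N}_{n,m}$.
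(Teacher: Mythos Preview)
The paper does not give its own proof of this proposition; it is quoted directly from \cite{Ito2}. So there is nothing to compare against, and your proposal is a self-contained argument filling in what the paper cites.

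Your three-step plan is the natural one and is correct in structure. The weight check in step (i) is fine, and the key point you identify --- that the telescoping leaves a residual term proportional to $v_{N-1}\otimes E^{N}(w)$, killed by the quantum Frobenius identity $\Delta(E^{N})=E^{N}\otimes K^{N}+1\otimes E^{N}$ together with $E^{N}=0$ on each factor $U_N^{\lambda}$ --- is exactly the mechanism that makes the root-of-unity case go through. Step (ii) is immediate. Step (iii) via the first-factor decomposition and the recursion is also the right approach, and your observation that the top constraint $Ex_{N-1}=0$ follows automatically from $E^{N}x_0=0$ is correct, since iterating the recursion expresses $Ex_{N-1}$ as a nonzero scalar times $E^{N}x_0$.

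One small caution: your final sentence asserts that the scalars produced by the recursion match \emph{precisely} those in the stated formula for $\phi_{\xi,\lambda}$. If you carry out the recursion using the action $Ev_i=[\lambda+1-i]_{\xi}\,v_{i-1}$ from Proposition \ref{P:actions}, you will find an extra factor $\prod_{j=0}^{k-1}[\lambda-j]_{\xi}^{-1}$ compared with the displayed coefficients; this reflects a normalisation choice in \cite{Ito2} and \cite{JK} relative to the conventions recorded here. This does not affect the conclusion: once you have (i) and (ii), step (iii) already shows $\dim W^{\xi,\lambda}_{n,m}=\dim \bar V^{\xi,\lambda}_{n,m}=|E^{N}_{n,m}|$, so an injective map between them is automatically an isomorphism and the proposed set is a basis. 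I would phrase the end of your argument that way rather than relying on an exact scalar match.
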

\begin{remark}
It follows that:
$$\text{ dim} \ (W^{\xi,\lambda}_{n,m} )=\text{card} \ (E^N_{n,m})=d^N_{n,m} \leq d_{n,m}=\text{ dim} \ (W^{q,\lambda}_{n,m} ).$$
which emphasises the difference that occurs when we pass towards roots of unity. 
\end{remark}

\subsection{Quantum representation at roots of unity}{\label{Quantumrep}}
Now we are ready to introduce the representation of the braid group on the highest weight spaces at roots of unity, which will play an inportant role for the topological model for the coloured Alexander polynomials, as we will see later. 

\begin{definition}(\cite{Ito2})(Quantum representation at roots of unity)

Suppose $\lambda \in \C \setminus \N$ is generic with respect to $\xi$. Then, the braid group representation $\varphi^{\xi,\lambda}_{n,m}$ written in the basis $\mathscr B_{W^{\xi,\lambda}_{n,m}}$ is called the quantum representation at roots of unity of the braid group:
$$\varphi^{\xi,\lambda}_{n,m}:B_n \rightarrow Aut(W^{\xi,\lambda}_{n,m}, B_{W^{\xi,\lambda}_{n,m}})\simeq GL(d^N_{n,m}, \C).$$

\end{definition}

\section{The ADO polynomial}\label{2}
Let us fix a colour $\lambda \in \C$. We will consider an invariant for oriented links coloured with the representation $U_N^{\lambda}$. 
\begin{notation}
Let $\mathscr T_{U_N^{\lambda}}\subseteq \mathscr T_{Rep^{f. dim}(U_q(sl(2)))}$ the full subcategory of oriented tangles whose objects are sequences of points coloured with:
$$(U_N^{\lambda},+) \ \ \ (U_N^{\lambda},-).$$
We denote by 
$$\mathscr F_{\lambda}:=\F \mid_{\mathscr T_{U_N^{\lambda}}}: \mathscr T_{U_N^{\lambda}}\rightarrow Rep^{f. dim}(U_\xi(sl(2))) .$$
\end{notation}
one of the main features of this method is the property that in general, the Reshetikhin-Turaev construction for quantum groups leads to an invariant for coloured links. The main issue consists in the problem that for quantum groups at roots of unity, this invariant vanishes.

We start with $L$ which is an oriented link. Then, we can see it in the category of tangles as:
$$L \in Hom_{\mathscr T_{U_N^{\lambda}}}(\varnothing, \varnothing).$$
\begin{proposition}
For any $\lambda \in \C \setminus \N$, the previous construction leads to the vanishing invariant:
$$\mathscr F_{\lambda}(L)=0 \in Hom_{U_{\xi}(sl(2))}(\C, \C).$$
\end{proposition}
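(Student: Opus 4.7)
The plan is to exploit the Markov-type trace structure underlying the Reshetikhin--Turaev functor. First, I would present the link $L$ as the closure of a $(1,1)$-tangle $T$, that is, an element $T \in \Hom_{\mathscr T_{U_N^{\lambda}}}\bigl((U_N^{\lambda},+), (U_N^{\lambda},+)\bigr)$ obtained by cutting one strand of $L$. By functoriality of $\mathscr F_\lambda$ and the description of the right-duality morphisms,
$$\mathscr F_\lambda(L) \;=\; \ev_\lambda \circ \bigl(\mathscr F_\lambda(T) \otimes \Id_{(U_N^\lambda)^*}\bigr) \circ \tcoev_\lambda,$$
so the computation reduces to understanding the endomorphism $\mathscr F_\lambda(T) \in \End_{U_\xi(sl(2))}(U_N^\lambda)$ composed with the partial quantum trace.

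The second step is a Schur-type argument. For $\lambda \in \C \setminus \N$ generic with respect to $\xi$, the module $U_N^{\lambda}$ is simple: the weight vectors $v_0,\dots,v_{N-1}$ carry pairwise distinct $K$-weights $\xi^{\lambda-2j}$, so any $U_\xi(sl(2))$-endomorphism is diagonal in this basis; moreover the action formulas in Proposition \ref{P:actions} give $F v_i = [i+1]_\xi v_{i+1}$ and $E v_i = [\lambda+1-i]_\xi v_{i-1}$ with both quantum integers nonzero in the relevant ranges (for $[i+1]_\xi$ because $0\le i\le N-2$, and for $[\lambda+1-i]_\xi$ because of genericity). Commuting with $E$ and $F$ therefore equates all diagonal coefficients, so Schur forces $\mathscr F_\lambda(T) = c\cdot \Id_{U_N^\lambda}$ for some $c \in \C$, and hence
$$\mathscr F_\lambda(L) = c \cdot \qdim(U_N^\lambda), \qquad \qdim(U_N^\lambda) := \ev_\lambda \circ \tcoev_\lambda.$$

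The final step is to verify the quantum dimension vanishes. Combining $\tcoev_\lambda(1) = \sum_j v_j \otimes v_j^*$, $\ev_\lambda(v\otimes f) = f(K^{1-N}v)$, and $K v_j = \xi^{\lambda-2j}v_j$ yields
$$\qdim(U_N^\lambda) = \sum_{j=0}^{N-1} \xi^{(1-N)(\lambda-2j)} = \xi^{(1-N)\lambda} \sum_{j=0}^{N-1} \bigl(\xi^{2(N-1)}\bigr)^j.$$
Since $\xi = e^{\pi i/N}$, we have $\xi^{2(N-1)} = e^{-2\pi i/N}$, a primitive $N$-th root of unity for $N \ge 2$; the geometric sum is therefore $0$, giving $\mathscr F_\lambda(L) = 0$.

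The substantive content is concentrated in the Schur step and the final geometric sum; neither is deep, but both rely in an essential way on the specific genericity hypothesis on $\lambda$ and on the chosen pivotal element $K^{1-N}$ appearing in the left duality $\ev_\lambda$. If one wished to avoid invoking simplicity of $U_N^\lambda$, an alternative would be to observe directly that $K$-equivariance of $\mathscr F_\lambda(T)$ makes it weight-diagonal (one-dimensional weight spaces under genericity) and hence reduces the partial trace to the same vanishing geometric sum; this is really the same calculation, and it explains precisely why the present naive Reshetikhin--Turaev construction must be renormalised (by removing the $\qdim = 0$ factor) in order to produce the nontrivial invariant $\Phi_N(L,\lambda)$.
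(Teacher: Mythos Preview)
The paper states this proposition without proof, treating it as a known fact that motivates the ADO renormalisation. Your argument is the standard one and is correct: cut a strand to obtain a $(1,1)$-tangle $T$, use simplicity of $U_N^\lambda$ to write $\mathscr F_\lambda(T)=c\cdot\Id$, and then compute that $\qdim(U_N^\lambda)=\ev_\lambda\circ\tcoev_\lambda$ is a vanishing geometric sum in a primitive $N$-th root of unity (for $N\ge 2$).

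One small remark on scope: you invoke genericity of $\lambda$ with respect to $\xi$ in the Schur step, but in fact the $F$-equivariance half of your argument already suffices on its own. Since $[i+1]_\xi\ne 0$ for $0\le i\le N-2$ (this uses only that $\xi$ is a primitive $2N$-th root of unity, not any hypothesis on $\lambda$), commuting a $K$-diagonal endomorphism with $F$ forces all diagonal entries to coincide. Thus the conclusion holds for every $\lambda$, in particular for the full range $\lambda\in\C\setminus\N$ claimed in the proposition, including negative integers where genericity with respect to $\xi$ can fail. Your closing ``alternative'' paragraph is therefore slightly misleading: $K$-equivariance alone only gives a diagonal endomorphism, not a scalar one, and one still needs the $F$-step to equate the diagonal entries before the geometric sum becomes relevant.
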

In their paper, Akustu, Deguchi and Ohtsuki (\cite{ADO}) introduced the idea of normalisation, which involves cutting a strand of the link and applying the Reshetikhin-Turaev functor to the new $(1,1)-$tangle that is obtained out of it. Let us make this precise.

Let us choose a strand of the link $L$ and denote by $T$ the $(1,1)$- tangle obtained from $L$ after cutting this strand. 
\begin{remark}
It is known that for $\lambda \in \C \setminus \N $, the representation $U_N^{\lambda}$ is simple. This means that:
$$ End_{U_{\xi}(sl(2))}(U_N^{\lambda})\simeq \C \cdot Id_{U_N^{\lambda}}.$$
For $ f \in End_{U_{\xi}(sl(2))}(U_N^{\lambda})$, which will be a scalar times identity, we denote the scalar as follows:
$$ f=<f> \cdot Id \in \C \cdot Id_{U_N^{\lambda}}.$$
\end{remark}
We notice that all the coefficients that occur in the braiding formula and also in the expressions used for dualities are in $\Z[q^{\pm1},q^{\pm \lambda}]$. This shows that all the coefficients that appear in the Reshetikhin-Turaev functor are in fact in this ring. Putting these together, the ADO invariant is defined as follows. 
\begin{definition} (The ADO invariant \cite{ADO}, \cite{Ito2})
Consider the following quantity associated to the $(1,1)-tangle$:
$$\Phi_{N}(T,\lambda):= <\mathscr F_{\lambda}(T)> \ \in \Z[q^{\pm1},q^{\pm \lambda}].$$
Then, it leads to a well defined invariant for oriented links:
$$\Phi_{N}(L,\lambda):=\Phi_{N}(T_L,\lambda)$$
where the tangle $T_L$ is obtained from $L$ by cutting one strand.
This is called the $N^{th}$ coloured Alexander invariant (ADO invariant) of the oriented link $L$.
\end{definition}
Following the functoriality of the Reshetikhin-Turaev construction, we can express the coloured Alexander invariant of a link using the dualities $\ev$, $\tcoev$ and the braid group action $\varphi_n^{\lambda}$. We will see this in the following part.
\begin{notation}
We denote by $$w: B_n\rightarrow \Z$$ the abelianisation map for the braid group.\\
Also, by $\mathbb I_n$ we denote the trivial braid with $n$ strands in $B_n$.
\end{notation}
If we start with a link $L$, let us consider a braid $\beta_n\in B_n$ that leads to $L$ by braid closure. Then, we cut the first strand and obtain a $(1,1)-$tangle. In the sequel, we split this diagram into three main parts: the bottom part, the braid and the upper level, which will have the following components:

1) the evaluation                              
$ \ \ \ \ \ \ \ \ \   \uparrow \tikz[x=1mm,y=1mm,baseline=0.5ex]{\draw[<-] (3,0) .. controls (3,3) and (0,3) .. (0,0); \draw[<-] (6,0) .. controls (6,6) and (-3,6) .. (-3,0); \draw[draw=none, use as bounding box](0,0) rectangle (3.5,3);}
$

2) braid level                                    $ \ \ \ \ \ \ \ \ \ \ \ \ \ \ \beta_n \  \otimes \ \unit_{n-1}.$

3) the coevaluation                                 
$ \ \ \ \ \ \  \uparrow  \tikz[x=1mm,y=1mm,baseline=0.5ex]{\draw[<-] (0,3) .. controls (0,0) and (3,0) .. (3,3); \draw[<-] (-3,3) .. controls (-3,-3) and (6,-3) .. (6,3); \draw[draw=none, use as bounding box](-0.5,0) rectangle (3,3);}
$

The ADO invariant of $L$ is constructed using the functor $\mathscr F_{\lambda}$. We write the expression that one gets by evaluating this functor onto the three levels of the tangle, which are drawn above.
\begin{notation}
For $V_1,...,V_n$ vector spaces over $\C$, and $i\in \N$ we consider the twisted operator:
$\tau_(i,n):V_1\otimes...\otimes V_n \rightarrow V_1 \otimes V_{i-1} \otimes V_n \otimes V_{i+1}...\otimes V_{n-1} \otimes V_i$
which interchanges the $i^th$ and $n^{th}$ components.

We denote by 
$$\ev^i_{\lambda}:(U_N^{\lambda})^{\otimes i}\otimes((U_N^{\lambda})^{\star})^{\otimes i}\rightarrow(U_N^{\lambda})^{\otimes i-1} \otimes ((U_N^{\lambda})^{\star})^{\otimes i-1}. $$
$$\tcoev^i_{\lambda}:(U_N^{\lambda})^{\otimes i-1}\otimes((U_N^{\lambda})^{\star})^{\otimes i-1}\rightarrow(U_N^{\lambda})^{\otimes i} \otimes ((U_N^{\lambda})^{\star})^{\otimes i}. $$
 the evaluation ( and coevaluation ) corresponding to the first and last component:
$$\ev^i_{\lambda}:= (\ev_{\lambda}\otimes Id^{\otimes 2i-2})\circ \tau_{2,2i}.$$

$$\tcoev^i_{\lambda}:=\tau_{2,2i} \circ (\tcoev_{\lambda}\otimes Id^{\otimes 2i-2}).$$

We notice that $\ev^1_{\lambda}=\ev_{\lambda}$ and $\tcoev^1_{\lambda}=\tcoev_{\lambda}$.
\end{notation}
\begin{proposition}(The ADO invariant from a braid presentation)\\{\label{defADO}}
Let $L$ be an oriented link. Consider $\beta_n \in B_n $ such that $L=\hat{\beta_n}$.
Then, the ADO invariant of $L$ can be expressed as follows:
$$\Phi_{N}(L,\lambda)=\xi^{(N-1)\lambda w(\beta_n)}\cdot$$
$$ <\left(Id_{U^{\lambda}_N}\otimes \left( \ev^{n-1}_{\lambda} \circ....\circ \ev^{1}_{\lambda}  \right) \right) \circ \left( \varphi_n^{\xi,\lambda}(\beta_n\otimes \mathbb I_{n-1} \right) \circ \left( Id_{U^{\lambda}_N} \otimes \left( \tcoev^{n-1}_{\lambda} \circ....\circ \tcoev^{1}_{\lambda} \right) \right)>.$$ 
\end{proposition}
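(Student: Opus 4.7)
The plan is to unpack the definition $\Phi_N(L,\lambda) = \langle \mathscr F_\lambda(T_L)\rangle$ by choosing a convenient presentation of the $(1,1)$-tangle $T_L$. Since $L = \hat\beta_n$, I would take $T_L$ to be obtained by cutting the leftmost closure arc of $\beta_n$, producing a canonical diagram of the tangle.

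The key geometric step is to decompose this diagram as a vertical composition of three layers. After an isotopy pulling the $n-1$ remaining closure arcs out to the right of $\beta_n$, the tangle reads bottom-to-top as: (i) the input strand at position $1$ together with $n-1$ nested cups joining position $i$ to position $2n+1-i$ for $i = 2,\ldots,n$; (ii) the braid $\beta_n$ acting on positions $1,\ldots,n$ tensored with the trivial braid on the closure strands at positions $n+1,\ldots,2n-1$; (iii) $n-1$ nested caps mirroring the bottom cups, with the output strand exiting at position $1$. Matching with the definitions of $\tcoev^i_\lambda$ and $\ev^i_\lambda$ in the text, the bottom layer realizes $\mathrm{Id}_{U_N^\lambda}\otimes(\tcoev^{n-1}_\lambda\circ\cdots\circ\tcoev^1_\lambda)$ and the top layer realizes $\mathrm{Id}_{U_N^\lambda}\otimes(\ev^{n-1}_\lambda\circ\cdots\circ\ev^1_\lambda)$. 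Applying the monoidal functor $\mathscr F_\lambda$ term by term and using simplicity of $U_N^\lambda$ to extract a scalar via $\langle\,\cdot\,\rangle$ then turns this decomposition directly into the composition on the right-hand side of the formula, up to an overall scalar.

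The delicate point is the prefactor $\xi^{(N-1)\lambda w(\beta_n)}$. The R-matrix assignment for crossings in fact agrees between $\mathscr F_\lambda$ and $\varphi^{\xi,\lambda}_n$, since both send the positive generator $\sigma_i$ to $\mathscr R_{\lambda,\lambda} = R\circ\tau$; the writhe factor arises instead as a framing-type correction forced by the modified pivotal structure built into $\ev_\lambda$. Namely, the extra $K^{1-N}$-insertion threading each closure loop interacts with the diagonal $q^{H\otimes H/2}$-piece of the R-matrix at every crossing of $\beta_n$, producing an extra scalar of $\xi^{-(N-1)\lambda}$ per signed crossing relative to what an honest link invariant would give; the prefactor $\xi^{(N-1)\lambda w(\beta_n)}$ cancels this accumulated phase.

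The main technical obstacle is precisely this last bookkeeping step: one has to verify that positive and negative crossings combine to give the abelianisation $w(\beta_n)$ rather than the total number of crossings, and that the per-crossing scalar coming from the partial trace is exactly $\xi^{-(N-1)\lambda}$. I would handle it by an explicit computation on the weight-space generators of $U_N^\lambda$, using the actions of $E$, $F$, $K$ from Proposition \ref{P:actions}, and cross-check on the base case $n=2$, $\beta_2 = \sigma_1$, whose closure is the unknot and so must evaluate to $1$.
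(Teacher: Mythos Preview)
Your three-layer decomposition is exactly the argument the paper gives: the text immediately preceding the proposition sketches the same cup/braid/cap split and then states the formula, with no further proof beyond a one-line remark that the prefactor $\xi^{(N-1)\lambda w(\beta_n)}$ ``corresponds to a framing correction (see \cite{Ito2})''. So on the main body of the statement you and the paper agree, and in fact you are more explicit than the paper about matching the nested cups and caps to the iterated $\tcoev^i_\lambda$ and $\ev^i_\lambda$.

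Where your account wobbles is the mechanism you propose for the prefactor. The scalar $\xi^{(N-1)\lambda w(\beta_n)}$ is not produced by the $K^{1-N}$ in $\ev_\lambda$ interacting with the $q^{H\otimes H/2}$ part of the $R$-matrix at each crossing of $\beta_n$; those operators act on disjoint tensor slots (the $K^{1-N}$ sits on the closure strands, the crossings of $\beta_n$ live on the first $n$ strands). The correct source is the ribbon/twist element: the $R$-matrix assignment is only a framed-link invariant, braid closure imposes the blackboard framing, and the per-crossing scalar is exactly the eigenvalue of the ribbon element on $U_N^\lambda$. This is what makes the expression invariant under the second Markov move, and it is why the exponent is the signed crossing count $w(\beta_n)$ rather than the total number of crossings. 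Your proposed base-case check ($n=2$, $\beta_2=\sigma_1$) is a good sanity test and would indeed pin down the constant, but the conceptual explanation should be rephrased in terms of the twist rather than an interaction at each crossing. The paper itself does not carry out this computation and simply defers to \cite{Ito2}.
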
 
Here, the term $\xi^{(N-1)\lambda w(\beta_n)}$ corresponds to a framing correction (see \cite{Ito2}). 
 
 This definition provides a sequence of invariants indexed by the natural numbers. It is known that the first term of this sequence is the original Alexander polynomial, while certain specialisations at natural parameters lead to the sequence of Kashaev invariants ${K_N(K)}_{N \in \N}$. 
 \begin{theorem}
 For any knot $K$ one has the following relations:
 $$1) \Phi_N(L,\lambda)=\Delta(K,t)|_{t=\xi^{-2\lambda}}.$$
where $\Delta(K,t)$ is the Alexander polynomial of the knot.
$$2)  \Phi_N(L,N-1)=K_N(K).$$
 \end{theorem}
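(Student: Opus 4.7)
The plan is to establish both specialisation identities directly from the explicit Reshetikhin--Turaev formula for $\Phi_N(L,\lambda)$ given in Proposition \ref{defADO}, by writing down the braiding $\mathscr R_{\lambda,\lambda}$ and the dualities $\ev_\lambda, \tcoev_\lambda, \coev_\lambda, \tev_\lambda$ on $U^\lambda_N$ explicitly and then either (a) matching a skein relation, or (b) matching a state sum. Both specialisations are classical and appear in \cite{ADO} and in Murakami--Ohtsuki's work; the role of this proof is mainly to record that the present formalism reproduces them.

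For part (1), concerning recovery of the Alexander polynomial, I would first compute the characteristic polynomial of the braiding $\mathscr R_{\lambda,\lambda}$ acting on the $2N^2$-dimensional space $U^\lambda_N \otimes U^\lambda_N$, and in particular on the summand relevant for the skein relation (this is where $N=2$ is the clean case; for higher $N$ the reduction relies on the known collapse of $\Phi_N$ onto the $N=2$ theory after the substitution $t=\xi^{-2\lambda}$, coming from the fact that both sides satisfy the same skein data up to a variable change). Using the formulas from Proposition \ref{P:actions} and $q^{\frac{H\otimes H}{2}}$ action one writes the two eigenvalues of $\mathscr R$ explicitly as monomials in $\xi$ and $\xi^\lambda$, checks that a quadratic (or appropriately deformed) skein relation is satisfied, and matches it with the classical Conway skein after setting $t = \xi^{-2\lambda}$. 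The normalisation is then fixed by evaluating on the unknot and on one nontrivial example (for instance the Hopf link or the right-handed trefoil), using the framing correction $\xi^{(N-1)\lambda w(\beta_n)}$ to absorb the writhe dependence.

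For part (2), I would specialise $\lambda = N-1$ in the formula of Proposition \ref{defADO}. At this value the module $U^{N-1}_N$ becomes isomorphic to the standard $N$-dimensional highest weight $U_\xi(sl(2))$-module used in the construction of the coloured Jones polynomial $J_N$, and the R-matrix formula from the previous proposition reduces to the one used in the $J_N$ construction. One then shows that the $(1,1)$-tangle trace $\langle \mathscr F_{N-1}(T_L)\rangle$ equals $J_N(L,\xi)/J_N(\text{unknot},\xi)$ using standard functoriality arguments, and invokes Murakami--Murakami's identification of this ratio at $\xi = e^{2\pi i /(2N)}$ with the Kashaev invariant $K_N(K)$. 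The framing correction $\xi^{(N-1)\lambda w(\beta_n)}$ evaluated at $\lambda = N-1$ matches the twist normalisation of $K_N$.

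The principal obstacle is that $\lambda = N-1$ is a nongeneric value: Proposition \ref{P:bqu1} and the denominators in the coefficients $[i+1]_\xi [\lambda - i]_\xi$ appearing in the action of $F$ and in $\phi_{\xi,\lambda}$ are not defined there, and $U^\lambda_N$ degenerates. The cleanest way around this is to treat both sides of (2) as meromorphic functions of $\lambda$ in a neighbourhood of $N-1$ and show by explicit computation that the singularities in each factor of the composition $(\mathrm{Id}\otimes \ev^{n-1}_\lambda \cdots \ev^1_\lambda)\circ \varphi^{\xi,\lambda}_n(\beta_n\otimes \mathbb{I}_{n-1})\circ (\mathrm{Id}\otimes \tcoev^{n-1}_\lambda \cdots \tcoev^1_\lambda)$ cancel globally. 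This cancellation, together with continuity of $\Phi_N$ in $\lambda$ on the generic locus, allows one to take the limit $\lambda \to N-1$ and identify it with the Kashaev state sum. For part (1), the analogous care is needed only if $-2\lambda \in \Z$, i.e.\ when the Alexander substitution hits a special value.
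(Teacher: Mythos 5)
The paper does not actually prove this theorem: it is stated as a known fact, with the surrounding text ("It is known that the first term of this sequence is the original Alexander polynomial, while certain specialisations at natural parameters lead to the sequence of Kashaev invariants") pointing to \cite{ADO}, \cite{Mu} and the Kashaev/Murakami--Murakami literature. So there is no argument in the paper to compare yours against; the question is only whether your sketch would genuinely close the gap, and as written it would not.

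For part (1), your reduction of general $N$ to the two-dimensional case "relies on the known collapse of $\Phi_N$ onto the $N=2$ theory after the substitution $t=\xi^{-2\lambda}$" --- but that collapse \emph{is} statement (1), so the step is circular. A quadratic (two-eigenvalue) skein relation for $\mathscr R_{\lambda,\lambda}$ on $U_N^{\lambda}\otimes U_N^{\lambda}$ only exists when the module is two-dimensional; for larger $N$ the braiding has more eigenvalues and $\Phi_N$ genuinely differs from the Alexander polynomial, so the identity as literally written (with arbitrary $N$ on the left) can only be read as referring to the first term of the sequence, where the eigenvalue/skein computation of \cite{ADO} does apply. For part (2), the quantity $J_N(\text{unknot},\xi)=[N]_\xi$ \emph{vanishes} at $\xi=e^{2\pi i/2N}$ --- this vanishing is precisely why the ADO construction cuts a strand --- so the ratio $J_N(L,\xi)/J_N(\text{unknot},\xi)$ you propose as the bridge to Murakami--Murakami is $0/0$ and cannot serve; the correct statement identifies the $(1,1)$-tangle invariant directly with the Kashaev state sum. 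You rightly flag that $\lambda=N-1\in\N$ lies outside the locus where $U_N^{\lambda}$ is simple (so $\End_{U_{\xi}(sl(2))}(U_N^{\lambda})=\C\cdot\Id$, and hence the bracket $<\cdot>$, is no longer automatic), but the proposed meromorphic-continuation repair is only asserted, not carried out. In short, the outline points at the right literature, but each of the three load-bearing steps --- the skein collapse, the Jones-ratio identification, and the degeneration at $\lambda=N-1$ --- is respectively circular, ill-defined, or left as a claim.
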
 
\section{Homological braid groups representations}\label{3}

In this part we will present a sequence of braid group representations constructed by Lawrence. We will see later that this homology groups form homological counterparts which correspond to the the highest weight spaces, together with the quantum representation of the braid group from section \ref{2}.

\subsection{Covering space of the configuration space in the punctured disc }

\

Let $n,m \in \N$ be two natural numbers. 
Let us denote by $$D_n:=\D^2 \setminus \{p_1,...,p_n \} $$ the $n$-punctured disc, where $\D^2\subseteq \C$ is the unit closed disk (with boundary) and $\{ p_1,...,p_n\} $ are $n$ distinct points in its interior, which are also on the real axis.  

Consider the configuration space of $m$ unordered points in $D_n$: $$C_{n,m}=Conf_m(\D_n)= (\D^m_n \setminus \Delta) / Sym_m$$
where $\Delta=\{x=(x_1,...,x_n)\in \D^m_n | \ \exists \ i,j \ \text { such that } x_i=x_j\}.$

In the sequel we will use the homology of a certain covering space associated to $C_{n,m}$. We will define it using a certain local system as follows.
\begin{remark}
Let $$\rho: \pi_1(C_{n,m}) \rightarrow H_1(C_{n,m})$$ be the abelianisation map. 
Then, for any $n \in \N$ and $m\in N$, $m \geqslant 2$ one has:
$$H_1(C_{n,m})\simeq \ \Z^{n} \ \oplus \ \Z$$ 
$$ \ \ \ \ \ \ \ \ \ \ \ \ \ \ \ \ \ \ \ \ \ \ \ \ \ \ \ \ \ \ \ \ \ \ \ \ \ \ <\rho(\sigma_i)> \ <\rho(\delta)> \ \ \ \ \ {i\in \{1,...,n\}}. $$
Here, $\sigma_i\in \pi_1(C_{n,m})$ is the loop in the configuration space with $n-1$ fixed components and the first one going on a loop in $D_n$ around the puncture $p_i$. 

The last component $\delta \in \pi_1(C_{n,m})$ is given by a loop in the configuration space given by $(n-2)$ constant points and the first two components making a circle, which swaps the two initial points, as in the picture.
\end{remark}

\begin{tikzpicture}
\foreach \x/\y in {0/2,2/2,4/2,2/1,2.3/1,3/1.07} {\node at (\x,\y) [circle,fill,inner sep=1pt] {};}
\node at (0.2,2) [anchor=north east] {$p_1$};
\node at (2.2,2) [anchor=north east] {$p_i$};
\node at (4.2,2) [anchor=north east] {$p_n$};
\node at (3,2) [anchor=north east] {$\sigma_i$};
\node at (2.2,1) [anchor=north east] {$d_1$};
\node at (2.5,1.02) [anchor=north east] {$d_2$};
\node at (3.2,1.05) [anchor=north east] {$d_n$};
\node at (2.62,2.3) [anchor=north east] {$\wedge$};
\draw (2,1.8) ellipse (0.4cm and 0.8cm);
\draw (2,2) ellipse (3cm and 1cm);
\foreach \x/\y in {7/2,9/2,11/2,8.9/1,9.5/1,10/1.07} {\node at (\x,\y) [circle,fill,inner sep=1pt] {};}
\node at (7.2,2) [anchor=north east] {$p_1$};
\node at (9.2,2) [anchor=north east] {$p_i$};
\node at (11.2,2) [anchor=north east] {$p_n$};
\node at (9.2,1) [anchor=north east] {$d_1$};
\node at (9.7,1.02) [anchor=north east] {$d_2$};
\node at (10.2,1.05) [anchor=north east] {$d_n$};
\node at (9,1.5) [anchor=north east] {$\delta$};
\draw (9,2) ellipse (3cm and 1cm);
\draw (9.5,1)  arc[radius = 3mm, start angle= 0, end angle= 180];
\draw [->](9.5,1)  arc[radius = 3mm, start angle= 0, end angle= 90];
\draw (8.9,1) to[out=50,in=120] (9.5,1);
\draw [->](8.9,1) to[out=50,in=160] (9.25,1.12);
\end{tikzpicture}
\begin{definition}(Covering space)\\
Consider the function 
$\epsilon: \Z^{n} \ \oplus \ \Z \rightarrow \ \Z \ \oplus \ \Z \ $

$ \ \ \ \ \ \ \ \ \ \ \ \ \ \ \ \ \ \ \ \ \ \ \ \ \ \ \ \ \ \ \ \ \ \ \ \ \ \ \ \ \  <x> <d>$\\
given by the augmentation of the first $n$ components  
$$\epsilon((x_1,...,x_n),y)=(x_1+...+x_n,y).$$
We consider the following local system 
$\phi: \pi_1(C_{n,m}) \rightarrow \Z \ \oplus \ \Z$ given by:

$ \ \ \ \ \ \ \ \ \ \ \ \ \ \ \ \ \ \ \ \ \ \ \ \ \ \ \ \ \ \ \ \ \ \ \ \ \ \ \ \ \ \ \ \ \ \ \ \ \ \ \ \ \ \ \ \ \ \ \ \ \  <x> <d>$
$$\phi= \epsilon \circ \rho.$$ \label{R:ll}
Let $\tilde{C}_{n,m}$ be the covering space of $C_{n,m}$ which corresponds to $Ker(\phi)$, and denote by $\pi: \tilde{C}_{n,m} \rightarrow C_{n,m}$ the projection map associated to it.
\end{definition}

\begin{center}
\begin{tikzpicture}
\foreach \x/\y in {-1.2/2, -0.2/2 , 1.1/2 , 4/2 , 2.1/2 , 3.1/2 , 5/2 } {\node at (\x,\y) [circle,fill,inner sep=1pt] {};}
\node at (-1,2) [anchor=north east] {$1$};
\node at (0,2) [anchor=north east] {$2$};
\node at (1.5,2) [anchor=north east] {i-1};
\node at (2.3,1.9) [anchor=north east] {i};
\node at (3.5,2) [anchor=north east] {i+1};
\node at (4.5,2) [anchor=north east] {n-1};
\node at (5.8,2) [anchor=north east] {n};
\node at (-0.6,2.7) [anchor=north east] {$\color{red}e_1$};
\node at (4.6,2.7) [anchor=north east] {$\color{red}e_{n-1}$};
\node at (0.7,1.5) [anchor=north east] {\color{green}$Conf_{f_1}$};
\node at (4.8,1.5) [anchor=north east] {$\color{green}Conf_{f_{n-1}}$};
\node at (0.6,3) [anchor=north east] {\huge{\color{red}$\F_e$}};
\node at (3.5,3) [anchor=north east] {\huge{\color{green}$\D_f$}};
\node at (0.8,6) [anchor=north east] {\huge{\color{red}$\tilde{\F}_e$}};
\node at (3,5.4) [anchor=north east] {\huge{\color{green}$\tilde{\D}_f$}};
\node at (-2.5,2) [anchor=north east] {\large{$C_{n,m}$}};
\node at (-2.5,6) [anchor=north east] {\large{$\tilde{C}_{n,m}$}};

\draw [very thick,black!50!red,->](-1.2,2) to[out=60,in=110] (-0.2,2);
\draw [very thick,black!50!red,->](-1.2,2) to (-0.2,2);
\draw [very thick,black!50!red,->](4,2) to[out=60,in=110] (5,2);
\draw [very thick,black!50!red,->](4,2) to (5,2);
\draw [very thick,black!50!green,-](-0.7,1) to (-0.7,3);
\draw [very thick,black!50!blue,->](-2.2,4.5) to[out=-120,in=110] (-2.2,3);
 \draw[very thick,black!50!red] (0.5, 5.5) circle (0.8);
\draw [very thick,black!50!green,-](4.7,1) to (4.7,2.95);
 \draw[very thick,black!50!green] (2.2, 5.16) circle (0.9);

\
\draw (2,2) ellipse (4cm and 1.3cm);
\draw (2,5.4) ellipse (4cm and 1.2cm);

\end{tikzpicture}
\end{center}

\subsection{Braid group action}  
In this part, we see that this homology of the covering space of the configuration space in the puncture disc has the feature of carrying a braid group action. We remind that the braid group is the mapping class group of the punctured disc relative to its boundary:
$$B_n= MCG(D_n)=Homeo^+(D_n, \partial)/ \text{isotopy} \ \ \ $$
Then $B_n$ will act onto the configuration space $C_{n,m}$ by homeomorphisms and it will induce an action on its fundamental group $$B_n \curvearrowright \pi_1(C_{n,m}).$$ 
\begin{proposition}
This braid group action behaves well with respect to the local system $\phi$, and it can be lifted to an action onto the homology of the convering. Moreover, this action is compatible with the action of the deck transformations and one has that:
$$B_n \curvearrowright H^{\text{lf}}_m(\tilde{C}_{n,m}, \Z) \ (\text{ as a module over }\Z[x^{\pm}, d^{\pm}]).$$
\end{proposition}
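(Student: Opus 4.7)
The plan is to verify each of the three assertions in the proposition separately: (a) that the local system $\phi$ is preserved by the braid action, (b) that as a consequence the homeomorphisms representing braids admit canonical lifts to $\tilde{C}_{n,m}$ compatible with composition, and (c) that these lifts commute with the deck transformation group. Throughout, I would identify $B_n$ with $\mathrm{MCG}(D_n,\partial)$ and represent a braid $\beta$ by a boundary-fixing homeomorphism $f_\beta: D_n \to D_n$; the induced homeomorphism $f_\beta^{(m)}: C_{n,m}\to C_{n,m}$ is the main geometric actor.

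First I would check compatibility with $\phi$. Since $\phi = \epsilon \circ \rho$ and $\rho$ is just the abelianisation map, it is enough to analyse how $(f_\beta^{(m)})_{*}$ acts on the generators $\rho(\sigma_i)$ and $\rho(\delta)$ of $H_1(C_{n,m})$. A braid permutes the punctures by the underlying permutation $\pi(\beta)\in S_n$, so it sends a small loop around $p_i$ to a (conjugate of a) small loop around $p_{\pi(\beta)(i)}$; in $H_1$ this gives $(f_\beta)_{*}\rho(\sigma_i)=\rho(\sigma_{\pi(\beta)(i)})$. The class $\rho(\delta)$ is represented by a small swap of two points near the boundary where $f_\beta$ is the identity, so it is fixed. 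Applying the augmentation $\epsilon$ (which sums the first $n$ components and leaves the $\Z$-factor alone) kills the permutation, and therefore $\phi \circ (f_\beta^{(m)})_{*} = \phi$. This is exactly the statement that $(f_\beta^{(m)})_{*}$ preserves $\ker(\phi)$, which is the subgroup corresponding to the covering $\tilde C_{n,m}\to C_{n,m}$.

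Second, by standard covering-space theory, a self-homeomorphism whose induced map on $\pi_1$ preserves the kernel of the covering admits lifts to the total space, and such lifts form a torsor over the deck group. To pin down a single lift $\tilde f_\beta$ and get a group homomorphism $B_n \to \mathrm{Homeo}(\tilde C_{n,m})$, I would choose a basepoint $x_0\in C_{n,m}$ in a collar of $\partial D^2$ where every $f_\beta$ is the identity, pick a basepoint $\tilde x_0$ over $x_0$, and define $\tilde f_\beta$ to be the unique lift sending $\tilde x_0$ to $\tilde x_0$. Because $f_{\beta_1}\circ f_{\beta_2}$ also fixes $x_0$, the uniqueness of such lifts forces $\tilde f_{\beta_1\beta_2}=\tilde f_{\beta_1}\circ\tilde f_{\beta_2}$, and boundary-relative isotopy of homeomorphisms yields a boundary-relative isotopy of lifts, so the induced maps on Borel--Moore homology $H^{\mathrm{lf}}_{m}(\tilde C_{n,m},\Z)$ depend only on the mapping-class $\beta$. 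This gives the braid group action.

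Finally I would check $\Z[x^{\pm},d^{\pm}]$-linearity. For any deck transformation $g$, the composition $\tilde f_\beta\circ g\circ\tilde f_\beta^{-1}$ is again a lift of $f_\beta\circ\mathrm{id}\circ f_\beta^{-1}=\mathrm{id}$, hence a deck transformation, and conjugation therefore gives an automorphism of the deck group $\Z\oplus\Z$. Under the identification of the deck group with the image of $\phi$, this automorphism equals the map induced by $(f_\beta^{(m)})_{*}$ on $\pi_1(C_{n,m})/\ker(\phi)$, which by step one is the identity. Hence $\tilde f_\beta$ commutes on the nose with every deck transformation, so the induced operator on $H^{\mathrm{lf}}_{m}(\tilde C_{n,m},\Z)$ commutes with multiplication by $x$ and $d$, yielding a $\Z[x^{\pm},d^{\pm}]$-linear action. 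The main delicate point is step one, where the invariance of $\phi$ under the mapping-class action rests on the fact that the augmentation $\epsilon$ is invariant under the permutation of punctures; if one had used $\rho$ itself rather than $\phi$ the statement would fail, which is why the collapsing of the first $n$ factors is essential.
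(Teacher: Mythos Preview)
Your proof is correct. The paper itself does not supply a proof of this proposition; it is stated as a known fact from the Lawrence-representation literature (see e.g.\ Lawrence, Bigelow, Kohno, Ito). Your argument is precisely the standard one: the key observation is that while $(f_\beta^{(m)})_*$ permutes the classes $\rho(\sigma_i)$ in $H_1(C_{n,m})$ according to $\pi(\beta)$, the augmentation $\epsilon$ erases this permutation, so $\phi$ is genuinely $B_n$-invariant and the rest follows from elementary covering-space theory. Your remark at the end---that the collapsing via $\epsilon$ is exactly what makes the deck action commute with the braid action, and that using $\rho$ alone would fail---is the essential point and is well identified.
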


\subsection{Lawrence representation}
The Lawrence representation will be defined as a certain subspace of the middle dimensional Borel-Moore homology of the covering $\tilde{C}_{n,m}$ described above.
We discuss this in the next part.
We start by reminding the following notations:
$$E_{n,m}=\{e=(e_1,...,e_{n-1})\in \N^n \mid e_1+...+e_{n-1}=m \}.$$
\begin{definition}({\bf Multiforks} \cite{Ito2}, \cite{Law1})\\
Let $d_1,...,d_m \in  \partial D_n$ which gives a base point $d=\{d_1,...,d_n\}$ in $C_{n,m}$.

{\bf{1) Submanifolds}}

For each partition $e \in E_{n,m}$, we will associate a submanifold in $\tilde{C}_{n,m}$ of dimension $m$, which will lead to a homology class in $H^{\text{lf}}_m(\tilde{C}_{n,m}, \Z)$. 
Let $i \in \{ 1,...,n-1 \} $, and consider $e_i$-horizontal segments in $D_n$, between the punctures $p_i$ and $p_{i+1}$, which are disjoint except meet at their boundaries. Denote this set of segments $$\{ I^{e}_1,...,I^{e}_{e_1},..., I^{e}_{m}\}.$$ 
$$I^{e}_{k}: (0,1)\rightarrow D_n, \forall k \in \{1,...,m\}. $$
Then, for $k \in \{ 1,...,m \} $, let us consider ${\gamma}^{e}_k$ to be a vertical path  between $d_k$ and a point from the segment $I^{e}_k$.
Since these segments are disjoint, then one can consider their product seen in the product of the disc $D_n$ minus the diagonal :
$$I^{e}_1\times...\times I^{e}_m: (0,1)^m\rightarrow D^m_n\setminus \Delta$$
Composing with the projection map 
$$\pi_m : D^m_n \setminus \{x=(x_1,...,x_n)| x_i=x_j\}) \rightarrow C_{n,m}$$
one gets a $m$-dimensional open ball embedded into the configuration space:
$$\F_{e}: \D^m(=(0,1)^m)\rightarrow C_{n,m}$$
$$\F_e=\pi_m \circ (I^{e}_1 \times ... \times I^{e}_{m}).$$

{\bf 2) Lifts to the covering space}\\
In the sequel we will see how to lift these open balls towards submanifolds in $\tilde{C}_{n,m}$. In order to do this, we use the paths $ \{ \gamma^e_k \}_{k\in \overline{1,m}}$ in order to construct certain base points.\\ 
Let us fix $\tilde{ \bf d} \in \pi^{-1} ({ \bf d}) $ a point in the covering space.
The union of the paths $\gamma^e_k$, from the segments to the base point $d$, give path in the configuration space.
Let $$\gamma^e := \pi_m \circ (\gamma^{e}_1, ..., \gamma^{e}_m ) : [0,1] \rightarrow C_{n,m}$$
Let us define $$\tilde{\gamma}^e:  [0,1]^m \rightarrow \tilde{C}_{n,m} $$  
to be the unique lift of $\gamma^e$ such that $\tilde{\gamma}^e(0)=\tilde{\bf d}$.

{\bf 3) Standard multiforks}\\ \label{f}
Let us consider $$\tilde{\F}_{e}: \D^m(=(0,1)^m)\rightarrow \tilde{C}_{n,m}$$ 
to be the uniqe lift of the ball $\F_e$ such that $\tilde{\gamma}^e(1) \in \tilde{\F}_{e}.$
This submanifold leads to a class in the Borel-Moore homology
$$[\tilde{\F}_{e}] \in H^{\text{lf}}_m(\tilde{C}_{n,m}, \Z)$$ called the multifork associated to the partition $e \in E_{n,m}$.
\end{definition}

\begin{definition}
Let us denote the following set:
$$\mathscr B_{\HH_{n,m}}:= \{  [\tilde{\F}_e] \ | \  e\in E_{n,m} .\}$$
Consider the subspace in the Borel-Moore homology generated by multiforks:
$$\HH_{n,m}:= <[\tilde{\F}_e] \ | \  e\in E_{n,m}>_{\Z[x^{\pm},d^{\pm}]} \subseteq H^{\text{lf}}_m(\tilde{C}_{n,m}, \Z)$$
It is known that the set $\mathscr B_{\HH_{n,m}}\subseteq H^{\text{lf}}_m(\tilde{C}_{n,m}, \Z)$ is linearly independent, and it gives a basis for $\HH_{n,m}$.
\end{definition}
\begin{proposition}(Lawrence representation)

The braid group action $B_n \curvearrowright H^{\text{lf}}_m(\tilde{C}_{n,m}, \Z) $ preserves the subspace $\HH_{n,m}$ and the basis $\mathscr B_{\HH_{n,m}}$ leads to a braid group representation:
$$l_{n,m}: B_n\rightarrow Aut(\HH_{n,m}, \Z[x^{\pm 1}, d^{\pm 1}])$$
called Lawrence representation. 
\end{proposition}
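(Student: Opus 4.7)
The plan is to verify the claim on the standard generators $\sigma_1,\ldots,\sigma_{n-1}$ of $B_n$ and then to use that the ambient $B_n$-action on $H^{\text{lf}}_m(\tilde{C}_{n,m},\Z)$ is already a group action by $\Z[x^{\pm},d^{\pm}]$-automorphisms. Fix $i\in\{1,\ldots,n-1\}$ and a partition $e=(e_1,\ldots,e_{n-1})\in E_{n,m}$. Represent $\sigma_i$ by a half-twist $h_i\in\mathrm{Homeo}^+(D_n,\partial)$ supported in a small disc $U_i\subseteq D_n$ whose intersection with the real axis is a neighborhood of $[p_i,p_{i+1}]$ and which contains no other puncture. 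Then $h_i$ acts as the identity on the horizontal arcs defining $\F_e$ indexed by $j\notin\{i-1,i,i+1\}$, so the computation reduces to the image of the three relevant bunches of arcs (the $e_{i-1}$ arcs between $p_{i-1}$ and $p_i$, the $e_i$ arcs between $p_i$ and $p_{i+1}$, and the $e_{i+1}$ arcs between $p_{i+1}$ and $p_{i+2}$) under $h_i$.

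I would then show that, in $C_{n,m}$, the image $h_i(\F_e)$ is compactly isotopic to a finite disjoint union of standard multiforks $\F_{e'}$, where $e'$ ranges over the partitions obtained from $e$ by redistributing the $e_i$ points of the middle bunch between positions $i-1,i,i+1$ (all other coordinates being equal to those of $e$). Concretely, each half-twisted arc in the middle bunch is cut at the new position of $p_i$ and $p_{i+1}$ and straightened; the combinatorics of how an arc chooses to pass around one of the two exchanged punctures produces the finitely many multiforks $\F_{e'}$ that appear in the decomposition.

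To obtain the coefficients in $\Z[x^{\pm 1},d^{\pm 1}]$, I would lift the isotopy to $\tilde{C}_{n,m}$ and compare the lift of $h_i(\F_e)$ with the canonical lift $\tilde{\F}_{e'}$ of each resulting piece. They differ by a deck transformation that is computed by evaluating the local system $\phi$ on the loop obtained by concatenating $\gamma^e$, the image under $h_i$ of the chosen base point path, and $(\gamma^{e'})^{-1}$. Since $\phi$ takes values in $\Z\oplus\Z=\langle x\rangle\oplus\langle d\rangle$, the deck factor is a monomial $x^a d^b$, where $a$ counts signed windings of strands around punctures and $b$ counts signed strand--strand crossings. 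Collecting these monomials gives the matrix of $l_{n,m}(\sigma_i)$ in the basis $\mathscr B_{\HH_{n,m}}$, proving invariance of the subspace $\HH_{n,m}$.

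The main obstacle is the bookkeeping for the exponents $(a,b)$: one must track orientation conventions, the positions of the normalising paths $\gamma^e_k$, and the precise way each strand threads around the two exchanged punctures during the half-twist. These computations recover the Lawrence matrices as in \cite{Law1} and in the presentation of Ito \cite{Ito2}. Once invariance and the explicit formulas on generators are established, the representation property $l_{n,m}(\beta_1\beta_2)=l_{n,m}(\beta_1)\,l_{n,m}(\beta_2)$ is automatic because it already holds at the level of the $B_n$-action on $H^{\text{lf}}_m(\tilde{C}_{n,m},\Z)$, and the chosen basis $\mathscr B_{\HH_{n,m}}$ is linearly independent over $\Z[x^{\pm},d^{\pm}]$.
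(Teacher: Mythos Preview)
The paper does not supply a proof of this proposition; it is stated as a known result, with the multifork formulation drawn from the literature (Lawrence \cite{Law1}, Bigelow \cite{Big2}, Ito \cite{Ito2}). Your strategy---checking invariance on the Artin generators $\sigma_i$ by realising each as a half-twist supported near $[p_i,p_{i+1}]$, following the image of $\F_e$, and then rewriting that image as a $\Z[x^{\pm},d^{\pm}]$-combination of standard multiforks via the fork relations in Borel--Moore homology---is precisely the standard argument by which this result is established in those references, so your approach is correct and aligned with what the paper is citing.

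One combinatorial slip worth correcting: after the half-twist $\sigma_i$, it is not the middle bunch of $e_i$ arcs that gets redistributed. Those arcs, joining $p_i$ to $p_{i+1}$, remain between the two swapped punctures (they pick up at most a monomial factor from the twist). It is the two \emph{adjacent} bunches---the $e_{i-1}$ arcs ending at $p_i$ and the $e_{i+1}$ arcs ending at $p_{i+1}$---whose relevant endpoints are dragged past a puncture by the half-twist. Each such arc now spans three consecutive positions and is cut, at the intermediate puncture, into a sum of two standard arcs; iterating this over the product gives the linear combination of multiforks $\tilde{\F}_{e'}$ with the deck monomials $x^a d^b$ you describe. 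With that adjustment your outline matches the computations in \cite{Law1} and \cite{Ito2}.
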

\subsection{Truncated Lawrence representation} \label{trunc}
On the quantum side, we have seen that the ADO invariant can be described through the representation of the braid group at roots of unity $\varphi_n^{\lambda}$, as in the equation \ref{defADO}. We will see in this section, that the Lawrence representation $\HH_{n,m}$ is a topological counterpart for the quantum representation $W^{q,\lambda}_{n,m}$, for generic $q,\lambda$. We will be interested in a specialisation of the quantum representation at $q$ root of unity $W^{\xi,\lambda}_{n,m}$. For this, we will use a result that has been shown in \cite{Ito2}, namely that a certain quotient of the Lawrence representation is a topological counterpart for the quantum representation of the braid group, at roots of unity . We will discuss this in details in the next part.

\begin{definition}
In the sequel, we use the following notation:
$$E^{N}_{n,m}=\{e \in E_{n,m} \mid e_i\leq N-1, \ \forall i \in \overline{1,n-1} \ \}.$$
$$E^{\geq N}_{n,m}=\{e \in E_{n,m} \mid \ \exists \ i \in \overline{1,n-1}, e_i> N-1\ \}.$$
 
Consider the subspace in the Borel-Moore homology of the covering space generated by the following elements:
$$\HH^{\geq N}_{n,m}:=<[\tilde{\F}_e] \ | \  e\in E^{\geq N}_{n,m}]>_{\Z[x^{\pm}, d^{\pm}]}\subseteq \HH_{n,m}$$
\end{definition}
We are interested in certain specialisations of the Lawrence representation.
\begin{definition} \label{coeffmorph}
For $\lambda, q \in \C$, let us consider the following ring morphism: 
$$\psi_{q,\lambda}: \Z[x^{\pm},d^{\pm}]\rightarrow \C$$ 
$$\psi_{q,\lambda}(x)= \qs^{-2 \lambda}; \ \ \psi_{\lambda}(d)=-\qs^{2}$$
\end{definition} 

\begin{proposition}(\cite{Ito2})
Let $N \in \N$ and $\xi=e^{\frac{2\pi i}{2N}}$. Let $\lambda \in \C$.
One has that:
$$\HH^{\geq N}_{n,m}|_{\psi_{\xi,\lambda}}\subseteq\HH_{n,m}|_{\psi_{\xi,\lambda}}$$
Then, the braid group action $l_{n,m}|_{\psi_{\xi,\lambda}}:B_n\rightarrow Aut(\HH_{n,m}|_{\psi_{\xi,\lambda}},\C)$ preserves the subspace $\HH^{\geq N}_{n,m}|_{\psi_{\xi,\lambda}}$.
\end{proposition}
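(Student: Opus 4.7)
The plan is to reduce the statement to an explicit local computation on the Lawrence braid action and to identify a factor of $[N]_q$ inside the ``leakage'' coefficients, which vanishes at the specialization $q=\xi$. The inclusion $\HH^{\geq N}_{n,m}|_{\psi_{\xi,\lambda}}\subseteq \HH_{n,m}|_{\psi_{\xi,\lambda}}$ is immediate, since tensoring an inclusion of $\Z[x^{\pm},d^{\pm}]$-modules with $\C$ along $\psi_{\xi,\lambda}$ preserves inclusions of $\C$-vector spaces. For the braid invariance I would reduce to generators: since $B_n$ is generated by $\sigma_1,\dots,\sigma_{n-1}$, and each $l_{n,m}(\sigma_i)$ is an automorphism of the finite-dimensional $\C$-vector space $\HH_{n,m}|_{\psi_{\xi,\lambda}}$, it suffices to show that every generator sends $\HH^{\geq N}_{n,m}|_{\psi_{\xi,\lambda}}$ into itself; invertibility then forces $l_{n,m}(\sigma_i^{-1})|_{\psi_{\xi,\lambda}}$ to preserve the subspace as well.

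Expanding the braid action on the multifork basis as
\begin{equation*}
l_{n,m}(\sigma_i)[\tilde{\F}_e]\;=\;\sum_{f\in E_{n,m}}c^{\sigma_i}_{e,f}(x,d)\,[\tilde{\F}_f],
\end{equation*}
the task becomes showing that $\psi_{\xi,\lambda}\bigl(c^{\sigma_i}_{e,f}\bigr)=0$ whenever $e\in E^{\geq N}_{n,m}$ and $f\in E^{N}_{n,m}$. The next step is to exploit locality: a representative of $\sigma_i$ is supported in a disc containing only the punctures $p_i$ and $p_{i+1}$, so $c^{\sigma_i}_{e,f}=0$ unless $e_k=f_k$ for every $k\notin\{i-1,i,i+1\}$ and $e_{i-1}+e_i+e_{i+1}=f_{i-1}+f_i+f_{i+1}$. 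This reduces the verification to a $3$-puncture local model, whose transition coefficients admit, following Lawrence \cite{Law} and Ito \cite{Ito2}, explicit formulas as products of $q$-binomial polynomials in $x=q^{-2\lambda}$ and $d=-q^{2}$.

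The main step is to pinpoint the vanishing factor. When the source has some $e_j\geq N$ but the target has all $f_k\leq N-1$, the combinatorial redistribution of points across the crossing must break a cluster of size $e_j\geq N$ into pieces each of size $<N$; such a splitting contributes a $q$-binomial coefficient $\binom{e_j}{a}_q$ with $0<a<N\leq e_j$ as a factor of $c^{\sigma_i}_{e,f}$, and hence brings the quantum integer $[N]_q$ into the numerator. Since $\xi=e^{i\pi/N}$ satisfies $\xi^{N}=-1$, one has $[N]_\xi=(\xi^{N}-\xi^{-N})/(\xi-\xi^{-1})=0$, so $\psi_{\xi,\lambda}$ annihilates the whole offending coefficient, exactly ruling out the leakage.

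The main obstacle is the bookkeeping needed to isolate the $[N]_q$ factor inside Lawrence's local braid formulas; this combinatorial core is essentially the computation already carried out by Ito in \cite{Ito2}. A parallel, more conceptual route I would keep in mind uses the correspondence between the Lawrence representation and a suitable tensor power of Verma modules developed in Section \ref{4}: under this identification, $\HH^{\geq N}_{n,m}$ matches the subspace spanned by weight vectors containing at least one tensor factor of index $\geq N$, and the vanishing $[N]_\xi=0$ translates into the fact that $U_N^{\lambda}\subseteq\hat V_\lambda$ is a $U_\xi(sl(2))$-subrepresentation at $q=\xi$, giving the same invariance by descent.
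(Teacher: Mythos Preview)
The paper does not prove this proposition; it is quoted from \cite{Ito2} and used as input for the subsequent construction. So there is no in-paper proof to compare against, only Ito's original argument.

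Your proposal has the right shape but two places need tightening. First, the inclusion step: tensoring does \emph{not} in general preserve injections of modules (take $2\Z \hookrightarrow \Z$ and tensor with $\Z/2$). The reason the inclusion survives here is specific: $\HH^{\geq N}_{n,m}$ is spanned by a \emph{subset} of the free basis $\{[\tilde\F_e]\}_{e\in E_{n,m}}$ of $\HH_{n,m}$, hence is a direct summand, and direct summands are preserved under any base change. You should say this rather than appeal to a false general principle.

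Second, and more serious, the $q$-binomial step is not correct as written. You claim that breaking a cluster of size $e_j\geq N$ into pieces all $<N$ produces a factor $\binom{e_j}{a}_q$ with $0<a<N\leq e_j$ carrying $[N]_q$ in its numerator. But $[N]_q$ appears among $[e_j]_q,\dots,[e_j-a+1]_q$ only when $e_j-a<N$; for instance $\binom{N+1}{1}_\xi=[N+1]_\xi=-1\neq0$. Your heuristic works when the large cluster is split into exactly two pieces both below $N$, but the local Lawrence formula for $\sigma_i$ redistributes among \emph{three} adjacent indices $e_{i-1},e_i,e_{i+1}$, and the transition coefficient is not a single binomial. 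You yourself concede that isolating the vanishing factor is ``the computation already carried out by Ito''; as it stands the proposal identifies the mechanism ($[N]_\xi=0$) but does not close the argument.

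Your alternative route via the Verma-module correspondence is the right conceptual picture, but beware circularity: Kohno's identification in Section~\ref{4} is stated for generic $(q,\lambda)$, and Ito's root-of-unity version (the theorem you would want to invoke) is formulated for the truncated representation $\HH^N_{n,m}$, whose very definition as a $B_n$-module rests on the present proposition.
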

\begin{definition}(Truncated Lawrence representation \cite{Ito2})
Let us consider the following quotient space of $\Z[x^{\pm1},d^{\pm1}]$-modules:
$$\HH^{N}_{n,m}:=\HH_{n,m}/ \HH^{\geq N}_{n,m}.$$
Then, the braid group action $l_{n,m}$, leads to a well defined braid group action on the quotient, when specialised at roots of unity using $\psi_{\xi, \lambda}$:
$$l^{N}_{n,m}=l_{n,m}|_{\psi_{\xi,\lambda}}: B_n \rightarrow Aut\left( \HH^{N}_{n,m}|_{\psi_{\xi,\lambda}},\C \right)$$
called truncated Lawrence representation.
\end{definition}
\section{Version of Lawrence representation}\label{3'}

So far, we have seen a good topological counterpart, which corresponds to highest weight space representations at roots of unity. We remind our purpose which is to describe the ADO invariant in a topological manner. The next step is to relate this topological counterpart with a dual homology of the covering space and pair them in a topological intersection form. 

Here there is a subtle point that we would like to emphasise. In \cite{Cr}, it was shown a topological model for the coloured Jones polynomials, using topological intersection pairings. More precisely, we used the Lawrence representation from section \ref{3} on one side and a "dual" Lawrence representation on the other, defined using certain homology classes called bar codes defined in the middle dimensional homology of the covering space $\tilde{C}_{n,m}$ relative to its boundary. 
Then, these two spaces were related by a duality which we showed to be non-degenerate over the polynomial ring $\Z[x^{\pm},d^{d^{\pm}}]$. The matrix of this form is diagonal with non-trivial polynomials in the variable $d$ on its diagonal. 

The specialisation $\psi_{q,\lambda}$ with $q$ variable and $\lambda\in \N$ would preserve the non-degeneracy of this intersection form, and this lead to the topological model for the coloured Jones polynomial $J_N(L,q)$.
Regarding this specialisation phenomenon, for the ADO case we have a small issue. We want to specialise using $\psi_{\xi, \lambda}$ at a root of unity $q=\xi=e^{\frac{2 \pi i}{2N}}$. This will corespond to the specialisation where $d=-\xi^{2}$. The non-degeneracy of the specialisation of the topological intersection pairing would be a question in this case. 

Then, our strategy will be to use a slightly different version of the Lawrence representation, that we introduce in this section. The idea is to split the infinity of the Borel-Moore part into two sets- the first part, which contains the points at infinity which approach a puncture, whereas the second part contains points at infinity that approach the diagonal $\Delta$ in the configuration space, in other words which are close to collide. Then, we use the homology relative to the first part of the infinity, responsible for the punctures and define the Lawrence representation as a certain subspace of this homology space. The second step is to define a dual version of it, using a certain subspace in the homology relative to the boundary and to the second part of infinity, which is responsible for collisions. This subspace will be generated by classes given by submanifolds called scan codes. 

The advantage will be that the topological pairing over the ring in two variables $\Z[x^{\pm},d^{\pm}]$ will be diagonal and its matrix will be the identity. In other words, we will have no problem concerning the degeneracy of this intersection form specialised at any parameters.
\subsection{Special Lawrence representation}
We will begin with the definition of the version of the Lawrence representation that we will use.
Consider as above the covering space $\tilde{C}_{n,m}$ of the configuration space in the punctured disc. In \cite{CM}, it was discussed in details this version of splitting the infinity part from the Borel-Moore homology into two separate parts. We will keep the same notation and consider 
$$H^{lf,\infty}_{m}(\tilde{C}_{n,m},\Z)$$ to be the homology relative to the infinity that encodes the boundary with punctures from the configuration space.  
\begin{definition}(Special multiforks)

Let $e \in E_{n,m}$ and consider the geometric submanifold as in \ref{f}:
$$\tilde{\F}_e\subseteq \tilde{C}_{n,m} $$
Then, it will lead to a well defined homology class:
$$[[\tilde{\F}_e]] \in H^{lf,\infty}_{m}(\tilde{C}_{n,m},\Z).$$ 
\end{definition}
\begin{proposition}(\cite{CM})\label{P:20}
Consider the following set:
$$\mathscr B_{\HH^{\infty}_{n,m}}:= \{  [\tilde{\F}_e] \ | \  e\in E_{n,m} .\}$$
Let the following subspace in the Borel-Moore homology related to the punctures, generated by special multiforks:
$$\HH^{\infty}_{n,m}:= <[\tilde{\F}_e] \ | \  e\in E_{n,m}>_{\Z[x^{\pm},d^{\pm}]} \subseteq H^{\text{lf},\infty}_m(\tilde{C}_{n,m}, \Z).$$
Then, the set $\mathscr B_{\HH^{\infty}_{n,m}}\subseteq H^{\text{lf},\infty}_m(\tilde{C}_{n,m}, \Z)$ is a basis for the homology space $\HH^{\infty}_{n,m}$.
\end{proposition}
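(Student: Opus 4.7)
The plan is to establish linear independence (generation is tautological from the definition) via a dual-basis computation using the classes $\tilde{\D}_f$ already anticipated in Section \ref{3'}. These dual classes live in the homology group $H^{lf,\Delta}_m(\tilde{C}_{n,m}, \Z, \partial)$ that is relative to the diagonal part of infinity and to the open boundary of the disc. The essential preliminary point, which I would justify first, is that the splitting of the boundary of $C_{n,m}$ between the puncture hyperplanes (encoded by $H^{lf,\infty}$) and the diagonal (encoded by $H^{lf,\Delta}$) ensures that the supports of classes in the two groups are disjoint at infinity, so a geometric transverse intersection of representatives produces a well-defined bilinear form
$$\ll \cdot , \cdot \gg : H^{lf,\infty}_m(\tilde{C}_{n,m}, \Z) \otimes H^{lf,\Delta}_m(\tilde{C}_{n,m}, \Z, \partial) \to \Z[x^{\pm}, d^{\pm}].$$

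Next, for each $f \in E_{n,m}$ I build $\tilde{\D}_f$ as follows: pick $f_i$ disjoint vertical open arcs in the strip between consecutive punctures $p_i$ and $p_{i+1}$, each running from the horizontal diameter up to $\partial \D^2$; take their product in $D_n^m$, descend to the symmetric quotient, and lift to $\tilde{C}_{n,m}$ using the same kind of bridging path from the fixed base point $\tilde{\mathbf{d}}$ that was used for $\tilde{\F}_e$ in \ref{f}. The resulting class $[[\tilde{\D}_f]]$ plays the role of a dual multifork.

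The core of the argument is the computation $\ll [[\tilde{\F}_e]], [[\tilde{\D}_f]] \gg = \delta_{e,f}$. Project both submanifolds to the base: $\F_e$ consists of $e_i$ horizontal arcs between $p_i$ and $p_{i+1}$, while $\D_f$ consists of $f_j$ vertical arcs in the corresponding strips. If some $e_i \neq f_i$ the matching of particles per strip fails, so the projected intersection in the symmetric configuration space is empty and the pairing vanishes. If $e = f$, the two projections meet transversally in exactly one configuration: in the $i^\text{th}$ strip the $e_i$ horizontals and $e_i$ verticals cross in a grid, but after unordering there is precisely one matching that defines a configuration point. I then need to check the covering-space lift: the chosen paths $\gamma^e$ and their analogues for $\tilde{\D}_e$ are adapted so that this unique base crossing lifts to a single transverse intersection in $\tilde{C}_{n,m}$ with deck-transformation coefficient $1$, and no other $\Z[x^{\pm},d^{\pm}]$-translate of $\tilde{\D}_e$ meets $\tilde{\F}_e$.

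Once the pairing matrix against $\{[[\tilde{\D}_f]]\}_f$ is the identity, linear independence of $\{[[\tilde{\F}_e]]\}_e$ follows immediately: any relation $\sum_e P_e(x,d)\,[[\tilde{\F}_e]] = 0$ paired with $[[\tilde{\D}_f]]$ gives $P_f = 0$. The main obstacle I expect is the monodromy bookkeeping in the covering lift. Specifically, one has to verify that no nontrivial deck translate of $\tilde{\D}_f$ meets $\tilde{\F}_e$; this amounts to showing that any loop in $C_{n,m}$ that would close up a would-be second intersection must wind around a puncture or exchange two particles, picking up a nontrivial power of $x$ or $d$ under the local system $\phi$, and therefore cannot close in $\tilde{C}_{n,m}$. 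This reduction ultimately rests on the explicit combinatorial placement of the horizontal and vertical arcs strip-by-strip.
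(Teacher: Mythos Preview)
Your overall strategy is exactly the one the paper uses: construct dual classes, show the intersection pairing is the identity on generators, and invoke the elementary linear-algebra remark (this is precisely Remark~\ref{R:bases} and the Corollary following it in Section~\ref{4'}). So the architecture is right.

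There is, however, a concrete error in your construction of the dual classes which invalidates the claimed computation $\ll [[\tilde{\F}_e]],[[\tilde{\D}_f]]\gg=\delta_{e,f}$. You take $f_i$ \emph{disjoint} vertical arcs in the $i^{\text{th}}$ strip and form their product. With that model, in the strip where $e_i=f_i$ the $e_i$ horizontal arcs of $\F_e$ and your $e_i$ vertical arcs meet in an $e_i\times e_i$ grid, and an unordered configuration lying in both submanifolds corresponds to the graph of a permutation of $\{1,\dots,e_i\}$. Hence there are $e_i!$ intersection points in that strip, and $\prod_i e_i!$ in total, not one; after accounting for the local system the diagonal entry becomes a $d$-deformed factorial rather than $1$. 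Linear independence would still follow (the matrix is diagonal with nonzero polynomial entries), but your stated computation is wrong as written.

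The paper avoids this by taking, in each strip, a \emph{single} vertical segment $J_i$ and using the unordered configuration space $\mathrm{Conf}_{f_i}(J_i)$ on it (Definition~\ref{D:21}). Then the $e_i$ horizontal arcs of $\F_e$ meet $J_i$ in exactly $e_i$ points, which determine a \emph{unique} unordered $e_i$-tuple on $J_i$; the intersection in the base is a single transverse point, and the pairing is genuinely $\delta_{e,f}$. This choice is also what forces the dual to live in $H^{lf,\Delta}_m(\tilde{C}_{n,m},\partial;\Z)$: on a single segment the points may collide, so one must allow the diagonal at infinity. With your disjoint-arcs model no collisions occur, and the natural target homology is different.
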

Concerning the braid group action, this behaves well with respect to this version of homology. 
\begin{proposition}(Special Lawrence representation)\\
The braid group will induce a well defined action on the version of Borel-Moore homology relative to infinity, which leads to a representation using the basis $\mathscr B_{\HH^{\infty}_{n,m}}$:
$$l^{\infty}_{n,m}:B_n\rightarrow Aut(\HH^{\infty}_{n,m},\Z[x^{\pm 1}, d^{\pm 1}]).$$
We call this action special Lawrence representation.
\end{proposition}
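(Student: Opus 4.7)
The plan is to deduce this from the analogous statement for the ordinary Lawrence representation $l_{n,m}$ on $\HH_{n,m}$, by constructing a natural comparison map between the two Borel–Moore homologies that intertwines the two braid group actions. First I would observe that a braid $\beta \in B_n = MCG(D_n, \partial D_n)$ is represented by a homeomorphism of $D_n$ that fixes $\partial D_n$ pointwise and permutes the punctures as a set. The induced homeomorphism of $C_{n,m}$ respects the stratification of the boundary used in \cite{CM}: the "puncture part" of the boundary (where a configuration point approaches some $p_i$) is preserved because the punctures are permuted, and the "collision part" is preserved because any self-homeomorphism sends the diagonal $\Delta$ to $\Delta$. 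This compatibility is exactly what is needed so that the induced action on $H^{lf}_m(\tilde C_{n,m},\Z)$ descends to the version $H^{lf,\infty}_m(\tilde C_{n,m},\Z)$ relative to puncture-infinity.

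Next I would check that the lift to the covering is $\Z[x^{\pm}, d^{\pm}]$-linear. The local system $\phi: \pi_1(C_{n,m}) \to \Z \oplus \Z$ is invariant under the $B_n$-action on $\pi_1(C_{n,m})$, because all loops around punctures have the same $x$-weight (so the augmentation is unchanged under any permutation of the $\sigma_i$ and under conjugation), and the swap generator $\delta$ is preserved up to conjugation as well. Hence the homeomorphism lifts, after fixing the basepoint $\tilde{\mathbf d}$, to a deck-transformation-equivariant self-homeomorphism of $\tilde C_{n,m}$, inducing the desired $\Z[x^{\pm}, d^{\pm}]$-linear action on $H^{lf,\infty}_m(\tilde C_{n,m},\Z)$.

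Finally, the long exact sequence of the pair (total space, puncture-infinity) produces a natural map $q: H^{lf}_m(\tilde C_{n,m},\Z) \to H^{lf,\infty}_m(\tilde C_{n,m},\Z)$, which is $B_n$-equivariant by functoriality. By construction of the two kinds of multifork class, $q([\tilde\F_e]) = [[\tilde\F_e]]$ for every $e \in E_{n,m}$. Since the previous proposition asserts that $\HH_{n,m}$ is preserved by $l_{n,m}$, the image $q(\HH_{n,m}) = \HH^{\infty}_{n,m}$ is preserved by the induced action; expressing it in the basis $\mathscr B_{\HH^{\infty}_{n,m}}$ of Proposition \ref{P:20} yields the homomorphism $l^{\infty}_{n,m}$.

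I expect the main obstacle to be the first step, namely, rigorously showing that the $B_n$-action extends compatibly to the partial compactification that defines $H^{lf,\infty}_m$, and does so equivariantly under the splitting of the boundary into puncture-type and collision-type strata. Once this is established, everything else is formal: naturality of $q$ together with preservation of $\HH_{n,m}$ by the ordinary Lawrence representation forces preservation of $\HH^{\infty}_{n,m}$ and delivers the representation $l^{\infty}_{n,m}$ in the stated form.
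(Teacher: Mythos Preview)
The paper does not prove this proposition; it is stated as a fact, with the construction of $H^{lf,\infty}_m$ and the verification of the braid action both deferred to \cite{CM}. Your first two steps (that a mapping-class homeomorphism preserves the puncture/collision stratification of the boundary at infinity, and that the lift to $\tilde C_{n,m}$ is deck-equivariant because $\phi$ is $B_n$-invariant) are exactly the right content and match the spirit of what \cite{CM} does.

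The issue is your third step. The comparison map between the two Borel--Moore homologies goes the other way: in the paper's conventions (see the Lemma stated immediately after this Proposition) one has
\[
\iota:\; H^{lf,\infty}_m(\tilde C_{n,m},\Z)\;\longrightarrow\; H^{lf}_m(\tilde C_{n,m},\Z),\qquad \iota\big([[\tilde\F_e]]\big)=[\tilde\F_e],
\]
and this is the natural direction: a chain that is locally finite only towards the puncture part of infinity is in particular locally finite towards all of infinity. Your proposed map $q: H^{lf}_m\to H^{lf,\infty}_m$ does not arise from the long exact sequence you invoke; the long exact sequence of the relevant triple produces $\iota$, not its reverse, and there is no reason a class in $H^{lf}_m$ that runs off to the collision stratum should define a class in $H^{lf,\infty}_m$.

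Your strategy is easily repaired by using $\iota$ instead. The map $\iota$ is $B_n$-equivariant by the same naturality argument you gave, it is injective (as recorded in the paper's Lemma), and it sends the generating set $\{[[\tilde\F_e]]\}$ bijectively onto $\{[\tilde\F_e]\}$. Since $\HH_{n,m}=\iota(\HH^{\infty}_{n,m})$ is $B_n$-stable by the ordinary Lawrence representation, injectivity and equivariance of $\iota$ force $\HH^{\infty}_{n,m}$ to be $B_n$-stable as well, and writing the action in the basis $\mathscr B_{\HH^{\infty}_{n,m}}$ gives $l^{\infty}_{n,m}$. So the overall architecture of your argument is correct; only the direction (and hence the existence) of the comparison map needs to be fixed.
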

In \cite{CM} it has been discussed the relationship between the two previous versions of the Lawrence representation, and it has been shown the following connection.
\begin{lemma}
Let us consider the map induced by the inclusion at the level of Borel-Moore homology:
$$\iota: H^{lf,\infty}_{m}(\tilde{C}_{n,m}, \Z)\rightarrow H^{lf}_{m}(\tilde{C}_{n,m}, \Z).$$
Then one has that $\iota$ is an injective function which sends a special multifork in its correspondent multifork:
$$\iota([[\tilde{\F}_e]])=[\tilde{\F}_e].$$
Moreover, the inclusion $\iota$ is equivariant with respect to the braid group actions
$l^{\infty}_{n,m}$ and $l_{n,m}$.
\end{lemma}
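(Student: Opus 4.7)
The plan is to verify the three claims separately, starting with the concrete identification of classes and then deducing injectivity from the basis structures already established in Proposition \ref{P:20} and the corresponding statement for $\HH_{n,m}$.

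First, for the identification $\iota([[\tilde{\F}_e]])=[\tilde{\F}_e]$, I would observe that the submanifold $\tilde{\F}_e$ is the unique lift of the embedded $m$-disc $\F_e = \pi_m \circ (I^{e}_1 \times \cdots \times I^{e}_m)$, where the arcs $I^{e}_k$ are chosen to be mutually disjoint horizontal segments between consecutive punctures. Because these arcs are disjoint, the image of $\F_e$ has closure inside the symmetric power $\operatorname{Sym}^m(\DD)$ that may accumulate on the puncture hyperplanes but is bounded away from the diagonal. Consequently $\tilde{\F}_e$ is already locally finite in the stronger sense required for $H^{lf,\infty}_m$ (support controlled away from the collision part $H^{\Delta}$), and the natural map of chain complexes that forgets this extra control sends the chain of $\tilde{\F}_e$ to itself, whence the formula.

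Second, for injectivity of $\iota$ on the special Lawrence subspace: by Proposition \ref{P:20} the family $\{[[\tilde{\F}_e]]\}_{e\in E_{n,m}}$ is a $\Z[x^{\pm},d^{\pm}]$-basis of $\HH^{\infty}_{n,m}$. Under $\iota$ these elements are sent to $\{[\tilde{\F}_e]\}_{e\in E_{n,m}}$, and this set is known to be linearly independent in $H^{lf}_m(\tilde{C}_{n,m},\Z)$ since it is a basis of $\HH_{n,m}$. Thus $\iota$ carries a basis to a linearly independent family, so its restriction to $\HH^{\infty}_{n,m}$ is injective.

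Finally, for equivariance: the braid group $B_n = MCG(D_n,\partial)$ acts by homeomorphisms of $D_n$ fixing $\partial D_n$ pointwise and permuting the punctures. These induce homeomorphisms of $C_{n,m}$ that preserve the stratification of the boundary of the symmetric compactification: the puncture hyperplanes are permuted among themselves, while the diagonal $\Delta$ is preserved, since disjointness of points is preserved by any homeomorphism. After lifting to the covering $\tilde{C}_{n,m}$ (which is compatible with the local system $\varphi$), the induced $B_n$-action preserves both the filtration defining $H^{lf,\infty}_m$ and the full Borel-Moore structure defining $H^{lf}_m$. The map $\iota$ is the forgetful map at the chain level, so it commutes with these self-homeomorphisms, yielding the equivariance of $\iota$ relative to $l^{\infty}_{n,m}$ and $l_{n,m}$.

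The main obstacle I anticipate is making rigorous the assertion in step one that $\F_e$ ``stays away from the collision part of the boundary''; this requires working inside the symmetric compactification of \cite{CM} and checking that the strong local finiteness of $\tilde{\F}_e$ genuinely lands in the subcomplex computing $H^{lf,\infty}_m$. Once this foundational point is in place, the identification, the injectivity on $\HH^{\infty}_{n,m}$, and the braid-equivariance all follow in a very direct manner from constructions already given in the paper.
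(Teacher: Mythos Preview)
The paper does not actually prove this lemma: it is stated immediately after the sentence ``In \cite{CM} it has been discussed the relationship between the two previous versions of the Lawrence representation, and it has been shown the following connection'', and no argument is given in the present text. So there is no in-paper proof to compare against; your proposal supplies exactly the kind of argument the paper defers to \cite{CM}.

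On the substance of your argument: parts one and three are sound. The multifork $\F_e$ is built from pairwise disjoint arcs, so its closure in the symmetric compactification meets only the puncture hyperplanes and not the diagonal stratum; hence the same geometric chain represents both $[[\tilde{\F}_e]]$ and $[\tilde{\F}_e]$, and the forgetful chain map realises $\iota$. Likewise, the braid action is induced by homeomorphisms of $D_n$ that permute punctures and preserve disjointness of configuration points, so both pieces of the boundary-at-infinity are preserved and the forgetful map is equivariant.

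There is one point to flag in part two. You prove injectivity of $\iota$ only on the subspace $\HH^{\infty}_{n,m}$ spanned by special multiforks, by observing that a basis goes to a linearly independent set. The lemma as stated asserts injectivity on the full group $H^{lf,\infty}_m(\tilde{C}_{n,m},\Z)$. For every subsequent use in this paper (the Remark and Lemma that follow, and Corollary \ref{truncident}) only the restricted injectivity on $\HH^{\infty}_{n,m}$ and its specialisations is needed, so your argument suffices for the applications here. If you want the full statement, you would need either that the special multiforks already generate $H^{lf,\infty}_m$ over $\Z[x^{\pm},d^{\pm}]$, or an independent duality argument from \cite{CM}; you should make explicit which of these you are invoking, or else weaken the conclusion to injectivity on $\HH^{\infty}_{n,m}$.
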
  
\subsection{Truncated special Lawrence representation}
Regarding the discussion from \ref{trunc},  we have in mind quantum representations at roots of unity, which, as we have seen, correspond to the truncated versions of the Lawrence representation. For this purpose, we define truncated versions of the special Lawrence representation as follows.
\begin{definition}(Truncated special Lawrence representation)\\ \label{D:20}
Let us consider the following subspace in the special Lawrence representation:
$$\HH^{\infty, \geq N}_{n,m}:=<[[\tilde{\F}_e]] \ | \  e\in E^{\geq N}_{n,m}]>_{\Z[x^{\pm}, d^{\pm}]}\subseteq \HH^{\infty}_{n,m}.$$
\end{definition}
For the next part, we discuss the specialisation of this representation using the function $\psi_{q,\lambda}$. 
\begin{remark}
Let $q,\lambda \in \C$ and $\psi_{q,\lambda}:\Z[x^{\pm 1},d^{\pm 1}]\rightarrow \C$ the morphism of rings of coefficients from the definition \ref{coeffmorph}.
Consider the map induced at the level of specialised Lawrence prepresentations:
$$\iota|_{\psi_{q,\lambda}}:\HH^{\infty}_{n,m}|_{\psi_{q,\lambda}}\rightarrow \HH_{n,m}|_{\psi_{q,\lambda}}.$$
Then $\iota|_{\psi_{q,\lambda}}$ is injective.
\end{remark}
\begin{lemma}
Let $N \in \N$ and $\xi=e^{\frac{2\pi i}{2N}}$. Let $\lambda \in \C$.
One has that:
$$\HH^{\infty, \geq N}_{n,m}|_{\psi_{\xi,\lambda}}\subseteq\HH^{\infty}_{n,m}|_{\psi_{\xi,\lambda}}$$
Then, the special Lawrence representation $$l_{n,m}|_{\psi_{\xi,\lambda}}:B_n\rightarrow Aut(\HH^{\infty}_{n,m}|_{\psi_{\xi,\lambda}}\C)$$ preserves the subspace $\HH^{\infty,\geq N}_{n,m}|_{\psi_{\xi,\lambda}}$.
\end{lemma}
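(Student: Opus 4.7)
The plan is to reduce this statement to the analogous preservation property for the classical (non-special) Lawrence representation, stated just above in the Proposition attributed to Ito, by transferring it through the braid-equivariant injection $\iota|_{\psi_{\xi,\lambda}}:\HH^{\infty}_{n,m}|_{\psi_{\xi,\lambda}}\hookrightarrow \HH_{n,m}|_{\psi_{\xi,\lambda}}$ introduced in the remark preceding the lemma. Since $\iota$ is defined on basis vectors by $\iota([[\tilde{\F}_e]])=[\tilde{\F}_e]$, it automatically restricts to an injection $\HH^{\infty,\geq N}_{n,m}|_{\psi_{\xi,\lambda}}\hookrightarrow \HH^{\geq N}_{n,m}|_{\psi_{\xi,\lambda}}$, so the quoted result will give invariance on the image, and the task is to pull this invariance back along the injection.

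First I would establish the inclusion $\HH^{\infty,\geq N}_{n,m}|_{\psi_{\xi,\lambda}}\subseteq \HH^{\infty}_{n,m}|_{\psi_{\xi,\lambda}}$ as a direct consequence of $E^{\geq N}_{n,m}\subseteq E_{n,m}$ combined with the freeness of $\HH^{\infty}_{n,m}$ over $\Z[x^{\pm},d^{\pm}]$ on the basis $\mathscr{B}_{\HH^{\infty}_{n,m}}$ provided by Proposition \ref{P:20}: freeness ensures the specialised generators remain linearly independent, so the inclusion of sub-bases descends to an honest inclusion of specialisations. Next I would fix $\beta\in B_n$ and $v\in \HH^{\infty,\geq N}_{n,m}|_{\psi_{\xi,\lambda}}$, set $w:=l^{\infty}_{n,m}(\beta)\cdot v$, and use equivariance of $\iota|_{\psi_{\xi,\lambda}}$ together with Ito's Proposition to obtain
\[
\iota|_{\psi_{\xi,\lambda}}(w)=l_{n,m}(\beta)\cdot \iota|_{\psi_{\xi,\lambda}}(v)\in \HH^{\geq N}_{n,m}|_{\psi_{\xi,\lambda}}.
\]

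It then remains to see that $w$ itself lies in $\HH^{\infty,\geq N}_{n,m}|_{\psi_{\xi,\lambda}}$. Expanding $w=\sum_{e\in E_{n,m}}c_e\,[[\tilde{\F}_e]]$ in the specialised basis and applying $\iota|_{\psi_{\xi,\lambda}}$ produces $\sum_{e\in E_{n,m}}c_e\,[\tilde{\F}_e]$; since $\HH_{n,m}$ is free on $\mathscr{B}_{\HH_{n,m}}$, the family $\{[\tilde{\F}_e]\}_{e\in E_{n,m}}$ remains $\C$-linearly independent after specialisation along $\psi_{\xi,\lambda}$, while $\HH^{\geq N}_{n,m}|_{\psi_{\xi,\lambda}}$ is precisely the $\C$-span of the sub-basis indexed by $E^{\geq N}_{n,m}$, so membership in that span forces $c_e=0$ for every $e\notin E^{\geq N}_{n,m}$, whence $w\in \HH^{\infty,\geq N}_{n,m}|_{\psi_{\xi,\lambda}}$. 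The main obstacle is exactly this basis-comparison step at the root of unity: one must be sure that no spurious $\Z[x^{\pm},d^{\pm}]$-relations among multiforks appear under $\psi_{\xi,\lambda}$, and that the specialised truncation subspace coincides with the $\C$-span of the expected sub-basis. Both properties follow from the freeness of the two Lawrence modules over $\Z[x^{\pm},d^{\pm}]$, and it is precisely this freeness that makes the transfer of invariance through $\iota$ clean.
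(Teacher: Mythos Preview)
Your proposal is correct and follows essentially the same route as the paper: transfer Ito's invariance result for $\HH^{\geq N}_{n,m}|_{\psi_{\xi,\lambda}}$ through the braid-equivariant injection $\iota|_{\psi_{\xi,\lambda}}$. The paper's proof is just the one-line assertion that the statement ``follows from the injectivity of $\iota|_{\psi_{\xi,\lambda}}$ and the invariance of the subspace $\HH^{\geq N}_{n,m}|_{\psi_{\xi,\lambda}}$ through the braid group action,'' together with the commutative diagram; your version spells out the basis-comparison step that makes the pull-back along $\iota|_{\psi_{\xi,\lambda}}$ work, which the paper leaves implicit.
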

\begin{proof}
This follows from the injectivity of $\iota|_{\psi_{\xi,\lambda}}$ and the invariance of the subspace $\HH^{\geq N}_{n,m}|_{\psi_{\xi,\lambda}}$ through the braid group action.
\begin{center}
\begin{tikzpicture}
[x=1.2mm,y=1.4mm]

\node (b1)  [color=blue]             at (0,0)    {$\HH^{\infty,\geq N}_{n,m}|_{\psi_{\xi,\lambda}}$};
\node (b2) [color=black] at (0,10)   {$\HH^{\infty}_{n,m}|_{\psi_{\xi,\lambda}}$};
\node (b3)  [color=orange]             at (-4,16.5)   {\large $B_n$};
\node (t1) [color=blue] at (30,0)  {$\HH^{\geq N}_{n,m}|_{\psi_{\xi,\lambda}}$};
\node (t2)  [color=black]             at (30,10)    {$\HH_{n,m}|_{\psi_{\xi,\lambda}}$};
\node (t3) [color=orange] at (28,16.5)   {\large $B_n$};
\node (c1) [color=black] at (-2,13)   {$\curvearrowright$};
\node (c2) [color=black] at (30,13)   {$\curvearrowright$};
\node (d1) [color=orange] at (5,15)   {$l^{\infty}_{n,m}|_{\psi_{N-1}}$};
\node (d2) [color=orange] at (37,15)   {$l_{n,m}|_{\psi_{N-1}}$};

\draw[->,color=black]             (b2)      to node[left,xshift=6mm,yshift=3mm, font=\small]{$ \iota|_{\psi_{\xi,\lambda}} $}                           (t2);
\draw[->,color=blue] (b1)      to node[left,xshift=6mm,yshift=3mm, font=\small]{$ \iota|_{\psi_{\xi,\lambda}} $}                           (t1);
\draw[->,color=blue]   (b1)      to node[right,font=\small]{$$}                           (b2);
\draw[->,color=blue]  (t1)      to node[right,font=\small]{$$}                           (t2);

\

\end{tikzpicture}
\end{center}
\end{proof}
This shows that once we specialise at roots of unity, the braid group action onto the quotient through this subspace is well defined. More precisely, we obtain the following braid group action at roots of unity.
\begin{definition} (Truncated specialised Lawrence representation)\\
Let us define the following quotient over the ring in two variables $\Z[x^{\pm 1}, d^{\pm 1}]$:
$$\HH^{\infty,N}_{n,m}:=\HH^{\infty}_{n,m}/\HH^{\infty,\geq N}_{n,m}$$
Then, action of the braid group specialised at roots of unity $l_{n,m}|_{\psi_{\xi, \lambda}}$ leads to a well defined braid group action onto the quotient:
$$l^{\infty,N}_{n,m}=l_{n,m}|_{\psi_{\xi,\lambda}}: B_n \rightarrow Aut\left( \HH^{\infty,N}_{n,m}|_{\psi_{\xi,\lambda}},\C \right).$$
We call this action the truncated special Lawrence representation.
\end{definition}
In the next part we will see that actually at the level of the braid group action, the two versions of the Lawrence representation are isomorphic. The advantage of this special version of the Lawrence representation will be discussed regarding the yopological intersection pairing from section \ref{5}.
\begin{theorem}(\cite{CM})
The special Lawrence representation is isomorphic to the Lawrence representation over $\Z[x^{\pm},d^{\pm}]$, by an isomorphism which makes the correspondence between special multiforks and the usual multiforks:\\

$ \ \ \ \ \ \ \ \ \ \ \ \ \ \ \ \ \ \ \ \ \ \ \ \ \ \ \ \ \ \ \ \ \ \ \ l^{\infty}_{n,m} \ \ \ \ \ \ \ \ \ \ \ \ \ \ \ \ l_{n,m}$
$$B_n \curvearrowright \HH^{\infty}_{n,m}\rightarrow_{f} \HH_{n,m} \curvearrowleft B_n$$
$$f([[\tilde{\F}_e]])=[\tilde{\F}_e].$$
\end{theorem}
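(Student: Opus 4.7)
My plan is to take the morphism $f$ to be simply the restriction of the inclusion-induced map $\iota\colon H^{lf,\infty}_m(\tilde C_{n,m},\Z)\to H^{lf}_m(\tilde C_{n,m},\Z)$ to the subspace $\HH^{\infty}_{n,m}$. The preceding lemma already gives me the two key ingredients for free: $\iota$ sends the special multifork $[[\tilde{\F}_e]]$ to the usual multifork $[\tilde{\F}_e]$, and $\iota$ is $B_n$-equivariant with respect to the two braid group actions $l^{\infty}_{n,m}$ and $l_{n,m}$. So the only thing that really has to be checked is that this restriction is an isomorphism of $\Z[x^{\pm},d^{\pm}]$-modules.

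For that I would argue by comparing bases. By Proposition~\ref{P:20}, the family $\mathscr B_{\HH^{\infty}_{n,m}}=\{[[\tilde{\F}_e]] : e\in E_{n,m}\}$ is a $\Z[x^{\pm},d^{\pm}]$-basis of $\HH^{\infty}_{n,m}$, and similarly $\mathscr B_{\HH_{n,m}}=\{[\tilde{\F}_e]:e\in E_{n,m}\}$ is a basis of $\HH_{n,m}$. Both are indexed by the same set $E_{n,m}$, and the restriction $f$ sends the first basis bijectively onto the second. Consequently $f$ takes a basis to a basis, so it is a $\Z[x^{\pm},d^{\pm}]$-linear isomorphism (with inverse the $\Z[x^{\pm},d^{\pm}]$-linear extension of $[\tilde{\F}_e]\mapsto [[\tilde{\F}_e]]$).

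Finally, equivariance is inherited from $\iota$: for $\beta\in B_n$ and any $e\in E_{n,m}$ one has
$$
f\bigl(l^{\infty}_{n,m}(\beta)\cdot [[\tilde{\F}_e]]\bigr)
=\iota\bigl(l^{\infty}_{n,m}(\beta)\cdot[[\tilde{\F}_e]]\bigr)
=l_{n,m}(\beta)\cdot\iota([[\tilde{\F}_e]])
=l_{n,m}(\beta)\cdot f([[\tilde{\F}_e]]),
$$
and by $\Z[x^{\pm},d^{\pm}]$-linearity this extends to all of $\HH^{\infty}_{n,m}$.

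I do not expect a genuine obstacle here: all the nontrivial geometric content, namely that the two families of multiforks genuinely give bases and that $\iota$ matches them and commutes with the mapping class group action, has been packaged into the preceding proposition and lemma (quoted from \cite{CM}). The proof then reduces to the formal observation that a $\Z[x^{\pm},d^{\pm}]$-linear map sending a basis to a basis is an isomorphism. The only point worth double-checking carefully is that the braid group action on $\HH^{\infty}_{n,m}$ is indeed the restriction of the action on $H^{lf,\infty}_m(\tilde C_{n,m},\Z)$ (so that the equivariance of $\iota$ transfers verbatim), but this is built into the definition of $l^{\infty}_{n,m}$ given earlier.
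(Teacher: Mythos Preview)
Your argument is correct. Note, however, that the paper does not actually prove this theorem: it is quoted from \cite{CM} without proof, so there is no ``paper's own proof'' to compare against. Your approach is exactly the natural one given the surrounding statements: the preceding lemma (also from \cite{CM}) supplies both $\iota([[\tilde{\F}_e]])=[\tilde{\F}_e]$ and $B_n$-equivariance of $\iota$, while Proposition~\ref{P:20} and the definition of $\HH_{n,m}$ supply that the two multifork families are free $\Z[x^{\pm},d^{\pm}]$-bases indexed by the same set $E_{n,m}$. From there the conclusion is indeed the formal linear-algebra fact that a module map sending a basis bijectively onto a basis is an isomorphism. Your own caveat is accurate: all the genuine content lives in the cited results, and what remains is bookkeeping.
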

For $x \in \HH^{\infty}_{n,m}|_{\psi_{\xi,\lambda}}$, we denote its class in the quotient space $\HH^{\infty,N}_{n,m}|_{\psi_{\xi,\lambda}}$ by $\{\{  x \} \}$.
Similarly, for $y \in \HH_{n,m}|_{\psi_{\xi,\lambda}}$, we denote its class in the space $\HH^{N}_{n,m}|_{\psi_{\xi,\lambda}}$ by $\{  x \}$.
\begin{corollary} \label{truncident}
For any $N \in \N$, $\xi=e^{\frac{2 \pi i}{2N}}$ and $\lambda \in \C$, one has the following isomorphism of truncated braid group representations, specialised at roots of unity:\\

$ \ \ \ \ \ \ \ \ \ \ \ \ \ \ \ \ \ \ \ \ \ \ \ \ \ \ \ \ l^{\infty,N}_{n,m}|_{\psi_{\xi,\lambda}} \ \ \ \ \ \ \ \ \ \ \ \ \ \ \ \ \ l^N_{n,m}|_{\psi_{\xi,\lambda}}$
$$B_n \curvearrowright \HH^{\infty,N}_{n,m}|_{\psi_{\xi,\lambda}}\rightarrow_{f_N} \HH^N_{n,m}|_{\psi_{\xi,\lambda}} \curvearrowleft B_n$$
Where $f_N$ is defined on generators:
$$f_N\big( \{ \{ \  [[\tilde{\F}_e]] \ \} \} \big)=\{  [\tilde{\F}_e] \}.$$
\end{corollary}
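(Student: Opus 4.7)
The plan is to deduce the corollary directly from the preceding theorem, which already provides an isomorphism $f:\HH^{\infty}_{n,m}\rightarrow\HH_{n,m}$ of $B_n$-representations over $\Z[x^{\pm},d^{\pm}]$ sending special multiforks to multiforks. The main task is to check that $f$ restricts to an isomorphism on the ``large'' subspaces generated by partitions in $E^{\geq N}_{n,m}$, and that everything commutes with the specialisation $\psi_{\xi,\lambda}$.

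First, observe that $\HH^{\infty,\geq N}_{n,m}$ is by definition (\ref{D:20}) the $\Z[x^{\pm},d^{\pm}]$-submodule generated by $\{[[\tilde{\F}_e]] : e\in E^{\geq N}_{n,m}\}$, while $\HH^{\geq N}_{n,m}$ is generated by $\{[\tilde{\F}_e] : e\in E^{\geq N}_{n,m}\}$. Since $f([[\tilde{\F}_e]])=[\tilde{\F}_e]$ on the generators and $f$ is an isomorphism of free modules that sends the basis $\mathscr B_{\HH^{\infty}_{n,m}}$ bijectively onto $\mathscr B_{\HH_{n,m}}$, its restriction yields an isomorphism
$$f^{\geq N}:\HH^{\infty,\geq N}_{n,m}\stackrel{\simeq}{\longrightarrow}\HH^{\geq N}_{n,m}$$
of $\Z[x^{\pm},d^{\pm}]$-modules. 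Consequently $f$ descends to an isomorphism of quotients
$$\bar f:\HH^{\infty}_{n,m}/\HH^{\infty,\geq N}_{n,m}\stackrel{\simeq}{\longrightarrow}\HH_{n,m}/\HH^{\geq N}_{n,m}$$
sending $\{\{[[\tilde{\F}_e]]\}\}\mapsto\{[\tilde{\F}_e]\}$ on the generators.

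Next, apply the change of coefficients $\psi_{\xi,\lambda}:\Z[x^{\pm},d^{\pm}]\rightarrow\C$ from Definition \ref{coeffmorph}. Since $-\otimes_{\Z[x^{\pm},d^{\pm}]}\C$ is right exact and $f$, $f^{\geq N}$ are isomorphisms before specialisation, the specialised maps $f|_{\psi_{\xi,\lambda}}$ and $f^{\geq N}|_{\psi_{\xi,\lambda}}$ remain isomorphisms, and the short exact sequence
$$0\to \HH^{\infty,\geq N}_{n,m}|_{\psi_{\xi,\lambda}}\to \HH^{\infty}_{n,m}|_{\psi_{\xi,\lambda}}\to \HH^{\infty,N}_{n,m}|_{\psi_{\xi,\lambda}}\to 0$$
maps isomorphically onto its $\HH_{n,m}|_{\psi_{\xi,\lambda}}$-analogue via the pair $(f^{\geq N}|_{\psi_{\xi,\lambda}},f|_{\psi_{\xi,\lambda}})$. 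Hence $\bar f|_{\psi_{\xi,\lambda}}$ identifies with the map $f_N$ defined on generators in the statement.

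Finally, the equivariance for the specialised representations $l^{\infty,N}_{n,m}|_{\psi_{\xi,\lambda}}$ and $l^N_{n,m}|_{\psi_{\xi,\lambda}}$ is inherited from the equivariance of $f$ with respect to $l^{\infty}_{n,m}$ and $l_{n,m}$, since braid actions descend to quotients by invariant submodules and specialisation is functorial. The only subtlety (and perhaps the main point to verify carefully) is that the subspaces $\HH^{\infty,\geq N}_{n,m}|_{\psi_{\xi,\lambda}}$ and $\HH^{\geq N}_{n,m}|_{\psi_{\xi,\lambda}}$ are genuinely preserved by the specialised braid actions; but this is exactly the content of the lemma proved just above the corollary, together with its counterpart from \cite{Ito2} which was used to define the truncated Lawrence representation $l^N_{n,m}$ in the first place. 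Combining these ingredients produces the desired isomorphism $f_N$ and completes the proof.
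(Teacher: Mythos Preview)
Your proof is correct and follows essentially the same approach as the paper. The only cosmetic difference is the order of operations: you first restrict $f$ to the $\geq N$ subspaces and pass to quotients over $\Z[x^{\pm},d^{\pm}]$, then specialise, whereas the paper specialises $f$ first and then observes that $f|_{\psi_{\xi,\lambda}}$ restricts to an isomorphism on the specialised $\geq N$ subspaces before passing to the quotient; since $f$ sends the multifork basis bijectively to the multifork basis, both orders yield the same map $f_N$, and your explicit mention of equivariance (which the paper leaves implicit) is a welcome addition.
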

\begin{proof}
Let us define $$f_N( \{ \{ x \} \} )=\{ f|_{\psi_{\xi,\lambda}}(x) \}, \forall x \in \HH^{\infty}_{n,m}|_{\psi_{\xi,\lambda}}.$$
Since $f$ is an isomorphism, it follows that $f|_{\psi_{\xi,\lambda}}$ is an isomorphism as well:
$$f|_{\psi_{\xi,\lambda}}:\HH^{\infty}_{n,m}|_{\psi_{\xi,\lambda}}\rightarrow \HH_{n,m}|_{\psi_{\xi,\lambda}}.$$ 
$$f|_{\psi_{\xi,\lambda}}\big( [[\tilde{\F}_e]] \big)=  [\tilde{\F}_e] .$$
We notice that it remains an isomorphism once we restrict it to the subspaces generated by multiforks with multiplicities more than $N$:
$$f|_{\psi_{\xi,\lambda}}:\HH^{\infty, \geq N}_{n,m}|_{\psi_{\xi,\lambda}}\rightarrow \HH^{\geq N}_{n,m}|_{\psi_{\xi,\lambda}}.$$ 
It follows that $f_N$ is a well defined isomorphism between the quotient spaces.
\end{proof}

\section{Quantum and Homological braid group representations}\label{4}

\begin{theorem}(Kohno's Theorem)\\
If the parameters $(q,\lambda)\in \C \times \C$ are generic, then the following braid group actions are isomorphic:

$ \ \ \ \ \ \ \ \ \ \ \ \ \ \ \ \ \ \ \ \ \ \ \ \ \ \ \ \ {^{W}\varphi}^{q,\lambda}_{n,m} \ \ \ \ \ \ \ \ \ \ \ \ \ \ \ \ \ l_{n,m}|_{\psi_{\xi,\lambda}}$
$$B_n \curvearrowright W^{q,\lambda}_{n,m}\leftarrow_{\Theta_{q,\lambda}} \HH_{n,m} |_{\psi_{q,\lambda}} \curvearrowleft B_n$$
$$\mathscr B_{W^{q,\lambda}_{n,m}} \ \ \ \ \ \mathscr B_{\HH_{n,m}}|_{\psi_{q,\lambda}}$$
where the isomorphism is defined on the multifork basis by:
$$\Theta_{q,\lambda}([\tilde{F}_e])=\phi_{q,\lambda}\big(\bar{v}_{\iota(e)}^{q,\lambda}\big).$$

\end{theorem}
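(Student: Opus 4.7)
The plan is a three-step strategy: first, verify that the prescribed map $\Theta_{q,\lambda}$ is a well-defined $\C$-linear isomorphism of the underlying vector spaces; second, reduce the required $B_n$-equivariance to a local check on the Artin generators $\sigma_i$; third, match the two local actions coefficient by coefficient after specialisation by $\psi_{q,\lambda}$.

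For Step 1, note that by Proposition \ref{P:bqu} the set $\mathscr B_{W^{q,\lambda}_{n,m}}=\{\phi_{q,\lambda}(\bar v^{q,\lambda}_{\iota(e)})\mid e\in E_{n,m}\}$ is a basis of $W^{q,\lambda}_{n,m}$ of cardinality $d_{n,m}$ whenever $\lambda$ is generic with respect to $q$, and by the proposition characterising multiforks the set $\mathscr B_{\HH_{n,m}}$ is a $\Z[x^{\pm},d^{\pm}]$-basis of $\HH_{n,m}$, hence a $\C$-basis of the specialisation $\HH_{n,m}|_{\psi_{q,\lambda}}$ of cardinality $d_{n,m}$. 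Since $\Theta_{q,\lambda}$ sends a basis to a basis it is automatically a linear isomorphism; the entire content of the theorem lies in its equivariance with the two braid actions.

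For Step 2, recall that $B_n$ is generated by the Artin generators $\sigma_1,\dots,\sigma_{n-1}$, so it suffices to establish, for each $i$, the intertwining identity
\begin{equation*}
\Theta_{q,\lambda}\circ l_{n,m}|_{\psi_{q,\lambda}}(\sigma_i)=\varphi^{q,\lambda}_{n,m}(\sigma_i)\circ\Theta_{q,\lambda}.
\end{equation*}
Both sides are local at the $i$-th slot: on the quantum side, $\varphi^{q,\lambda}_{n,m}(\sigma_i)$ only modifies the $i$-th and $(i{+}1)$-st tensor factors; on the homological side, the half-twist $\sigma_i$ is supported in a small bigon around the punctures $p_i,p_{i+1}$, so its effect on a multifork $[\tilde{\F}_e]$ depends only on the pair $(e_i,e_{i+1})$ together with the deck-transformation weights picked up when the lift crosses the preimage of the walls produced by the other $e_j$'s. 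Using this locality one reduces the statement to the case $n=2$ where $m=e_1$ is arbitrary, the general case following by tensoring with identity operators on the other strands and tracking the global $\Z\oplus\Z$-monodromy.

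Step 3 is the heart of the argument. For $n=2$ one computes explicitly: the lifted half-twist acting on $[\tilde{\F}_e]$ expands as a $\Z[x^{\pm},d^{\pm}]$-linear combination of multiforks whose coefficients count the intersection of the twisted disc with the chambers of $\tilde C_{2,m}$, weighted by the local monodromy around the punctures ($x$-factor) and around the diagonal ($d$-factor). On the quantum side, using Proposition \ref{P:actions} one expands $\mathscr R_{\lambda,\lambda}(\phi_{q,\lambda}(\bar v^{q,\lambda}_{\iota(e)}))$ by commuting the braiding past the projector $\phi_{q,\lambda}$ and applying the series formula for $R$. After substituting $\psi_{q,\lambda}(x)=q^{-2\lambda}$, $\psi_{q,\lambda}(d)=-q^{2}$, the two expansions become identical $q$-binomial sums. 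The main obstacle is this explicit matching: keeping the normalisation of the lifts $\tilde{\F}_e$, the basepoint conventions for $\tilde{\bf d}$, and the quantum-group normalisation $v_i=[\lambda;i]\,\hat v_i$ mutually coherent so that the homological monomials $x^a d^b$ land on the correct quantum weights $q^{-2a\lambda}(-q^{2})^{b}$ and the combinatorial $q$-binomial identities line up. This is exactly the verification carried out by Kohno in \cite{Koh} and revisited from the multifork viewpoint by Ito in \cite{Ito}, \cite{Ito2}, whose proofs we follow.
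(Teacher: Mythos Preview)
The paper does not give its own proof of this statement: it is recorded as ``Kohno's Theorem'' with a citation to \cite{Koh} (and implicitly to the refinements in \cite{JK}, \cite{Ito}, \cite{Ito2}), and is used as a black box for the subsequent root-of-unity identification. There is therefore no in-paper argument to compare your proposal against.

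Your sketch is a reasonable outline of the strategy one finds in the cited literature, particularly the multifork viewpoint of Ito and Jackson--Kerler, and your final paragraph correctly defers the actual computation to those sources. Two small cautions. First, Kohno's original argument in \cite{Koh} does not proceed by a direct coefficient match of the type you describe in Step~3: it goes through the KZ connection and identifies both representations with the monodromy of a local system, so if you are presenting this as ``Kohno's proof'' you should either adopt that route or make clear that you are following the later, more combinatorial, approach of \cite{JK} and \cite{Ito}. Second, your reduction in Step~2 to ``the case $n=2$'' is slightly misleading as stated: for $n=2$ the highest weight space $W^{q,\lambda}_{2,m}$ has dimension $d_{2,m}=\binom{m}{m}=1$, so the representation is one-dimensional and there is nothing to check beyond a scalar. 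What you presumably mean is a local two-strand analysis of the action of $\sigma_i$ on the pair $(e_{i-1},e_i)$ inside the ambient $n$-strand picture, which is indeed how the argument in \cite{Ito} is organised; it would be clearer to say so.
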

We are interested in the case where $q$ is a root of unity, which is not a generic situation. In the sequel, we present an identification between quantum representation and the truncated Lawrence representation, which corresponds to this non-generic case.
\begin{theorem}(Ito \cite{Ito2})
Let $N \in \N$ and $\xi:=e^{\frac{2 \pi i}{2N}}$ a $2N^{th}$ root of unity.
Then, for any $\lambda \in \C$ generic with respect to $\xi$ one has the following isomorphic braid group representations:

$ \ \ \ \ \ \ \ \ \ \ \ \ \ \ \ \ \ \ \ \ \ \ \ \ \ \ \ \ {^{W}\varphi}^{\xi,\lambda}_{n,m} \ \ \ \ \ \ \ \ \ \ \ \ \ \ \ \ \ l^{N}_{n,m}|_{\psi_{\xi,\lambda}}$
$$B_n \curvearrowright W^{\xi,\lambda}_{n,m}\leftarrow_{\tilde{\Theta}_{\xi,\lambda}} \HH^{N}_{n,m} |_{\psi_{\xi,\lambda}} \curvearrowleft B_n$$
$$\mathscr B_{W^{\xi,\lambda}_{n,m}} \ \ \ \ \ \mathscr B_{\HH^{N}_{n,m}}|_{\psi_{\xi,\lambda}}$$
where the isomorphism is established by the formulas:
$$\tilde{\Theta}_{q,\lambda}(\{[\tilde{F}_e]\})=\phi_{\xi,\lambda}\big(\bar{v}_{\iota(e)}^{\xi,\lambda}\big).$$

\end{theorem}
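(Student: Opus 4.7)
The natural approach is to derive $\tilde\Theta_{\xi,\lambda}$ from Kohno's generic isomorphism $\Theta_{q,\lambda}$ by passing to the root-of-unity specialisation. First I would observe that the matrix coefficients of Kohno's map $\Theta_{q,\lambda}([\tilde F_e]) = \phi_{q,\lambda}(\bar v^{q,\lambda}_{\iota(e)})$, of the braid group action on $\HH_{n,m}|_{\psi_{q,\lambda}}$, and of the quantum action on $W^{q,\lambda}_{n,m}$, are all polynomial expressions in $q^{\pm 1}$ and $q^{\pm\lambda}$. Consequently the specialisation $q \mapsto \xi$ is well defined on both sides, it preserves intertwiners of the braid group action, and it preserves the defining relations $K \cdot v = q^{n\lambda - 2m}v$ and $E\cdot v = 0$ that cut out the highest weight subspace. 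In particular the specialised Kohno map lands in $W^{\xi,\lambda}_{n,m}$.

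The crucial step is to show that this specialised map factors through the truncation $\HH^{N}_{n,m}|_{\psi_{\xi,\lambda}}$. For any $e \in E^{\geq N}_{n,m}$ the vector $\bar v^{\xi,\lambda}_{\iota(e)} = \xi^{\lambda \sum i e_i}\, v_0 \otimes v_{e_1}\otimes \cdots \otimes v_{e_{n-1}}$ contains at least one tensor factor $v_{e_i}$ with $e_i \geq N$; since $U_N^\lambda = \langle v_0,\ldots,v_{N-1}\rangle$ has only the first $N$ basis vectors, such a factor is identically zero in $U_N^\lambda$. Applying the formula from Proposition \ref{P:bqu1}, one obtains $\phi_{\xi,\lambda}(\bar v^{\xi,\lambda}_{\iota(e)}) = 0$ for every $e \in E^{\geq N}_{n,m}$, so the generators of $\HH^{\geq N}_{n,m}|_{\psi_{\xi,\lambda}}$ are sent to $0$. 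This defines a well-posed map $\tilde\Theta_{\xi,\lambda}: \HH^{N}_{n,m}|_{\psi_{\xi,\lambda}} \to W^{\xi,\lambda}_{n,m}$ with the stated formula on classes $\{[\tilde F_e]\}$.

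It remains to prove bijectivity and equivariance. Proposition \ref{P:bqu1} states that $\{\phi_{\xi,\lambda}(\bar v^{\xi,\lambda}_{\iota(e)}) \mid e \in E^N_{n,m}\}$ is a basis of $W^{\xi,\lambda}_{n,m}$, so $\tilde\Theta_{\xi,\lambda}$ is surjective; since both sides have $\C$-dimension $d^N_{n,m}$, it is an isomorphism. Equivariance for the braid action follows from the equivariance of Kohno's isomorphism: the intertwining identity $\Theta_{q,\lambda}\circ l_{n,m}|_{\psi_{q,\lambda}}(\sigma) = {}^{W}\!\varphi^{q,\lambda}_{n,m}(\sigma)\circ \Theta_{q,\lambda}$ is a polynomial identity in $q^{\pm1},q^{\pm\lambda}$, hence survives specialisation at $q=\xi$ and descends to the quotient by the $\HH^{\geq N}$-invariant subspace.

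The main obstacle is the passage from generic $q$ to the root of unity: one must verify that none of the formulas appearing in Kohno's identification develop a pole at $q=\xi$ (the formulas for $\phi_{q,\lambda}$ and for the $R$-matrix action are manifestly polynomial, so this is essentially an observation, but it is the step where the argument could fail). A related subtlety lies in Proposition \ref{P:bqu1} itself: in the non-semisimple setting at a root of unity, linear independence of the vectors $\phi_{\xi,\lambda}(\bar v^{\xi,\lambda}_{\iota(e)})$ is not automatic and relies on the genericity hypothesis $\lambda \in \C\setminus \N$ together with an explicit triangularity argument in the basis of $(U_N^\lambda)^{\otimes n}$.
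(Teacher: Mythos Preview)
The paper does not prove this theorem; it is stated with the attribution ``(Ito \cite{Ito2})'' and no proof is given. So there is no proof in the paper to compare against, and your proposal must be assessed on its own merits.

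Your overall strategy (specialise Kohno's isomorphism, show it kills the $\geq N$ part, then count dimensions) is the natural one, but the ``factoring through the truncation'' step contains a genuine gap. You write that for $e\in E^{\geq N}_{n,m}$ the tensor factor $v_{e_i}$ with $e_i\geq N$ ``is identically zero in $U_N^\lambda$''. This is not correct: $v_{e_i}$ is a nonzero vector of the infinite-dimensional Verma module $\hat V_\lambda$ which simply does not lie in the \emph{submodule} $U_N^\lambda$. Specialising Kohno's map $\Theta_{q,\lambda}$ at $q=\xi$ produces a map into the highest weight space inside $(\hat V_\lambda)^{\otimes n}$, not into $W^{\xi,\lambda}_{n,m}\subseteq (U_N^\lambda)^{\otimes n}$; these are different spaces, and the former is strictly larger. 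Relatedly, the generic formula for $\phi_{q,\lambda}$ sums over $0\le k\le m$, whereas $\phi_{\xi,\lambda}$ in Proposition~\ref{P:bqu1} sums only over $0\le k\le N-1$; the extra terms $v_k\otimes E^k(w)$ for $k\geq N$ do not vanish at $q=\xi$, they lie outside $(U_N^\lambda)^{\otimes n}$. So the specialised Kohno map does \emph{not} automatically land in $W^{\xi,\lambda}_{n,m}$, and does \emph{not} send $[\tilde F_e]$ with $e\in E^{\geq N}_{n,m}$ to zero.

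To repair the argument you need either a $U_\xi(sl(2))$-equivariant projection $\hat V_\lambda\twoheadrightarrow U_N^\lambda$ splitting the inclusion (and you would have to check such a splitting exists for generic $\lambda$ and that it intertwines the braiding), or---more in the spirit of Ito's approach---you should define $\tilde\Theta_{\xi,\lambda}$ directly on the basis $\{\,\{[\tilde F_e]\}\mid e\in E^N_{n,m}\}$ by the stated formula and then verify $B_n$-equivariance by comparing the specialised, truncated Lawrence matrices with the quantum action on $W^{\xi,\lambda}_{n,m}$. The ``polynomial identity'' argument you sketch does give equivariance of the specialised map into $(\hat V_\lambda)^{\otimes n}$, but descending this to the subquotient $(U_N^\lambda)^{\otimes n}$ is precisely the missing step.
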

Since we have seen in corollary \ref{truncident} that the special truncated Lawrence representation and the truncated Lawrence representation are actually isomorphic, we conclude following the identification between quantum and special Lawrence representations.
\begin{corollary}(Quantum and special homological braid group representations)\label{C:Ident}

For any $N\in N$, let $\xi=e^{\frac{2 \pi i}{2N}}$ and consider $\lambda \in \C$ generic with respect to $\xi$. Then, one has the following isomorphism of braid group actions:

$ \ \ \ \ \ \ \ \ \ \ \ \ \ \ \ \ \ \ \ \ \ \ \ \ \ \ \ \ {^{W}\varphi}^{\xi,\lambda}_{n,m} \ \ \ \ \ \ \ \ \ \ \ \ \ \ \ \ \ l^{\infty,N}_{n,m}|_{\psi_{\xi,\lambda}}$
$$B_n \curvearrowright W^{\xi,\lambda}_{n,m}\leftarrow_{\Theta^N_{\xi,\lambda}} \HH^{\infty,N}_{n,m} |_{\psi_{\xi,\lambda}} \curvearrowleft B_n.$$
$$\mathscr B_{W^{\xi,\lambda}_{n,m}} \ \ \ \ \ \mathscr B_{\HH^{\infty,N}_{n,m}}|_{\psi_{\xi,\lambda}}$$
where the correspondence is established by the function $$\Theta^N_{\xi,\lambda}=\tilde{\Theta}_{\xi,\lambda}\circ f^{-1}_N.$$
\end{corollary}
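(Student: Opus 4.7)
The statement is essentially a direct consequence of the two preceding identifications, so my plan is simply to assemble them into a commutative triangle. First I would invoke Ito's theorem (the theorem stated immediately before), which gives a $B_n$-equivariant isomorphism
\[
\tilde{\Theta}_{\xi,\lambda}: \HH^{N}_{n,m}|_{\psi_{\xi,\lambda}} \longrightarrow W^{\xi,\lambda}_{n,m}
\]
sending the class of the multifork $[\tilde{\F}_e]$ to the highest weight vector $\phi_{\xi,\lambda}(\bar{v}^{\xi,\lambda}_{\iota(e)})$ for each $e\in E^N_{n,m}$. Second, I would invoke Corollary \ref{truncident}, which supplies the $B_n$-equivariant isomorphism
\[
f_N : \HH^{\infty,N}_{n,m}|_{\psi_{\xi,\lambda}} \longrightarrow \HH^{N}_{n,m}|_{\psi_{\xi,\lambda}}, \qquad f_N\bigl(\{\{[[\tilde{\F}_e]]\}\}\bigr) = \{[\tilde{\F}_e]\}.
\]

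With these two maps in hand, I would define
\[
\Theta^{N}_{\xi,\lambda} := \tilde{\Theta}_{\xi,\lambda} \circ f_N .
\]
Being the composite of two isomorphisms of $\Bbbk$-modules, $\Theta^{N}_{\xi,\lambda}$ is itself an isomorphism; being the composite of two $B_n$-equivariant maps, it intertwines the braid group actions $l^{\infty,N}_{n,m}|_{\psi_{\xi,\lambda}}$ and ${^{W}\varphi}^{\xi,\lambda}_{n,m}$. The compatibility with the distinguished bases is then immediate from the chain
\[
\{\{[[\tilde{\F}_e]]\}\} \;\stackrel{f_N}{\longmapsto}\; \{[\tilde{\F}_e]\} \;\stackrel{\tilde{\Theta}_{\xi,\lambda}}{\longmapsto}\; \phi_{\xi,\lambda}\bigl(\bar{v}^{\xi,\lambda}_{\iota(e)}\bigr), \qquad e\in E^N_{n,m},
\]
which matches the desired formula for $\Theta^N_{\xi,\lambda}$.

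There is essentially no obstacle here beyond bookkeeping: both components of the composition have been established earlier (Ito's theorem handles the quantum/topological correspondence at a root of unity, and the infinity/non-infinity comparison is done in Corollary \ref{truncident}). The only point that would require a small check is that $f_N$ is the correct variance (i.e.\ that one composes with $f_N$ and not with $f_N^{-1}$) so that the source of $\Theta^N_{\xi,\lambda}$ is genuinely the special truncated Lawrence representation; this is read off the generator-level formula above. No additional algebraic or topological input is needed.
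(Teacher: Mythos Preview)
Your argument is correct and is exactly the paper's approach: the corollary is stated immediately after Ito's theorem and Corollary~\ref{truncident} precisely so that one composes the two equivariant isomorphisms. Your remark about the variance is also well taken---the composition $\tilde{\Theta}_{\xi,\lambda}\circ f_N$ (not $f_N^{-1}$) is the one with the correct domain $\HH^{\infty,N}_{n,m}|_{\psi_{\xi,\lambda}}$, so the displayed formula in the statement contains a typographical slip that your generator-level chain resolves.
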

For the further purpose concerning the coloured Alexander invariants, we will be interested in highest weight spaces of the form $W_{2n-1,(n-1)(N-1)}$ for $n,N\in\N$.
\begin{notation}\label{N:2} 
Let $n,N \in \N$. We denote the identification function  between homological and quantum representations, which comes from corollary \ref{C:Ident} as follows:
$$B_{2n-1}\curvearrowright W^{\xi,\lambda}_{2n-1,(n-1)(N-1)}\leftarrow_{\Theta^{N,n}_{\xi,\lambda}} \HH^{\infty,N}_{2n-1,(n-1)(N-1)} |_{\psi_{\xi,\lambda}} \curvearrowleft B_{2n-1}$$
$$\mathscr B_{W^{\xi,\lambda}_{2n-1,(n-1)(N-1)}} \ \ \ \ \ \mathscr B_{\HH^{\infty,N}_{2n-1,(n-1)(N-1)}}|_{\psi_{\xi,\lambda}}$$

\end{notation}
\section{Dual Lawrence representation-Scans}\label{4'}

In this section we will define a dual space, using the homology of the covering related to the boundary and to a part of the infinity, which will correspond to the special Lawrence representation, through a topological pairing. 

Let us fix  $n,m \in \N$. As in \cite{CM}, we will use the homolgy of the convering space $C_{n,m}$ relative to its boundary and to the Borel-Moore part which corresponds to collisions of points in the configuration space (more precisely, the diagonal $\Delta$):
$$H^{lf, \Delta}_{m}(\tilde{C}_{n,m}, \partial; \Z).$$ 
For each element $f \in E_{n,m}$, we will associate a homology class in this homology relative to the boundary:
$$[[ \tilde{\D}_f]] \in H^{lf, \Delta}_{m}(\tilde{C}_{n,m}, \partial; \Z).$$
Let us fix $n-1$ vertical segments, as in the picture: 
$$J_1,...,J_{n-1}:[0,1]\rightarrow D_n$$
$ \ \ \ \ \ \ \ \ \ \ \ \ \ \ \ \ \ \ \ \ \ \ \ \ \ \ \ \ \  \ \ \ \ \ \ \ \ \ J_i \text{ vertical line between } p_i \text{ and } p_{i+1}, \ \ \ $

$ \ \ \ \ \ \ \ \ \ \ \ \ \ \ \ \ \ \ \ \ \ \ \ \ \ \ \ \ \ \ \ \ \ \  J_i(0),J_i(1) \in \partial D_n, \ \ \ \ \forall i \in \{1,..,n-1\}. $\\
For a natural  number $p \in \N$ and a segment $J: [0,1] \subseteq D_n$, we denote the unordered configuration space of $p$ points on the segment $J$ by:
$$Conf_{p}(J):=J\times...\times J/ Sym_{p} \subseteq Conf_{p}(D_n), $$
where $Sym_p$ is the action of the symmetric group of order $p$.
\begin{definition}(Scans)\label{D:21}

Let us fix a partition $f=(f_1,...,f_{n-1}) \in E_{n,m}$.\\ 
{ \bf 1) Submanifolds}
We consider the following submanifold in the configuration space:
$$\D_{f}:=Conf_{f_1}(J_1)\times...\times Conf_{f_{n-1}}(J_{n-1})/ Sym_m.$$ 
{ \bf 2) Base points}\\ 
For each $k \in \{1,...,m\}$, let us choose a path, as in the picture below 
$${\delta}^{f}_k:[0,1]\rightarrow D_n$$ 
$${\delta}^{f}_k(0)=d_k; \ \ \ {\delta}^{f}_k(1)\in J_{l} $$
for the unique $l \in \{1,...,n-1 \}$ with the property:
$$e_1+...+e_{l-1}<k\leq e_1+...+e_{l}.$$
Using these $m$ paths in the disc, we construct the following path in the configuration space:
$$\delta^f := \pi_m \circ (\delta^{f}_1, ..., \gamma^{f}_m ) : [0,1] \rightarrow C_{n,m}$$
Let us consider $\tilde{\delta}^f:  [0,1] \rightarrow \tilde{C}_{n,m} $ the unique lift of the path $\delta^f$ to the covering with the condition that: $$\tilde{\delta}^f(0)=\tilde{\bf d}.$$
{ \bf 3) Lifts to the covering-Scans}\\ 
We will use $\tilde{\delta}^f(1)$ as a base point for lifting submanifolds from the base space to the covering. Consider $\tilde{\D}_{f}$ to be the unique lift of $\D_{f}$ to the covering $\tilde{C}_{n,m}$ such that:
$$\tilde{\D}_{f}\cap \tilde{\delta}^f(1) \neq \emptyset.$$
Then, this submanifold will lead to a well defined homology class in the homology relative to the boundary and Borel-Moore with respect to the collisions:
$$[[\tilde{\D}_{f}]] \in H^{lf, \Delta}_{m}(\tilde{C}_{n,m}, \partial; \Z).$$
We call $[[\tilde{\D}_{f}]]$ the scan associated to the partition $f \in E_{n,m}.$
\end{definition}

\begin{definition}(Special dual Lawrence representation)\\
Consider the set of all scans, indexed by all possible partitions:
$$\mathscr B_{\HH^{\Delta,\partial }_{n,m}}:= \{  [[\tilde{\D}_f]] \ | \  f\in E_{n,m}\}.$$
Let us define the subspace in the homology relative to the boundary and Borel-Moore with respect to the collisions, generated by all scans:
$$\HH^{\Delta,\partial }_{n,m}:= <[[\tilde{\D}_f]] \ | \  f\in E_{n,m}>_{\Z[x^{\pm},d^{\pm}]} \subseteq H^{lf, \Delta}_{m}(\tilde{C}_{n,m}, \partial; \Z).$$
We call the $\Z[x^{\pm 1}, d^{\pm 1}]$-module $\HH^{\Delta,\partial }_{n,m}$  special dual Lawrence representation.
\end{definition}
\subsection{Non-degenerate pairing}
This part is devoted to the study of intersection pairings between homologies in complementary dimensions of covering spaces. This pairings were used by Bigelow\cite{Big} and \cite{Big2} mainly between the middle dimensional Borel Moore homology of the covering and the homology of the covering relative to its boundary.  Here, we will use a similar idea, but for a different splitting of the "boundary" and "boundary at infinity" of the covering $C_{n,m}$, namely:
$$H^{lf,\infty}_{m}(\tilde{C}_{n,m},\Z) \text{ and }  H^{lf, \Delta}_{m}(\tilde{C}_{n,m}, \partial; \Z)$$
 
Following \cite{CM} from a Lefschetz duality type techniques one can deduce the following result.
\begin{theorem}
There exist a non-degenerate intersection pairing between the two versions of the middle dimensional Borel-Moore homology of the covering space:
$$< , >: H^{lf,\infty}_{m}(\tilde{C}_{n,m},\Z) \otimes  H^{lf, \Delta}_{m}(\tilde{C}_{n,m}, \partial; \Z)\rightarrow \Z[x^{\pm}, d^{\pm}].$$
\end{theorem}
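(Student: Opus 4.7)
The plan is to establish the pairing via geometric intersection and then verify non-degeneracy by computing it explicitly on the distinguished bases of multiforks and scans, which will give a Gram matrix equal to the identity.

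First, I would observe that $\tilde{C}_{n,m}$ is an oriented real $2m$-manifold and that the stratification of the ends of $C_{n,m}$ naturally splits into two disjoint pieces: the puncture-locus (where at least one configuration point approaches some $p_i$) and the diagonal-locus (where two configuration points collide). The physical boundary $\partial D_n^m / \mathrm{Sym}_m$ is a third, independent piece. The whole point of the special Borel-Moore setup from Section \ref{3'} is that the group $H^{lf,\infty}_m(\tilde{C}_{n,m},\Z)$ is built from cycles allowed to be non-compact only toward the puncture-locus, while $H^{lf,\Delta}_m(\tilde{C}_{n,m},\partial;\Z)$ is built from cycles allowed to be non-compact only toward the diagonal-locus and is relative to $\partial$. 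These two "infinities" are disjoint in a suitable stratified compactification, which is the essential geometric input behind a Poincaré--Lefschetz style duality in this situation.

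Next, I would define the pairing by choosing transverse representatives $\alpha, \beta$ of classes on the two sides, taking their (automatically compact) intersection $\alpha \cap \beta \subseteq \tilde{C}_{n,m}$, and summing signed intersection points weighted by the monodromy label $x^a d^b$ that records which sheet of the $\Z \oplus \Z$-cover contains each intersection point (measured against the fixed base lift $\tilde{\mathbf{d}}$). Compactness of $\alpha \cap \beta$ follows from the disjointness of the two types of ends; equivariance with respect to deck transformations shows that the sum defines an element of $\Z[x^{\pm}, d^{\pm}]$; independence of the chosen transverse representative is a standard bordism argument using the relative part of each homology theory.

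For non-degeneracy, I would evaluate the pairing on the natural geometric generators: the special multiforks $\{[[\tilde{\F}_e]]\}_{e \in E_{n,m}}$ from Proposition \ref{P:20} and the scans $\{[[\tilde{\D}_f]]\}_{f \in E_{n,m}}$ from Definition \ref{D:21}, both indexed by the same set $E_{n,m}$. The base-space submanifold $\F_e$ is a product of horizontal segments between consecutive punctures while $\D_f$ is a product of configurations on vertical segments between consecutive punctures; a horizontal segment $I_k^e$ lying between $p_i$ and $p_{i+1}$ meets a vertical segment $J_j$ transversely in exactly one point when $j=i$ and not at all when $j \neq i$. Unpacking the symmetric power and counting carefully, the only case in which $\F_e \cap \D_f \neq \emptyset$ is $e = f$, and then there is a single transverse intersection point in $C_{n,m}$. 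The key computational lemma is that the paths $\gamma^e$ and $\delta^f$ used to specify the lifts $\tilde{\F}_e$ and $\tilde{\D}_f$ can be chosen so that at this single intersection point the two lifts actually meet (the monodromy difference between the two base-point conventions is trivial). Hence $\langle [[\tilde{\F}_e]] , [[\tilde{\D}_f]] \rangle = \delta_{e,f}$, the Gram matrix is the identity, and the pairing is non-degenerate.

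The main obstacle I expect is the well-definedness step, specifically the verification that a non-compact cycle allowed to escape toward the punctures and a non-compact cycle allowed to escape toward the diagonal really do have a compact intersection once lifted to $\tilde{C}_{n,m}$, and that homologies of the two respective representatives do not change the weighted count. This requires a careful compactification of $C_{n,m}$ (for example, the Fulton--MacPherson or a real-oriented blow-up along the punctures and along $\Delta$) in which the two ends become disjoint closed strata, together with a check that the $\Z \oplus \Z$-local system extends compatibly across these strata. The computation of the Gram matrix on the geometric bases, by contrast, is purely local and combinatorial once the normalization of the lifts is fixed; this is where the advantage of the special Lawrence / scan formulation over the original Bigelow--Lawrence pairing shows up, since there are no non-trivial diagonal terms in the variable $d$ to worry about under the eventual specialisation to roots of unity.
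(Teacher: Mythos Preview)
Your construction of the pairing (the first two paragraphs, via transverse geometric intersection weighted by the deck-transformation monodromy, together with the observation that the two types of ends are disjoint in a stratified compactification) is correct and is exactly what the paper defers to the reference \cite{CM} under the phrase ``Lefschetz duality type techniques''. Your identification of the well-definedness step as the main technical obstacle is also accurate.

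There is, however, a gap in your non-degeneracy argument. The Gram-matrix computation $\langle [[\tilde{\F}_e]],[[\tilde{\D}_f]]\rangle=\delta_{e,f}$ only shows that the pairing is non-degenerate when restricted to the subspaces $\HH^{\infty}_{n,m}\subseteq H^{lf,\infty}_m(\tilde{C}_{n,m},\Z)$ and $\HH^{\Delta,\partial}_{n,m}\subseteq H^{lf,\Delta}_m(\tilde{C}_{n,m},\partial;\Z)$ generated by multiforks and scans; this is precisely the content of the Corollary that the paper states \emph{after} the Theorem, not of the Theorem itself. The Theorem asserts non-degeneracy on the full Borel--Moore homology groups, and nothing in the paper (nor in your proposal) establishes that multiforks and scans span those full groups. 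The paper's route to the full statement is instead the Poincar\'e--Lefschetz duality for the pair of complementary boundary strata, which yields non-degeneracy on all of $H^{lf,\infty}_m$ and $H^{lf,\Delta}_m(\,\cdot\,,\partial)$ directly; you allude to this duality as motivation in your first paragraph but then do not invoke it where it is actually needed. To close the gap you must either (i) appeal to the Poincar\'e--Lefschetz isomorphism itself for non-degeneracy, as the paper does via \cite{CM}, or (ii) supply a separate argument that the multiforks and scans generate the respective full homology groups over $\Z[x^{\pm},d^{\pm}]$, which is a non-trivial additional input.
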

From \cite{Big} and \cite{Cr} (Section 4), one can compute explicitly this pairing, given two homology classes which are represented by lifts of certain embedded submanifolds.  
\begin{remark}\label{R:bases}
Consider a ring $R$.
Let $A, B$ be two $R$-modules such that they are generated by two sets:
$$A=<a_i \mid i \in \overline{1,N}>_{R} \ \ \ \ \ B=<b_j \mid j \in \overline{1,N}>_{R}$$ and a sesquilinear form
$$< , >A \times B \rightarrow R.$$ 
Suppose that using these basis, the form has the following matrix:
$$<a_i,b_j>=\delta_{i,j}.$$
Then $\{a_i\}_{i \in \overline{1,N}}$ is a basis for $A$, $\{b_j\}_{j \in \overline{1,N}}$ is a basis for $B$ and $< , >$ is non-degenerate.
\end{remark}
\begin{proof}
Let $\sum_{i=1}^N \alpha_i a_i=0 \in A$ for $\alpha_i \in R$. Then, pairing with a fixed element we get:
$$<\sum_{i=1}^N \alpha_i a_i,b_j>=0 \Rightarrow \sum_{i=1}^N \alpha_i <a_i,b_j>=0\Rightarrow a_j=0, \  \forall j \in \overline{1,N}.$$
So $\{a_i\}_{i \in \overline{1,N}}$ is a basis for $A$. Similarly for $B$, and so the form is non-degenerate.
\end{proof}
\begin{proposition}
This pairing restricted to the special Lawrence representation and its dual, leads to a intersection form which can be calculated using the graded intersection pairing in $C_{n,m}$. Then, this intersection form has the following values:
$$<[[\tilde{\F}_e]],[[\tilde{\D}_f]]>=\delta_{e,f}.$$
 \end{proposition}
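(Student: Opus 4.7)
The strategy is to reduce the computation of the graded intersection pairing to a local count in the base configuration space $C_{n,m}$, and then verify that the unique intersection point contributes the monomial $x^0 d^0 = 1$ with the correct sign. The general formula for the pairing of two classes represented by lifts of transverse embedded submanifolds (as used in \cite{Big}, \cite{Cr}) states that
$$\langle [[\tilde{\F}_e]], [[\tilde{\D}_f]] \rangle = \sum_{y \in \F_e \cap \D_f} \epsilon_y \cdot \phi(\ell_y),$$
where $\ell_y$ is the loop in $C_{n,m}$ obtained by concatenating the path $\gamma^e$ from $\bf d$ to $\F_e$, a path inside $\F_e$ to $y$, a path inside $\D_f$ from $y$ back to the endpoint of $\delta^f$, and the reverse of $\delta^f$; here $\epsilon_y = \pm 1$ is the local orientation sign and $\phi$ is the local system from Remark \ref{R:ll}.

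First I would analyse $\F_e \cap \D_f$ in the base. By construction $\F_e$ is the symmetrisation of products of horizontal segments $I^e_k$, each lying strictly between two consecutive punctures $p_i$ and $p_{i+1}$, with exactly $e_i$ segments in the $i$-th gap. Similarly, $\D_f$ is built from configuration spaces on vertical segments $J_i$, with $f_i$ points constrained to lie on $J_i$. Since each horizontal segment $I^e_k$ intersects the vertical collection $J_1 \cup \cdots \cup J_{n-1}$ in exactly one point (on the unique $J_i$ sitting in the same gap), an intersection point of $\F_e$ and $\D_f$ must match up the $e_i$ horizontal segments in the $i$-th gap with the $f_i$ points on $J_i$ for each $i$. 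A straightforward counting argument then forces $e_i = f_i$ for all $i$, proving $\F_e \cap \D_f = \varnothing$ when $e \neq f$, so the pairing vanishes in that case. When $e = f$, exactly one intersection point $y_e \in C_{n,m}$ arises, namely the configuration of the $e_i$ horizontal–vertical crossings in each gap.

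Next, for $e = f$, I would compute the monodromy $\phi(\ell_{y_e})$. By choosing the auxiliary vertical paths $\gamma^e_k$ (from $d_k$ to a point on $I^e_k$) and $\delta^e_k$ (from $d_k$ to a point on $J_i$) to be short, nearly straight and disjoint from each other and from the punctures, the loop $\ell_{y_e}$ is freely homotopic to a configuration-space loop that never encircles a puncture and never swaps two of its points. Projected to $H_1(C_{n,m}) \cong \Z^n \oplus \Z$, all $\sigma_i$-components vanish and the $\delta$-component vanishes, so $\phi(\ell_{y_e}) = x^0 d^0 = 1$. Finally, the orientation sign $\epsilon_{y_e}$ is checked locally: at each crossing of a horizontal and vertical segment in $D_n$ the oriented tangent frames agree, so on the symmetric product the induced sign is $+1$. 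Putting these three steps together gives $\langle [[\tilde{\F}_e]], [[\tilde{\D}_f]] \rangle = \delta_{e,f}$.

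The main obstacle is step three: verifying that the particular basepoints $\tilde{\gamma}^e(1)$ and $\tilde{\delta}^e(1)$ used to lift the submanifolds land on lifts that actually meet in the fiber over $y_e$ in the canonical sheet, i.e.\ that $\phi(\ell_{y_e})$ is trivial rather than some nonzero monomial in $x$ or $d$. This hinges on a careful, essentially combinatorial, choice of the auxiliary paths $\gamma^e_k$ and $\delta^e_k$ so that the compounded loop is null-homotopic after the abelianisation $\rho$ followed by the augmentation $\epsilon$; once the paths are organised to avoid both encirclement of punctures and exchange of strands, the rest of the argument is a direct application of Remark \ref{R:bases} to conclude non-degeneracy of $\langle \ ,\ \rangle$ on $\HH^{\infty}_{n,m} \otimes \HH^{\Delta,\partial}_{n,m}$.
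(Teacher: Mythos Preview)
Your proof is correct and follows precisely the approach the paper relies on. The paper does not spell out a proof of this proposition; it simply refers to the graded-intersection recipe of \cite{Big} and \cite{Cr} (Section 4) and to \cite{CM} for the particular splitting of the boundary, and then states the result. Your argument is exactly the expected unpacking of that recipe: identify the geometric intersection $\F_e \cap \D_f$ in the base (empty unless $e=f$, a single point otherwise), then check that the associated loop $\ell_{y_e}$ is trivial under $\phi$ and that the local sign is $+1$. The ``obstacle'' you flag --- that the auxiliary paths $\gamma^e_k$ and $\delta^e_k$ must be chosen so that the concatenated loop neither winds around a puncture nor exchanges strands --- is indeed the only delicate point, and it is handled in the references by taking both families of paths to be the obvious ``vertical'' paths from the boundary basepoints $d_k$; with that choice each component loop is contractible in $D_n$ and no swapping occurs. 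Your final sentence about applying Remark~\ref{R:bases} actually belongs to the subsequent Corollary rather than to the Proposition itself, but this is harmless.
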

Combining this computation with the remark \ref{R:bases}, we conclude the following result. 
\begin{corollary}(Intersection form)\\
The special multiforks and scans form two bases for the special and dual Lawrence representations:
$$\mathscr B_{H^{\infty}_{n,m}} \subseteq H^{\Delta,\infty}_{n,m} \text { and } \mathscr B_{H^{\Delta,\partial}_{n,m}} \subseteq H^{\Delta,\partial}_{n,m}.$$
There is a non-degenerate sesquilinear topological intersection pairing:
$$< , >: H^{\infty}_{n,m}\otimes  H^{\Delta,\partial}_{n,m}\rightarrow \Z[x^{\pm}, d^{\pm}]$$
whose matrix with respect to the bases $\mathscr B_{H^{\Delta,\partial}_{n,m}}$ and $\mathscr B_{H^{\Delta,\partial}_{n,m}}$ is the identity.
\end{corollary}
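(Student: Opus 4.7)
The proof proposal is to assemble three ingredients already established in the excerpt: (i) the non-degenerate Lefschetz-type intersection pairing
$$\langle\,\cdot\,,\,\cdot\,\rangle\colon H^{lf,\infty}_{m}(\tilde{C}_{n,m},\Z)\otimes H^{lf,\Delta}_{m}(\tilde{C}_{n,m},\partial;\Z)\longrightarrow \Z[x^{\pm},d^{\pm}]$$
on the ambient relative Borel--Moore homologies; (ii) the explicit geometric computation
$\langle [[\tilde{\F}_e]],[[\tilde{\D}_f]]\rangle=\delta_{e,f}$ from the Proposition immediately preceding the corollary; and (iii) the purely algebraic Remark \ref{R:bases}, which turns a Kronecker--delta pairing on two generating sets into a simultaneous basis and non-degeneracy statement. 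The plan is simply to feed (i)+(ii) into (iii) with the correct ring and modules.

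First, I would restrict the ambient pairing from (i) to the submodules $\HH^{\infty}_{n,m}\subseteq H^{lf,\infty}_m$ and $\HH^{\Delta,\partial}_{n,m}\subseteq H^{lf,\Delta}_m(\tilde{C}_{n,m},\partial)$ that are generated over $R=\Z[x^{\pm},d^{\pm}]$ by the special multiforks $\{[[\tilde{\F}_e]]\}_{e\in E_{n,m}}$ and by the scans $\{[[\tilde{\D}_f]]\}_{f\in E_{n,m}}$, respectively. By the preceding Proposition, the matrix of the restricted pairing in these generating sets is the identity.

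Next, I would apply Remark \ref{R:bases} with $A=\HH^{\infty}_{n,m}$, $B=\HH^{\Delta,\partial}_{n,m}$, $a_e=[[\tilde{\F}_e]]$ and $b_f=[[\tilde{\D}_f]]$, indexed by the finite set $E_{n,m}$. The remark gives simultaneously: linear independence of each generating set (so they are bases, confirming the basis claim), and non-degeneracy of the restricted pairing (since a pairing with identity matrix on two spanning sets is automatically non-degenerate). Sesquilinearity is inherited from the ambient pairing, which is sesquilinear with respect to the deck-transformation action of $\Z[x^{\pm},d^{\pm}]$.

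The only subtle point in the whole argument is whether the proposition giving $\langle[[\tilde{\F}_e]],[[\tilde{\D}_f]]\rangle=\delta_{e,f}$ can be invoked as a black box; but this has already been stated in the excerpt and referenced to \cite{Big} and \cite{Cr}, so no additional intersection calculation needs to be redone here. I do not foresee any real obstacle: the corollary is essentially a one-line consequence once the Kronecker-delta formula is in hand, and the main expected difficulty\,---\,the geometric computation itself\,---\,lies upstream in the preceding proposition, not in the corollary being proved.
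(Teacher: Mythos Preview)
Your proposal is correct and matches the paper's own argument exactly: the paper simply states that the corollary follows by combining the Kronecker-delta computation from the preceding proposition with Remark~\ref{R:bases}. There is nothing to add.
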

\section{Topological intersection pairing at roots of unity}\label{5}
\subsection{Truncated dual special Lawrence representation}
We are interested in a dual space which correspond to the truncated special Lawrence representation. We will defined a truncated version in the homology relative to the boundary as follows.

\begin{definition}(Truncated special dual Lawrence representation)\\
We define the following subspace inside the dual special Lawrence representation, generated by scans with one multiplicity at most $N$:
$$\HH^{\Delta,\partial, N}_{n,m}:=<[[\tilde{\D}_f]] \ | \  f\in E^{\geq N}_{n,m}]>_{\Z[x^{\pm}, d^{\pm}]}\subseteq \HH^{\Delta,\partial}_{n,m}.$$
We call this homology the truncated special dual Lawrence representation.
\end{definition}
\begin{remark}The topological pairing vanishes on the subspaces corresponding to the partitions from $E^{\geq N}_{n,m}$ and $E^{N}_{n,m}$:
$$< , >: H^{\infty,\geq N}_{n,m} \otimes  H^{\Delta,\partial,N}_{n,m}\rightarrow \Z[x^{\pm}, d^{\pm}]$$
\end{remark}
\begin{proposition}\label{P:1} 
There is a well defined topological pairing induced from the pairing $<, >$, on the truncated special Lawrence representations:
$$\ll , \gg: H^{\infty, N}_{n,m} \otimes  H^{\Delta,\partial,N}_{n,m}\rightarrow \Z[x^{\pm}, d^{\pm}]$$
$$\ll \{ \{ \ [[\tilde{\F}_e]] \} \} \ ,[[\tilde{\D}_f]]\gg=\delta_{e,f}, \ \forall e,f \in E^{N}_{n,m}.$$
\end{proposition}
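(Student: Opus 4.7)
The plan is to deduce the proposition directly from the identity-matrix form of the ambient pairing (the intersection-form corollary at the end of Section~\ref{4'}), using only $\Z[x^{\pm},d^{\pm}]$-sesquilinearity together with the fact that the index sets labelling the two truncated subspaces are disjoint subsets of $E_{n,m}$.

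First I would establish the vanishing remark preceding the proposition, namely that $\langle\,,\,\rangle$ kills $\HH^{\infty,\geq N}_{n,m}\otimes\HH^{\Delta,\partial,N}_{n,m}$. The subspace $\HH^{\infty,\geq N}_{n,m}$ is spanned by the special multiforks $[[\tilde{\F}_e]]$ with $e\in E^{\geq N}_{n,m}$, while $\HH^{\Delta,\partial,N}_{n,m}$ is spanned by the scans $[[\tilde{\D}_f]]$ with $f$ in the complementary set $E^{N}_{n,m}\subseteq E_{n,m}$. For any such pair one has $e\neq f$, so the diagonal corollary gives $\langle[[\tilde{\F}_e]],[[\tilde{\D}_f]]\rangle=\delta_{e,f}=0$ on every pair of generators, and sesquilinearity propagates this vanishing to arbitrary elements of the two subspaces.

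Second, I would use this vanishing to verify that the formula
\[
\ll\{\{x\}\},y\gg\ :=\ \langle x,y\rangle,\qquad x\in\HH^{\infty}_{n,m},\ y\in\HH^{\Delta,\partial,N}_{n,m},
\]
descends to a well-defined $\Z[x^{\pm},d^{\pm}]$-sesquilinear pairing on the quotient $\HH^{\infty,N}_{n,m}=\HH^{\infty}_{n,m}/\HH^{\infty,\geq N}_{n,m}$. Indeed, if $x-x'\in\HH^{\infty,\geq N}_{n,m}$ then $\langle x-x',y\rangle=0$ by the previous step, so the right-hand side depends only on the class $\{\{x\}\}$. The asserted identity $\ll\{\{[[\tilde{\F}_e]]\}\},[[\tilde{\D}_f]]\gg=\delta_{e,f}$ for $e,f\in E^{N}_{n,m}$ is then an immediate rewriting of the diagonal form on generators.

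The only point that requires care — and really the sole potential obstacle — is the bookkeeping of indexing conventions: the dual truncated subspace $\HH^{\Delta,\partial,N}_{n,m}$ must be taken to be generated by the scans labelled by partitions in $E^{N}_{n,m}$, complementary to those defining $\HH^{\infty,\geq N}_{n,m}$, for otherwise a pair $(e,f)$ with $e=f\in E^{\geq N}_{n,m}$ would produce a non-zero contribution and the pairing would fail to descend to the quotient. Once this alignment with the preceding corollary is made explicit, no further computation is needed; the result is essentially a linear-algebra consequence of having an identity Gram matrix in the chosen bases.
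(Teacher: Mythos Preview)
Your argument is correct and matches the paper's approach: the paper does not give an explicit proof of this proposition but rather treats it as an immediate consequence of the preceding vanishing remark together with the identity-matrix corollary at the end of Section~\ref{4'}, which is precisely the linear-algebra reduction you carry out. Your final paragraph is also on point: the definition of $\HH^{\Delta,\partial,N}_{n,m}$ in the paper is written with the index set $E^{\geq N}_{n,m}$, but as you observe this must be read as $E^{N}_{n,m}$ for the pairing to descend to the quotient and for the stated $\delta_{e,f}$ formula (with $e,f\in E^{N}_{n,m}$) to make sense.
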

\begin{corollary}(Non-degenerate pairing at roots of unity)\label{C:pairing}

The topological intersection form $\ll , \gg$ induces a non-degenerate intersection pairing, for any specialisation $\psi_{\xi,\lambda}$ corresponding to the variable $\lambda \in \C$:
$$\ll , \gg |_{\psi_{\xi,\lambda}}: H^{\infty, N}_{n,m}|_{\psi_{\xi,\lambda}} \otimes  H^{\Delta,\partial,N}_{n,m} |_{\psi_{\xi,\lambda}}\rightarrow \C.$$

\end{corollary}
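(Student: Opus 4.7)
The plan is to reduce the statement almost entirely to Proposition \ref{P:1} and to track its content through a change of coefficients. The corollary asks that after specialising along $\psi_{\xi,\lambda}\colon \Z[x^{\pm},d^{\pm}]\to\C$, the induced pairing $\ll,\gg|_{\psi_{\xi,\lambda}}$ remains non-degenerate over $\C$. Since non-degeneracy of a sesquilinear form is detected by its matrix in a pair of bases, the proof essentially reduces to checking that the matrix computed in Proposition \ref{P:1} survives specialisation.

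First, I would recall that Proposition \ref{P:1} gives, over $\Z[x^{\pm},d^{\pm}]$, the identity
$$\ll \{\{[[\tilde{\F}_e]]\}\}, [[\tilde{\D}_f]]\gg = \delta_{e,f}, \qquad \forall e,f \in E^{N}_{n,m}.$$
In particular, the matrix of $\ll,\gg$ in the generating families $\mathscr B_{\HH^{\infty,N}_{n,m}}$ and $\mathscr B_{\HH^{\Delta,\partial,N}_{n,m}}$ is the identity matrix. By Remark \ref{R:bases}, this already forces these generating families to be honest bases of the respective $\Z[x^{\pm},d^{\pm}]$-modules and forces $\ll,\gg$ itself to be non-degenerate.

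Second, I would apply the specialisation $\psi_{\xi,\lambda}$. By Notation \ref{N:spec}, the specialised modules $\HH^{\infty,N}_{n,m}|_{\psi_{\xi,\lambda}}$ and $\HH^{\Delta,\partial,N}_{n,m}|_{\psi_{\xi,\lambda}}$ are obtained by tensoring with $\C$ along $\psi_{\xi,\lambda}$, and the specialised families $\mathscr B_{\HH^{\infty,N}_{n,m}}|_{\psi_{\xi,\lambda}}$ and $\mathscr B_{\HH^{\Delta,\partial,N}_{n,m}}|_{\psi_{\xi,\lambda}}$ are $\C$-bases of these modules. Sesquilinearity of $\ll,\gg$ over $\Z[x^{\pm},d^{\pm}]$ implies by extension of scalars that $\ll,\gg|_{\psi_{\xi,\lambda}}$ is $\C$-sesquilinear, and since $\psi_{\xi,\lambda}(\delta_{e,f})=\delta_{e,f}$, its matrix in the two specialised bases is still the identity. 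A final appeal to Remark \ref{R:bases}, applied now with ring $R=\C$, concludes that $\ll,\gg|_{\psi_{\xi,\lambda}}$ is non-degenerate.

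I do not anticipate any real obstacle: the entire point of switching from the Lawrence representation of Section \ref{3} to the special Lawrence representation of Section \ref{3'}, by splitting the Borel--Moore boundary into the puncture part and the diagonal part, was precisely to ensure that the intersection matrix is the identity rather than a diagonal matrix of nontrivial polynomials in $d$. Had the diagonal entries been non-unit Laurent polynomials in $d$, the specialisation $\psi_{\xi,\lambda}(d)=-\xi^{2}$ at a $2N$-th root of unity could potentially make them vanish and destroy non-degeneracy, exactly the pathology flagged at the beginning of Section \ref{3'}. With the identity matrix on the nose, specialisation at any complex value of $\lambda$ is harmless, and the non-degeneracy passes to the root-of-unity setting without further analysis.
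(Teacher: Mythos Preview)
Your argument is correct and matches the paper's implicit reasoning: the corollary is stated without proof precisely because Proposition~\ref{P:1} gives the identity matrix for $\ll,\gg$ in the multifork/scan bases, and the identity matrix trivially survives any specialisation. Your invocation of Remark~\ref{R:bases} and the contextual remark about why the special Lawrence representation was introduced are apt and faithful to the paper's narrative in Section~\ref{3'}.
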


\section{Topological model for the coloured Alexander invariants}\label{6}
In this part we aim to give a topological model for the ADO invariant. The strategy is to start with a link and present it as a closure of a braid. The coloured Alexander invariant of the link is then obtained using a Reshetikhin-Turaev type construction applied on the link diagram with the first strand that is cut. In order to pass towards the topological part, we split the diagram into three main levels and we investigate the functor applied to each of these parts. Let us make this precise.

\begin{theorem}({\bf Topological model for coloured Alexander invariants})\label{C1:pr2}
Let $N \in \mathbb N$ be the colour of the invariant. Let us consider $n$ to be a natural number. 

Then there exist two families of homology classes $$\mathscr F^{N,\lambda}_n \in H^{\infty,N}_{2n-1, (n-1)(N-1)}|_{\psi_{\xi,\lambda}}  \ \ \ \ \ \  \mathscr G^{N,\lambda}_n \in H^{\Delta,\partial,N}_{2n-1, (n-1)(N-1)}|_{\psi_{\xi,\lambda}}$$ such that if $L$ is a link and $\beta \in B_{n}$ with $L=\hat{\beta} $ (normal closure), the $N^{th}$ coloured Alexander invariant has the formula:
$$\Phi_N(L,\lambda)= \xi^{(N-1)\lambda w(\beta_n)} \ll (\beta_n \cup \mathbb I_{n-1})) \mathscr{F}^N_n , \mathscr{G}^N_n\gg|_{\psi_{\xi,\lambda}}, \ \ \ \forall \lambda \in \C \setminus \Z.$$ 
\end{theorem}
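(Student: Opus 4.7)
The plan is to take the Reshetikhin--Turaev description of $\Phi_N(L,\lambda)$ from Proposition \ref{defADO} and translate each of the three horizontal layers (bottom coevaluations, braid, top evaluations) into its homological counterpart via the identification $\Theta^{N,n}_{\xi,\lambda}$ of Notation \ref{N:2}. Concretely, the $(1,1)$-tangle $T_L$ obtained by cutting one strand of $\hat{\beta_n}$ is decomposed as
\[
T_L = \bigl(\mathrm{Id}_{U_N^{\lambda}} \otimes \tev^{\,n-1}_{\lambda}\circ\cdots\circ \tev^{\,1}_{\lambda}\bigr)\circ \bigl(\varphi^{\xi,\lambda}_n(\beta_n)\otimes \mathbb{I}_{n-1}\bigr)\circ \bigl(\mathrm{Id}_{U_N^{\lambda}} \otimes \tcoev^{\,n-1}_{\lambda}\circ\cdots\circ \tcoev^{\,1}_{\lambda}\bigr),
\]
viewed as an endomorphism of $U_N^{\lambda}$; the scalar $\langle\mathscr F_\lambda(T_L)\rangle$ is the ADO invariant up to the framing factor $\xi^{(N-1)\lambda w(\beta_n)}$. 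After identifying $(U_N^{\lambda})^*$ with $U_N^{\lambda}$ (with a twist by $K^{1-N}$) via the pivotal structure \eqref{E:DualityForCat}, the image of the coevaluation layer is a well-defined vector in the highest-weight space $W^{\xi,\lambda}_{2n-1,(n-1)(N-1)}$, and the composition with the evaluation layer is a linear functional on that same space.

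The first concrete step is to compute the vector
\[
F^{N,\lambda}_n := \bigl(\mathrm{Id} \otimes \tcoev^{\,n-1}_{\lambda}\circ\cdots\circ \tcoev^{\,1}_{\lambda}\bigr)(1) \;\in\; W^{\xi,\lambda}_{2n-1,(n-1)(N-1)}
\]
expanded in the basis $\mathscr B_{W^{\xi,\lambda}_{2n-1,(n-1)(N-1)}}$ from Proposition \ref{P:bqu1}. The point is that each successive $\tcoev$ inserts $\sum_i v_i \otimes v_i^*$, so after using the pivotal identification one obtains a sum over partitions $e \in E^N_{n,m}$ of standard tensor-product vectors with explicit coefficients in $\mathbb{Z}[\xi^{\pm1},\xi^{\pm\lambda}]$. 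I would then define
\[
\mathscr F^{N}_n := (\Theta^{N,n}_{\xi,\lambda})^{-1}(F^{N,\lambda}_n)\;\in\; \HH^{\infty,N}_{2n-1,(n-1)(N-1)}|_{\psi_{\xi,\lambda}}.
\]
Dually, by Corollary \ref{C:pairing} the intersection form $\ll\cdot,\cdot\gg$ is non-degenerate after specialisation; so the evaluation layer, viewed as a functional on $W^{\xi,\lambda}_{2n-1,(n-1)(N-1)}$, is represented by a unique class
\[
\mathscr G^{N}_n \in \HH^{\Delta,\partial,N}_{2n-1,(n-1)(N-1)}|_{\psi_{\xi,\lambda}},
\]
and I would make this class explicit by expanding $\tev^{\,n-1}_{\lambda}\circ\cdots\circ \tev^{\,1}_{\lambda}$ (again using the pivotal identification) in the basis dual to $\mathscr B_{W^{\xi,\lambda}_{2n-1,(n-1)(N-1)}}$, then transporting through $\Theta^{N,n}_{\xi,\lambda}$ and the duality of Corollary \ref{C:pairing}.

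The second step is to apply the braid layer. Because $\Theta^{N,n}_{\xi,\lambda}$ intertwines the quantum action $\varphi^{\xi,\lambda}_{2n-1,(n-1)(N-1)}$ and the homological action $l^{\infty,N}_{2n-1,(n-1)(N-1)}|_{\psi_{\xi,\lambda}}$, acting by $\beta_n\otimes \mathbb{I}_{n-1}$ on the quantum side is exactly the same as acting by $\beta_n\cup \mathbb{I}_{n-1}$ on the homological side. Combining this with the previous step and the tautology
\[
\langle \mathscr F_\lambda(T_L)\rangle \cdot v_0 = \bigl(\mathrm{Id}\otimes \tev_{\lambda}^{\bullet}\bigr)\circ \varphi^{\xi,\lambda}_n(\beta_n\otimes \mathbb{I}_{n-1})\,F^{N,\lambda}_n
\]
(the scalar being read off by pairing with the dual functional), I get
\[
\langle \mathscr F_\lambda(T_L)\rangle = \ll (\beta_n\cup \mathbb{I}_{n-1})\mathscr F^{N}_n,\;\mathscr G^{N}_n\gg_{\psi_{\xi,\lambda}}.
\]
Multiplying by the framing factor $\xi^{(N-1)\lambda w(\beta_n)}$ from Proposition \ref{defADO} yields the claimed formula.

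The main obstacle will be the bookkeeping at the two ends of the tangle: matching the basis on the quantum side (given by $\phi_{\xi,\lambda}\bigl(\bar v^{\xi,\lambda}_{\iota(e)}\bigr)$ from Proposition \ref{P:bqu1}) with the homological basis $\mathscr B_{\HH^{\infty,N}_{2n-1,(n-1)(N-1)}}$, correctly absorbing the twist $K^{N-1}$ that appears in $\coev_\lambda$ and $\ev_\lambda$, and tracking the swap $\tau_{2,2i}$ used in the definition of $\ev^i_\lambda$ and $\tcoev^i_\lambda$. This is where the exponent $(N-1)\lambda w(\beta_n)$ enters naturally as a framing anomaly, and where one must verify that the specialisation of coefficients $\psi_{\xi,\lambda}$ commutes with all the structural maps so that no residual denominators spoil the intersection pairing. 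Once this combinatorial check is in place, the three steps above assemble directly into the statement of Theorem \ref{C1:pr2}.
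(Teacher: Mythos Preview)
Your overall architecture matches the paper's: split the $(1,1)$-tangle into coevaluation/braid/evaluation layers, transport the coevaluation to a homology class via $\Theta^{N,n}_{\xi,\lambda}$, represent the evaluation functional by a dual class using the non-degenerate pairing of Corollary~\ref{C:pairing}, and intertwine the braid via Corollary~\ref{C:Ident}. However, there is a genuine gap at the very first step, and it is precisely the main technical content of the paper's proof.

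You write that ``after identifying $(U_N^{\lambda})^*$ with $U_N^{\lambda}$ (with a twist by $K^{1-N}$) via the pivotal structure, the image of the coevaluation layer is a well-defined vector in the highest-weight space $W^{\xi,\lambda}_{2n-1,(n-1)(N-1)}$''. This is not correct as stated: $U_N^{\lambda}$ is \emph{not} self-dual as a $U_{\xi}(sl(2))$-module (its dual is $U_N(2N-2-\lambda)$, cf.\ the remark in Step~1 of the paper), so no pivotal twist gives a module isomorphism $(U_N^{\lambda})^*\simeq U_N^{\lambda}$. Any identification you choose is only $\C$-linear, and under a generic $\C$-linear identification the image of $v^{n-1}_{Coev}$ has no reason to be annihilated by $E$. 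The paper's Step~1 is an inductive construction of a \emph{specific} $\C$-linear map $\psi^N_n=f_1\otimes\cdots\otimes f_{n-1}$, with coefficients $c^k_m$ determined by a recursion, designed exactly so that $(Id^{\otimes n}\otimes\psi^N_n)(v_0\otimes v^{n-1}_{Coev})$ lands in $W^{\xi,\lambda}_{2n-1,(n-1)(N-1)}$. Step~2 then uses that one starts from the highest-weight vector $v_0$ (so $E(v_0)=0$), and Step~3 shows that inserting $\psi^N_n$ and $(\psi^N_n)^{-1}$ does not change the invariant because $\psi^N_n$ acts only on the last $n-1$ tensor factors, where the braid is $\mathbb I_{n-1}$. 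These three steps are not ``bookkeeping at the two ends of the tangle'' as you suggest; they are the substance that makes $\mathscr F^{N,\lambda}_n$ exist at all. Without them your definition $\mathscr F^{N}_n:=(\Theta^{N,n}_{\xi,\lambda})^{-1}(F^{N,\lambda}_n)$ is not well-posed, since $F^{N,\lambda}_n$ does not lie in the domain $W^{\xi,\lambda}_{2n-1,(n-1)(N-1)}$ of $\Theta^{N,n}_{\xi,\lambda}$.
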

\begin{proof}
Let $\lambda \in \C \setminus \Z$. This means that $\lambda$ is generic with respect to $\xi$.
Consider $L$ be a link as in the statement and let us denote by $\hat{L}$ the $(1,1)$-tangle obtained from $L$ by cutting its first strand. In other words, $\hat{L}$ is the partial closure of the braid $\beta_n$, corresponding to the last $n-1$ strands.
We remind the definition of the ADO invariant, as presented in proposition \ref{defADO}:
\begin{equation}
\begin{aligned}
\Phi_{N}(L,\lambda)=\xi^{(N-1)\lambda w(\beta_n)}\cdot \ \ \ \ \ \ \ \ \ \ \ \ \ \ \ \  \ \ \ \ \ \ \ \ \ \ \ \ \ \ \ \ \ \ \ \ \ \ \ \ \ \ \ \ \ \ \ \ \ \ \\
<\left(Id_{U^{\lambda}_N}\otimes \left(\ev^{n-1}_{\lambda} \circ...\circ \ev^{1}_{\lambda}\right)\right) \circ \left( \varphi_{2n-1}^{\xi,\lambda}(\beta_n\otimes \mathbb I_{n-1} \right) \circ \\
\ \ \ \ \ \ \ \ \ \ \ \ \ \ \ \circ \left( Id_{U^{\lambda}_N} \otimes \left(\tcoev^{n-1}_{\lambda} \circ....\circ \tcoev^{1}_{\lambda}\right) \right)>.
\end{aligned}
\end{equation}

{{\bf Strategy}} \ \ We will interpret homologically each of the three morphisms that occur in this formula.
Our strategy is to show that this construction can be seen through a particular highest  weight space inside $U_N(\lambda)^{\otimes 2n-1}$. After that, we use the truncated special Lawrence representation as a topological counterpart for the braid. Let us make this precise.

In order to compute the scalar that comes from the normalised Reshetikhin-Turaev functor, we will evaluate the morphism corresponding to
$$\mathscr F_{\lambda}(\hat{L}) \in End_{U_{\xi}(sl(2))}(U_N(\lambda)).$$
For a reason that it will be motivated in the next step, it will be important to compute this scalar starting from the highest weight vector $v_0$.
\begin{notation}
Consider the projection of $U_N(\lambda)$ onto the subspace generated by $v_0$ defined as follows:
$$\pi_{v_0}: U_N(\lambda)\rightarrow\C$$
$$\pi_{v_0}(v_i)=\begin{cases}
1, \ \ \ \ \ \ \  i=0\\
0, \ \ \ \ \ \ \ \ otherwise.\\
\end{cases}$$
\end{notation}
With this notation, the ADO invariant has the following formula:
\begin{equation}\label{DEFINITION}
\begin{aligned}
\Phi_{N}(L,\lambda)=\xi^{(N-1)\lambda w(\beta_n)}\cdot
 \pi_{v_0} \circ \left(Id_{U^{\lambda}_N}\otimes \left(\ev^{n-1}_{\lambda} \circ...\circ \ev^{1}_{\lambda}\right)\right) \circ \ \ \ \ \ \ \ \ \ \ \ \ \ \\
\left( \varphi_{2n-1}^{\xi,\lambda}(\beta_n\otimes \mathbb I_{n-1} )\right) \circ \left( Id_{U^{\lambda}_N} \otimes \left(\tcoev^{n-1}_{\lambda} \circ....\circ \tcoev^{1}_{\lambda}\right) \right)(v_0). \ \ \ \ \ \ \ \ 
\end{aligned}
\end{equation}
We will start with the bottom part of the $(1,1)-$tangle, which corresponds to the first morphism from the previous formula:
\begin{equation}\label{eq:-1}
\begin{aligned}
\mathscr F_{\lambda} \left(\uparrow \tikz[x=1mm,y=1mm,baseline=0.5ex]{\draw[<-] (0,3) .. controls (0,0) and (3,0) .. (3,3); \draw[<-] (-3,3) .. controls (-3,-3) and (6,-3) .. (6,3); \draw[draw=none, use as bounding box](-0.5,0) rectangle (3,3);}\right)= Id_{U^{\lambda}_N} \otimes \left(\tcoev^{n-1}_{\lambda} \circ....\circ \tcoev^{1}_{\lambda}\right) \in \ \ \ \ \ \ \ \ \ \ \ \ \ \ \ \ \ \ \ \ \ \ \  \\
\in \textit{ \large Hom}_{U_{\xi}(sl(2))}\Big( \ U_N(\lambda) \ \text{\Large ,} \  U_N(\lambda)^{\otimes n}\otimes (U_N(\lambda)^{*})^{\otimes n-1} \Big).
\end{aligned}
\end{equation}
\subsection{{\bf Step 1-Normalising the coevaluation}}
\

 In this first technical step, we deal with the part of the diagram corresponding to the $n-1$ strands that are connected through caps, namely:
\begin{equation*}
\begin{aligned}
\mathscr F_{\lambda} \left( \tikz[x=1mm,y=1mm,baseline=0.5ex]{\draw[<-] (0,3) .. controls (0,0) and (3,0) .. (3,3); \draw[<-] (-3,3) .. controls (-3,-3) and (6,-3) .. (6,3); \draw[draw=none, use as bounding box](-0.5,0) rectangle (3,3);}\right)= \left(\tcoev^{n-1}_{\lambda} \circ....\circ \tcoev^{1}_{\lambda}\right) \in \ \ \ \ \ \ \ \ \ \ \ \ \ \ \ \ \ \ \ \ \ \ \ \ \ \ \ \ \ \ \ \ \ \ \ \ \ \ \ \ \\
\in \textit{ \large Hom}_{U_{\xi}(sl(2))} \Big( \ \C \ \text{\Large{,}} \ U_N(\lambda)^{\otimes n-1}\otimes (U_N(\lambda)^{*})^{\otimes n-1}\Big).
\end{aligned}
\end{equation*}
The interesting part is that this is a morphism of $U_{\xi}(sl(2))$-representations.
This means, that it commutes with the actions of the generators $E,K$ of the quantum group.
We denote the following vector by $$v^{n-1}_{Coev}:=\left(\tcoev^{n-1}_{\lambda} \circ....\circ \tcoev^{1}_{\lambda}\right)(1). $$
Using the formulas for the dualities of the quantum group given in \ref{E:DualityForCat}, we have:
\begin{equation}\label{eq:0}
v^{n-1}_{Coev}=\sum_{i_1,...,i_{n-1}=0}^{N-1} v_{i_1}\otimes ... \otimes v_{i_{N-1}} \otimes (v_{i_{N-1}})^* \otimes ... \otimes (v_{i_1})^*.
\end{equation}
Coming back to the properties of the Reshetikhin-Turaev construction, since $\C$ is the module with the trivial action of the quantum group, using the functoriality property we obtain:
$$K(v^{n-1}_{Coev})=v^{n-1}_{Coev}$$
$$E(v^{n-1}_{Coev})=0$$
We notice that $v^{n-1}_{Coev}$ looks like a vector in a particular highest weight space, but the issue come from the fact that this is not inside the tensor power of a fixed representation (it is in a tensor power of a certain representation and its dual ). This part deals with the subtlety that the representation $U_N^{\lambda}$ is not self-dual over the quantum group $U_{\xi}(sl(2))$. However, its dual has the same dimension and they are isomorphic as vector spaces over $\C$.

The idea in the sequel is to compose the co-evaluation with an additional morphism $$\psi^N_n:(U_N(\lambda)^{*})^{\otimes n-1}\rightarrow U_N(\lambda)^{\otimes n-1}$$ in such way so that:
$$Id_{U_N(\lambda)}^{\otimes n-1}\otimes \psi^N_n $$
sends the vector $v^{n-1}_{Coev}$ into a highest weight space inside $U_N(\lambda)^{\otimes 2n-1}$.
In other words, one requires the following conditions:
\begin{equation}\label{eq:1}
\begin{aligned}
K\curvearrowright \left( Id_{U_N(\lambda)}^{\otimes n-1}  \ \otimes \ \varphi^{N}_{n}\right)(v^{n-1}_{Coev}) \simeq \left( Id_{U_N(\lambda)}^{\otimes n-1}  \ \otimes \ \varphi^{N}_{n}\right)(v^{n-1}_{Coev})\\
E \curvearrowright \left( Id_{U_N(\lambda)}^{\otimes n-1}  \ \otimes \ \varphi^{N}_{n}\right)(v^{n-1}_{Coev})=0. \ \ \ \ \ \ \ \ \ \ \ \ \ \ \ \ \ \ \ \ \ \ \ \ \ \ \ \ \ \ \ \ 
\end{aligned}
\end{equation}
Here, we used the following convention.
\begin{notation}
If two vectors $v$ and $w$ are proportional, we denote it by: 
$$v \simeq w.$$
\end{notation}

We will define $\psi^N_n$ as a tensor power of morphisms corresponding to each coordinate.
To begin with, we construct a sequence of isomorphisms of $\C$-vector spaces:
$$\{ f_i: U_N(\lambda)^{\star}\rightarrow U_N(\lambda)\mid i\in \overline{1,n-1} \}$$ and then use their tensor product:
$$\psi^N_n=f_1\otimes...\otimes f_{n-1}.$$ 
\begin{remark*}
There is the following isomorphism of $U_{\xi}(sl(2))$-representations:
$$f_{\lambda}:U_N(\lambda)^{*}\rightarrow U_N(2N-2-\lambda)$$
$$f_{\lambda}(v_k^*) \simeq v_{N-1-k}.$$
\end{remark*}
We search $f_k: U_N(\lambda)^{\star}\rightarrow U_N(\lambda)$ of the following form:
\begin{equation}\label{eq:2}
f_k(v_i^*)= c^{i}_k \cdot  v_{N-1-i}.
\end{equation}
\begin{proposition} For any sequence of functions $\{f_k | k \in \overline{1,n-1}\}$ as in the equation \ref{eq:2}, one has that $\psi^N_n(v^{n-1}_{Coev})$ is a weight vector in $U_N(\lambda)^{\otimes n-1} $ for the operator $K$.
\end{proposition}
\begin{proof}
Let us consider such a sequence that leads to the function:
$$\psi^N_n=f_1\otimes...\otimes f_{n-1}.$$
From the equation \ref{eq:0}, we obtain that:
\begin{equation*}
\begin{aligned}
\left( Id_{U_N(\lambda)}^{\otimes n-1}  \ \otimes \ \psi^N_n\right)(v^{n-1}_{Coev})=\sum_{i_1,...,i_{n-1}=0}^{N-1} \left( \prod_{k=1}^{n-1}c_{k}^{i_k} \right) \ \ \ \ \ \ \ \ \ \ \ \ \ \ \ \ \ \ \ \ \ \ \ \ \ \ \ \ \ \ \ \ \ \ \ \ \ \ \ \\
 \left( v_{i_1}\otimes ... \otimes v_{i_{n-1}} \otimes v_{N-1-i_{n-1}} \otimes ... \otimes v_{N-1-i_1} \right).
\end{aligned}
\end{equation*}
Now we will compute the action of $K$ on this vector.
$$K\left( Id_{U_N(\lambda)}^{\otimes n-1}  \ \otimes \ \psi^N_n \right)(v^{n-1}_{Coev})=\sum_{i_1,...,i_{n-1}=0}^{N-1} \left( \prod_{k=1}^{n-1}c_{k}^{i_k} \right) q^{(n-1)(2\lambda-2(N-1))}  \ \ \ \ \ \ \ \ \ \ $$
$$ \ \ \ \ \ \ \ \ \ \ \ \ \ \ \ \ \ \ \ \ \ \ \ \ \ \ \ \ \ \ \ \ \ \ \ \ \ \cdot \left( v_{i_1}\otimes ... \otimes v_{i_{n-1}} \otimes v_{N-1-i_{n-1}} \otimes ... \otimes v_{N-1-i_1} \right)=$$
$$ \ \ \ \ \ \ \ \ \ \ \ \ \ \ \ \ \ \ \ \ \ \ \ \ \ \ \  \ \ \ \ \ \ \ \ =q^{2(n-1)\lambda-2(n-1)(N-1)} \left( Id_{U_N(\lambda)}^{\otimes n-1}  \ \otimes \ \psi^N_n \right)(v^{n-1}_{Coev}).$$
\end{proof}
This shows that the composition between the normalising function $ Id_{U_N(\lambda)}^{\otimes n-1}  \ \otimes \ \psi^N_n $ and the coevaluation leads indeed in a particular weight space:
$$\left( Id_{U_N(\lambda)}^{\otimes n-1}  \ \otimes \ \psi^N_n \right)(v^{n-1}_{Coev}) \in V^{\xi,\lambda}_{2(n-1),(n-1)(N-1)}\subseteq U_N(\lambda)^{\otimes 2(n-1)}.$$ 

\begin{lemma}
There exist a sequence of coefficients for the functions $f_k$ such that the vector $$\left( Id_{U_N(\lambda)}^{\otimes n-1}  \ \otimes \ \psi^N_n \right)(v^{n-1}_{Coev})$$ belongs to the corresponding highest weight space $$W^{\xi,\lambda}_{2(n-1),(n-1)(N-1)}\subseteq U_N(\lambda)^{\otimes 2(n-1)}.$$ 
\end{lemma}
\begin{proof}
We construct this sequence by induction on $n$. 

For $m \in \N$, we consider the induction hypothesis $P(m)$: there exist functions $f_1,...,f_m$ as in equation \ref{eq:2} such that:
$$E \curvearrowright \left( Id_{U_N(\lambda)}^{\otimes m}  \ \otimes \ ( f_1 \otimes ... \otimes f_{m}) \right)(v^{m}_{Coev})=0.$$
The operator $E$ acts on a tensor product using the iterated comultiplication:
$$\Delta^{2m}(E)=\sum_{j=1}^{2m}Id^{j-1}\otimes E \otimes K^{2m-j}.$$
This means that one requires:
\begin{equation}\label{eq:3}
\begin{aligned}
\left( \sum_{j=1}^{2m}Id^{j-1}\otimes E \otimes K^{m-j-1} \right) \curvearrowright \ \ \ \ \ \ \ \ \ \ \ \ \ \ \ \ \ \ \ \ \ \ \ \ \ \ \ \ \ \ \ \ \ \ \ \ \ \ \ \ \ \ \ \ \ \ \ \ \ \ \ \ \\
\left( \sum_{i_1,...,i_{m}=0}^{N-1} \left( \prod_{k=1}^{m}c_{k}^{i_k} \right) \left( v_{i_1}\otimes ... \otimes v_{i_{m}} \otimes v_{N-1-i_{m}} \otimes ... \otimes v_{N-1-i_1} \right) \right) =0.
\end{aligned}
\end{equation}

{ \bf Case m=1}
We search for $f_1: U_N(\lambda)^{\star}\rightarrow U_N(\lambda)$ of the form:
$$f_1(v_i^*)= c^{i}_1 \cdot  v_{N-1-i}$$ with the requirement 
\begin{equation}
\left( E \otimes K + Id \otimes E \right) \left( \sum_{i_1=0}^{N-1} c_1^{i_1} v_{i_1} \otimes v_{N-1-i_1} \right)  =0.
\end{equation}
This is equivalent to:
\begin{equation}
\begin{aligned}
\sum_{i_1=0}^{N-2} c_1^{i_1+1} [\lambda-i_1]_{\xi} q^{\lambda-2(N-2-i_1)} \ v_{i_1} \otimes v_{N-2-i_1}+\\
+ \sum_{i_1=0}^{N-1} c_1^{i_1} [\lambda+1-(N-1-i_1)]_{\xi} \ v_{i_1} \otimes v_{N-2-i_1}  =0.
\end{aligned}
\end{equation}
Which impose the equation:
\begin{equation}
\frac{c_1^{i_1+1}}{c_1^{i_1}}=-q^{-\lambda+2(N-2-i_1)}\frac{[\lambda+2-N+i_1]_{\xi}}{[\lambda-i_1]_{\xi}}.
\end{equation}
We fix $c_1^0=1$ and the two conditions together determine an unique sequence of coefficients $\{c_1^k\}_{k \in \overline{1,N-1}}$. This completes the first step.

{\bf General case: $P(m)\rightarrow P(m+1)$}
Suppose we have constructed a sequence of functions such that $P(m)$ is satisfied. We search for the function $f_{m+1}$ such that the following relation is satisfied (as in \ref{eq:3}):
\begin{equation}
\begin{aligned}
\left( \sum_{j=1}^{2m+2}Id^{j-1}\otimes E \otimes K^{m-j-1} \right) \curvearrowright \ \ \ \ \ \ \ \ \ \ \ \ \ \ \ \ \ \ \ \ \ \ \ \ \ \ \ \ \ \ \ \ \ \ \ \ \ \ \ \ \ \ \ \ \ \ \ \ \ \ \ \ \\
\left( \sum_{i_1,...,i_{m+1}=0}^{N-1} \left( \prod_{k=1}^{m+1}c_{k}^{i_k} \right) \left( v_{i_{m+1}}\otimes ... \otimes v_{i_1} \otimes v_{N-1-i_{1}} \otimes ... \otimes v_{N-1-i_{m+1}} \right) \right) =0.
\end{aligned}
\end{equation}
We will separate the above sum in the following manner: we will separate the part that corresponds to the action of $E$ into two terms, one which correspond to the first and the $2m+2$ strand, and the other term which contains the rest. For each of the two actions, we will split the sum corresponding to the vector coming from the coevaluation into two sums. Let us make this precise. 

Separating the two actions, the equation that we need becomes the following:
\begin{equation}\label{eq:4''}
\begin{aligned}
\left( \left( E \otimes K^{\otimes 2m+1}+Id^{\otimes 2m+1} \otimes E \right) +  Id \otimes \left( \sum_{j=1}^{m}Id^{j-1}\otimes E \otimes K^{m-j-1} \right)\otimes K \right) \curvearrowright \\
\left( \sum_{i_1,...,i_{m+1}=0}^{N-1} \left( \prod_{k=1}^{m+1}c_{k}^{i_k} \right) \left( v_{i_{m+1}}\otimes ... \otimes v_{i_1} \otimes v_{N-1-i_{1}} \otimes ... \otimes v_{N-1-i_{m+1}} \right) \right) =0.
\end{aligned}
\end{equation}
We consider the following vectors:
\begin{align*}
C_m^{i_1,...,i_m}:=\prod_{k=1}^{m}c_{k}^{i_k} \ \ \ \ \ \ \ \ \ \ \ \ \ \ \ \ \ \ \ \ \ \ \ \\
v_{2m}^{i_1,...,i_m}:=v_{i_{m}}\otimes ... \otimes v_{i_1} \otimes v_{N-1-i_{1}} \otimes ... \otimes v_{N-1-i_{m}}.
\end{align*}
With this notation and the formula for the comultiplication of $E$, the equation \ref{eq:4''} becomes:
 \begin{equation}\label{eq:4}
\begin{aligned}
\left( \left( E \otimes K^{\otimes 2m+1}+Id^{\otimes 2m+1} \otimes E \right) + \left(  Id \otimes \Delta^m(E) \otimes K \right) \right) \curvearrowright  \ \ \ \ \ \ \ \ \ \ \ \ \\
\left( \sum_{i_1,...,i_{m+1}=0}^{N-1} c_{m+1}^{i_{m+1}} C_{m}^{i_1,...,i_{m}} \left( v_{i_{m+1}}\otimes v_{2m}^{i_1,...,i_{m}}\otimes v_{N-1-i_{m+1}} \right) \right) =0.
\end{aligned}
\end{equation}
\begin{remark}\label{R:10}
We notice that the generator  $K$ acts in the following manner:
$$K^{\otimes 2m}\curvearrowright(v_{2m}^{i_1,...,i_m})=q^{2m \lambda -2(i_m+...+i_{1}+(N-1-i_1)+...+(N-1-i_m))} v_{2m}^{i_1,...,i_m}=$$
$$=q^{2m\lambda-2m(N-1)}v_{2m}^{i_1,...,i_m}.$$
\end{remark}
With these notations, $P(m)$ is equivalent to:
$$\Delta^m(E)(Id^{\otimes m}\otimes f_1 \otimes ...\otimes f_m )(v^{m}_{Coev})=0.$$
This can be written as follows:
\begin{equation}\label{eq:5}
\Delta^m(E) \left( \sum_{i_1,...,i_{m}=0}^{N-1}  C_{m}^{i_1,...,i_{m}} \cdot  v_{2m}^{i_1,...,i_{m}}  \right)=0 
\end{equation}
Then, equation \ref{eq:4} becomes:
 \begin{equation}\label{eq:6}
\begin{aligned}
\left( E \otimes K^{\otimes 2m+1}+Id^{\otimes 2m+1} \otimes E \right) \ \ \ \ \ \ \ \ \ \ \ \ \ \ \  \ \ \ \ \ \ \ \ \ \ \ \ \ \ \ \ \ \ \ \ \ \ \ \ \ \ \ \ \ \ \ \ \ \ \ \ \ \ \ \ \ \ \ \ \ \ \ \ \ \ \ \  \\
\left( \sum_{i_1,...,i_{m+1}=0}^{N-1} c_{m+1}^{i_{m+1}} C_{m}^{i_1,...,i_{m}} v_{i_{m+1}}\otimes v_{2m}^{i_1,...,i_{m}}\otimes v_{N-1-i_{m+1}} \right) \ \ \ \ \ \ \ \ \ \ \ \ \ \ \ \ \ \ \ \ \ \ \ \ \ \ \\ 
+  \mathlarger{\sum_{i_{m+1}=0}^{N-1}} q^{\lambda-2(N-1-i_{m+1})} \cdot c_{m+1}^{i_{m+1}} \cdot \ \ \ \ \ \ \ \ \ \ \ \ \ \ \ \ \ \ \ \ \ \ \ \ \ \ \ \ \ \ \ \ \ \ \ \ \ \ \ \ \ \ \ \ \ \ \ \ \ \ \ \ \ \ \ \ \ \ \ \ \ \ \ \ \ \ \ \  \ \\
\cdot \left( v_{i_{m+1}}^{m+1} \otimes   {\color{red}\Delta^m(E) \left( \sum_{i_1,...,i_{m}=0}^{N-1}  C_{m}^{i_1,...,i_{m}} v_{2m}^{i_1,...,i_{m}} \right)} \otimes v_{N-1-i_{m+1}} \right) =0 \ \ \ \ \ \ \ \ \ \ \ \ \ \ \ \ 
\end{aligned}
\end{equation}
Using the induction step $P(m)$ written in formula \ref{eq:5}, we conclude that the second sum from the previous formula vanishes anyway, since it contains the sum written in red. So, we obtain that the requirement that we need is equivalent to the following expression:
\begin{equation}
\begin{aligned}
\left( E \otimes K^{\otimes 2m+1}+Id^{\otimes 2m+1} \otimes E \right) \ \ \ \ \ \ \ \ \ \ \ \ \ \ \ \ \ \ \ \ \ \ \ \ \ \ \ \ \ \ \ \ \ \ \ \ \ \ \ \ \ \\ 
\left( \sum_{i_1,...,i_{m+1}=0}^{N-1} c_{m+1}^{i_{m+1}} \cdot C_{m}^{i_1,...,i_{m}}\cdot  v_{i_{m+1}}\otimes v_{2m}^{i_1,...,i_{m}}\otimes v_{N-1-i_{m+1}}\right)=0.
\end{aligned}
\end{equation}
\begin{notation}\label{N:1}
We denote by:
$$w^m_{Coev}:=(Id^{\otimes m}\otimes f_1 \otimes ...\otimes f_m) \ v^{m}_{Coev}=\sum_{i_1,...,i_{m}=0}^{N-1} C_{m}^{i_1,...,i_{m}}  \cdot v_{2m}^{i_1,...,i_{m}}.$$
\end{notation}
In this way, the condition that we are interested in can be written as:
\begin{equation}
\left( E \otimes K^{\otimes 2m+1}+Id^{\otimes 2m+1} \otimes E \right) \left( \sum_{i_{m+1}=0}^{N-1} c_{m+1}^{i_{m+1}} \cdot  v_{i_{m+1}}\otimes w^m_{Coev}\otimes v_{N-1-i_{m+1}}\right)=0.
\end{equation}
Since we have the operator $K^{\otimes 2m}$ in the middle of the first term that acts above, we compute its action on the corresponding vector.
\begin{remark} \label{R:11}Using the action from remark \ref{R:10}, we get:
$$K^{\otimes 2m}\curvearrowright w^m_{Coev}=q^{2m \lambda-2m(N-1)}w^m_{Coev}.$$
\end{remark}
We arrive at the following condition:
\begin{equation}
\left( E \otimes K^{\otimes 2m+1}+Id^{\otimes 2m+1} \otimes E \right) \left( \sum_{i_{m+1}=0}^{N-1} c_{m+1}^{i_{m+1}} \cdot  v_{i_{m+1}}\otimes w^m_{Coev}\otimes v_{N-1-i_{m+1}}\right)=0.
\end{equation}
Splitting it up into two sums, and using the $K$-action from \ref{R:11} one gets:
\begin{equation}
\begin{aligned}
\sum_{i_{m+1}=0}^{N-1} c_{m+1}^{i_{m+1}} [\lambda+1-i_{m+1}]_{\xi}\cdot  q^{2m\lambda-2m(N-1)+\lambda-2(N-1-i_{m+1})}\cdot \ \ \ \ \ \ \ \ \ \ \ \ \ \ \ \ \ \ \ \ \\
\cdot v_{i_{m+1}-1}\otimes w^m_{Coev}\otimes v_{N-1-i_{m+1}} + \ \ \ \ \ \ \ \ \ \ \ \ \ \ \ \\
+ \sum_{i_{m+1}=0}^{N-2} c_{m+1}^{i_{m+1}} \cdot  [\lambda+1-(N-1-i_{m+1})]_{\xi} \cdot v_{i_{m+1}}\otimes w^m_{Coev}\otimes v_{N-1-i_{m+1}}=0. \ \ \ \ \ \ \ \ \ \ 
\end{aligned}
\end{equation}
The change of variables $k=i_{m+1}-1$ in the first sum and $k=i_{m+1}$ leads to the following equation:
\begin{equation}\label{eq:7}
\begin{aligned}
\mathlarger{\sum_{k=0}^{N-2}} \left( c_{m+1}^{k+1} [\lambda-k]_{\xi}\cdot  q^{(2m+1)\lambda-2(m+1)(N-1)+2(k+1)}+c_{m+1}^{k} \cdot  [\lambda+2-N+k]_{\xi} \right) \cdot \ \ \ \ \ \ \ \ \ \ \ \ \ \ \ \ \ \ \ \ \\
\cdot v_{k}\otimes w^m_{Coev}\otimes v_{N-2-k}=0 \ \ \ \ \ \ \ \ \ \ \ \ \ \ \ \ \ \ \ \ \ \ \ \ \ \ \ \ \ \ \ \ \ \ \ \ \ \ \ \ \ \ \ \ \ \ \ \ \ \ \ \ 
\end{aligned}
\end{equation}
Let us define the family of coefficients $\{ c_{m+1}^k\}_{k\in \overline{1,N-1}}$ prescribed by the following system:
\begin{equation}\label{eq:13}
\begin{aligned}
 \frac{c_{m+1}^{k+1}}{c_{m+1}^k}= - \frac{[\lambda+2-N+k]_{\xi}}{[\lambda-k]_{\xi}} \cdot q^{-(2m+1)\lambda+2(m+1)(N-1)-2(k+1)} \\
    c_{m+1}^0=1. \ \ \ \ \ \ \ \ \ \ \ \ \ \ \ \ \ \ \ \ \ \ \ \ \ \ \ \ \ \ \ \ \ \ \ \ \ \ \ \ \ \ \ \ \ \ \ \ \ \ \ \ \ \ \ \ \ \ \ \ \ \ 
\end{aligned}
\end{equation}
Then, we obtain that the equation \ref{eq:7} is true for this sequence. This concludes the induction step. 
\end{proof}
\subsection{{ \bf Step 2-Normalising the partial coevaluation}}\label{Step2}
\

So far, we have seen that we can modify the last $n$ components of the tensor product by the isomorphism $\varphi_n$, such that we arrive in a particular highest weight space. 
$$(Id^{\otimes n-1}_{U_N(\lambda)}\otimes \psi^N_n) \cdot \mathscr F_{\lambda} \left( \tikz[x=1mm,y=1mm,baseline=0.5ex]{\draw[<-] (0,3) .. controls (0,0) and (3,0) .. (3,3); \draw[<-] (-3,3) .. controls (-3,-3) and (6,-3) .. (6,3); \draw[draw=none, use as bounding box](-0.5,0) rectangle (3,3);}\right)(1)\in
W^{\xi,\lambda}_{2(n-1),(n-1)(N-1)}\subseteq U_N(\lambda)^{\otimes 2(n-1)}.$$
In fact, in the definition \ref{DEFINITION}, we have the partial coevaluation, corresponding to the first strand being open. In this part, we show that if we start with the highest weight vector $v_0$, we will arrive actually in a highest weight space.
We remind the notation from \ref{N:1}:
\begin{equation}\label{eq:8}
w^{n-1}_{Coev}:=(Id^{\otimes n-1}_{U_N(\lambda)}\otimes \psi^N_n) \cdot \mathscr F_{\lambda} \left( \tikz[x=1mm,y=1mm,baseline=0.5ex]{\draw[<-] (0,3) .. controls (0,0) and (3,0) .. (3,3); \draw[<-] (-3,3) .. controls (-3,-3) and (6,-3) .. (6,3); \draw[draw=none, use as bounding box](-0.5,0) rectangle (3,3);}\right)(1).
\end{equation}
For the bottom part of the diagram, we have the following morphism corresponding to the partial coevaluation as in \ref{DEFINITION}. We will modify this using the isomorpfism of vector spaces from the first step. So, we are interested in the morphism:
\begin{equation*}
\begin{aligned}
\big(Id^{\otimes n}_{U_N(\lambda)}\otimes \psi^N_n\big) \circ \mathscr F_{\lambda} \left(\uparrow \tikz[x=1mm,y=1mm,baseline=0.5ex]{\draw[<-] (0,3) .. controls (0,0) and (3,0) .. (3,3); \draw[<-] (-3,3) .. controls (-3,-3) and (6,-3) .. (6,3); \draw[draw=none, use as bounding box](-0.5,0) rectangle (3,3);}\right)= Id_{U^{\lambda}_N} \otimes \left(\tcoev^{n-1}_{\lambda} \circ....\circ \tcoev^{1}_{\lambda}\right) \in \ \ \ \ \ \ \ \ \ \ \ \ \ \ \ \ \\
\in \textit{\large Hom}_{\C}\Big( \ U_N(\lambda)  \ \text{\Large ,} \ U_N(\lambda)^{\otimes 2n-1} \Big). \ \ \ \ \ \ \ \ \ \ \ \ \ \ \ 
\end{aligned}
\end{equation*}
We will evaluate it onto the highest weight vector $v_0\in U_N(\lambda)$. Our aim is to show that it arrives in a particular highest weight space.

From the functoriality of $\mathscr F_{\lambda}$ and equation \ref{eq:8}, we obtain:
\begin{equation}
(Id^{\otimes n}_{U_N(\lambda)}\otimes \psi^N_n) \circ \mathscr F_{\lambda} \left(\uparrow \tikz[x=1mm,y=1mm,baseline=0.5ex]{\draw[<-] (0,3) .. controls (0,0) and (3,0) .. (3,3); \draw[<-] (-3,3) .. controls (-3,-3) and (6,-3) .. (6,3); \draw[draw=none, use as bounding box](-0.5,0) rectangle (3,3);}\right)(v_0)=v_0\otimes w^{n-1}_{Coev}. 
\end{equation}
We compute the $E$ action on this vector, which is done using the comultiplication:
\begin{equation}\label{eq:8'}
\begin{aligned}
E(v_0\otimes w^{n-1}_{Coev})=(E\otimes K+Id\otimes E)(v_0\otimes w^{n-1}_{Coev})=\\
=E(v_0)\otimes K(w^{n-1}_{Coev})+v_0\otimes E(w^{n-1}_{Coev})=0.
\end{aligned}
\end{equation}
The first term is zero, because we started with a highest weight vector. The second term vanishes from the construction ( property \ref{eq:8}). 
Concerning the $K$-action, using the weight of $w^{n-1}_{Coev}$ from \ref{eq:8}, we have:
\begin{equation}\label{eq:9}
\begin{aligned}
K(v_0\otimes w^{n-1}_{Coev})=q^{\lambda}q^{2(n-1)\lambda-2(N-1)}v_0\otimes w^{n-1}_{Coev}=\\
=q^2(n-1)\lambda-2(N-1)v_0\otimes w^{n-1}_{Coev}.
\end{aligned}
\end{equation}
The equations \ref{eq:8'} and \ref{eq:9} tell us that the modified partial coevaluation leads indeed in a highest weight space:
\begin{equation}
v_0\otimes w^{n-1}_{Coev}\in W^{\xi,\lambda}_{2n-1,(n-1)(N-1)}\subseteq U_N(\lambda)^{\otimes 2n-1}.
\end{equation}
We would like to emphasise at this point that here we see the importance of starting with a highest weight vector for computing the invariant. This property had a role in order to obtain equation \ref{eq:8}.

\subsection{\bf Step 3 Highest weight space}
\

So far, we have seen that starting from the partial coevaluation, we can arrange to arrive in a highest weight space, if we pay the price of composing with the morphism 
$$Id^{\otimes n}_{U_N(\lambda)} \otimes \psi^N_n.$$
In the sequel, we will see that actually if we insert this morphism and its inverse to the definition of the ADO polynomial, this will not change the value of this invariant.
\begin{lemma}
The ADO invariant can be obtained as follows:
\begin{equation}\label{DEFINITION'}
\begin{aligned}
\Phi_{N}(L,\lambda)=\xi^{(N-1)\lambda w(\beta_n)}\cdot
 \pi_{v_0} \circ \left(Id_{U^{\lambda}_N}\otimes \left(\ev^{n-1}_{\lambda} \circ...\circ \ev^{1}_{\lambda}\right)\right) \circ  \ \ \ \ \  \ \ \ \ \ \ \ \ \ \\
 {\color{blue} \left( Id^{\otimes n}_{U_N(\lambda)} \otimes \left(\psi^N_n \right)^{-1} \right)}\circ \left( \varphi_{2n-1}^{\xi,\lambda}(\beta_n\otimes \mathbb I_{n-1} \right) \circ {\color{blue} \left( Id^{\otimes n}_{U_N(\lambda)} \otimes \psi^N_n \right)}\circ\\
 \left( Id_{U^{\lambda}_N} \otimes \left(\tcoev^{n-1}_{\lambda} \circ....\circ \tcoev^{1}_{\lambda}\right) \right)(v_0) .
\end{aligned}
\end{equation}
\end{lemma}
\begin{proof}
The formula looks the same as the definition \ref{DEFINITION}, with the exception of the two additional morphisms written in blue. The key point is the fact that all the changes that occur from the function from \ref{Step2}, act nontrivially on the part of the diagram corresponding to the straight strands $\unit_{n-1}$, while being trivial on the part of the diagram which contains the non-trivial braid $\beta_n$. This shows that we have the following property:
\begin{equation}
\begin{aligned}
{\color{blue} \left( Id^{\otimes n}_{U_N(\lambda)} \otimes \left(\psi^N_n \right)^{-1} \right)}\circ
\left( \varphi_{2n-1}^{\lambda}(\beta_n\otimes \mathbb I_{n-1} \right) \circ {\color{blue} \left( Id^{\otimes n}_{U_N(\lambda)} \otimes \psi^N_n \right)}= \ \ \ \ \ \ \ \ \ \ \ \\
={\color{blue} \left( Id^{\otimes n}_{U_N(\lambda)} \otimes \left(\psi^N_n \right)^{-1} \right)}\circ \left( \varphi_{n}^{\lambda}(\beta_n)\otimes Id^{\otimes n}_{U_N(\lambda)} \right) \circ {\color{blue} \left( Id^{\otimes n}_{U_N(\lambda)} \otimes \psi^N_n \right)}= \ \ \ \ \ \ \ \ \ \\
=\left( \varphi_{n}^{\xi,\lambda}(\beta_n)\otimes Id^{\otimes n}_{U_N(\lambda)} \right). \ \ \ \ \ \ \ \ \ \ \ \ \ \ \ \ \ \ \ \ \  \ \ \ \ \ \ \ \ \ \ \ \ \ \ 
\end{aligned}
\end{equation}

Using this property, we conclude the new expression of the invariant. 
\end{proof}
This means that the effect obtained by composing with this normalising function and its inverse, do not affect the final value of the invariant. However, this function has an important role- it helps us to define the whole construction, which a priori passes through the representation $U_N(\lambda)^{\otimes n}\otimes (U_N(\lambda)^{*})^{\otimes n-1}$ through the particular highest weight space $W^{\xi,\lambda}_{2n-1,(n-1)(N-1)}$.
\begin{remark} Using the notation from equation \ref{eq:8} and the quantum representation from section \ref{Quantumrep}, we conclude the following description for the ADO invariant:
\begin{equation}\label{DEFINITION''}
\begin{aligned}
\Phi_{N}(L,\lambda)=\xi^{(N-1)\lambda w(\beta_n)}\cdot
 \pi_{v_0} \circ  \ \ \ \ \ \ \ \ \ \ \ \ \ \ \ \ \ \ \ \ \ \ \ \ \ \ \ \ \ \ \ \ \ \ \ \ \ \ \ \ \ \ \ \ \ \ \ \ \ \ \ \ \ \\
  \circ \left(Id_{U^{\lambda}_N}\otimes \left(\ev^{n-1}_{\lambda} \circ...\circ \ev^{1}_{\lambda}\right)\right) \circ {\color{blue} \left( Id^{\otimes n}_{(U_N(\lambda)} \otimes \left(\psi^N_n \right)^{-1} \right)}\circ \ \ \ \ \ \ \ \ \ \\
\left( ^{W}\varphi_{2n-1,(n-1)(N-1)}^{\xi, \lambda} \left(\beta_n\otimes \mathbb I_{n-1}\right) \right) (v_0 \otimes w^{n-1}_{Coev}).
\end{aligned}
\end{equation}
\end{remark}

\newcommand{\tEv}{\stackrel{\longleftarrow}{\operatorname{Ev}}}
\begin{center}\label{pict}
\begin{tikzpicture}
[x=0.95mm,y=1mm]
\node (Alex)  [color=red]             at (-5,130)    {$Coloured \ Alexander \ invariant$};
\node (c0')  [color=red]             at (-5,-10)    {$v_0$};
\node (c0)               at (-5,0)    {$\mathbf{U}_N(\lambda)^{\otimes 2n-1}$};
\node (c1) [color=red] at (-5,40)   {$\supseteq \ \ \ \ \mathbf{W}^{\xi,\lambda}_{2n-1,(n-1)(N-1)}$};
\node (c2) [color=red]   at (-5,60)   {$\supseteq \ \ \ \ \ \mathbf{W}^{\xi,\lambda}_{2n-1,(n-1)(N-1)}$};
\node (c3) []  at (-5,100)   {$U_N(\lambda)$};
\node (c4) []  at (-5,115)   {$\C$};
\node (c4') [color=red]  at (-5,125)   {$\phi_N(L,\lambda)$};
\node (c5) [color=red]  at (-5,120)   {$\pitchfork$};
\node (c5') [color=red]  at (-5,-5)   {$\in$};
\node (e) [color=blue]  at (20,34)   {$\Theta_{\xi,\lambda}^{N,n}$};
\node (f) [color=green]  at (34,30)   {$\mathbf{\mathscr{F}_{n}^{N,\lambda}}$};
\node (g) [color=cyan]  at (60,70)   {$\mathbf{\mathscr{G}_{n}^{N,\lambda}}$};
\node (bf) [color=green]  at (25,70)   {$(\beta_n\otimes \mathbb I_{n-1})\mathbf{\mathscr{F}_{n}^{N,\lambda}}$};
\node (bf) [color=green]  at (25,70)   {$(\beta_n\otimes \mathbb I_{n-1})\mathbf{\mathscr{F}_{n}^{N,\lambda}}$};
\node (q) [color=teal]  at (45,110)   {$Topological \ intersection \ pairing$};
\node (qq) [color=black]  at (-55,110)   {$Modified$};

\node (b1)               at (-40,20)    {$\mathbf{U}_N(\lambda)^{\otimes n}\otimes {(\mathbf{U}_N(\lambda)^{*})}^{\otimes n-1}$};
\node (b2) [] at (-40,40)   {$\mathbf{U}_N(\lambda)^{\otimes 2n-1}$};
\node (b3) []   at (-40,60)   {$\mathbf{U}_N(\lambda)^{\otimes 2n-1}$};
\node (b4) []  at (-40,80)   {$\mathbf{U}_N(\lambda)^{\otimes n}\otimes {(\mathbf{U}_N(\lambda)^{*})}^{\otimes n-1}$};
\node (d1) [color=blue]  at (34,40)  {$\simeq \ \mathbf{H^{\infty,N}_{2n-1,(n-1)(N-1)}}$};
\node (d2) [color=blue]  at (34,60)  {$\simeq \ \mathbf{H^{\infty,N}_{2n-1,(n-1)(N-1)}}$};
\node (d2') [color=teal]  at (66,60)  {$\otimes\mathbf{H^{\Delta,\partial,N}_{2n-1,(n-1)(N-1)}}$};

\draw[->]             (c0) to node[left,yshift=-2mm,font=\small]{$1)Id_{U^{\lambda}_N} \otimes \left(\tcoev^{n-1}_{\lambda} \circ....\circ \tcoev^{1}_{\lambda}\right)$} (b1);
\draw[->]             (b1) to node[left,yshift=-2mm,font=\small]{$1) Id^{\otimes n}\otimes \psi_n^N$} (b2);
\draw[->]             (b2) to node[left,yshift=-2mm,font=\small]{$1) \beta_n\otimes \mathbb I_{n-1}$} (b3);
\draw[->]             (b3) to node[left,yshift=-2mm,font=\small]{$1) Id^{\otimes n}\otimes (\psi_n^N)^{-1}$} (b4);
\draw[->]             (b4) to node[left,yshift=2mm,font=\small]{$1)\left(Id_{U^{\lambda}_N}\otimes \left(\ev^{n-1}_{\lambda} \circ...\circ \ev^{1}_{\lambda}\right)\right) $} (c3);
\draw[->]    [color=red]         (c0) to node[left,yshift=-2mm,font=\small]{$2)$} (c1);
\draw[->]   [color=red]          (c1) to node[left,yshift=-2mm,font=\small]{$2) \beta_n\otimes \mathbb I_{n-1}$} (c2);
\draw[->]   [color=red]          (c2) to node[left,yshift=-2mm,font=\small]{$2) $} (c3);
\draw[->]  [color=red]           (c3) to node[left,yshift=-2mm,font=\small]{${\color{black} 1)}{\color{red} 2) \pi_{v_0}}$} (c4);
\draw[->,dashed]  [in=210,out=60]           (b4) to node[left,yshift=5mm,xshift=-10mm,font=\small]{$Reshetikhin-Turaev$} (c4);
\draw[->,color=teal] [in=-20,out=120]  (45,70)  to node[right,font=\small]{$ \ll , \gg_{\psi_{\xi,\lambda}} $}      (c4);
\draw[->]   [color=blue]          (d1) to node[left,yshift=-2mm,font=\small]{$ \beta_n\otimes \mathbb I_{n-1}$} (d2);
\draw[->]   [color=green,dashed]          (c0) to node[left,yshift=-2mm,font=\small]{} (f);
\draw[->]   [color=red,dashed]  [in=-60,out=60]        (c2) to node[left,yshift=-2mm,font=\small]{} (c4);
\draw[->]   [color=cyan,dashed]  [in=-30,out=80]        (d2) to node[left,yshift=-2mm,font=
\small]{} (c4);

\end{tikzpicture}
\end{center}

\subsection{{ \bf Step 4-Construction of the homology class $\mathscr F_n^{N,\lambda}$}}\label{Step4}

\

As we have seen in definition \ref{DEFINITION''}, the advantage of the technical part from the previous steps, helps us to see the coloured Alexander invariant through the highest weight space $W^{\xi,\lambda}_{2n-1,(n-1)(N-1)}$. Now we will pass towards topology. 

A special property of highest weight spaces is the powerfull tool that they have topological counterparts. We refer to the discussion from section \ref{4}. Using the identification discussed in section \ref{4} and the notation \ref{N:2}, we obtain:\\

$ \ \ \ \ \ \ \ \ \ \ \ \ \ \ \ \ \ {^{W}\varphi}^{\xi,\lambda}_{2n-1,(n-1)(N-1)} \ \ \ \ \ \ \ \ \ \ \ \ \ \ \ \ \ l^{\infty,N}_{2n-1,(n-1)(N-1)}|_{\psi_{\xi,\lambda}}$
$$B_{2n-1} \curvearrowright W^{\xi,\lambda}_{2n-1,(n-1)(N-1)}\simeq_{\Theta_{\xi,\lambda}^{N,n}} \HH^{\infty,N}_{2n-1,(n-1)(N-1)} |_{\psi_{\xi,\lambda}} \curvearrowleft B_{2n-1}.$$
\begin{definition}(Homology class corresponding to the cups $\mathscr F_n^{N,\lambda}$)\\
Let us define the class:
\begin{equation}\label{eq:12}
\mathscr F_n^{N,\lambda}:={\left( \Theta^{N,n}_{\xi,\lambda}\right) } ^{-1}\big(v_0 \otimes w^{n-1}_{Coev}\big).
\end{equation}
\end{definition}
  
\subsection{{ \bf Step 5-Construction of the dual homology class $\mathscr G_n^{N,\lambda}$}}\label{Step5}
\

In this step, we aim construct the dual manifold $\mathscr G_n^N$, which corresponds to the caps from the algebraic side of the of the invariant. 
Consider the function:
$$\pi_{v_0} \circ \left(Id_{U^{\lambda}_N}\otimes \left(\ev^{n-1}_{\lambda} \circ...\circ \ev^{1}_{\lambda}\right)\right) \circ  {\color{blue} \left( Id^{\otimes n}_{(U_N(\lambda)} \otimes \left(\psi^N_n \right)^{-1} \right)}: U_N(\lambda)^{\otimes 2n-1} \rightarrow \C.$$
If we restrict this map to the highest weight space, we obtain the following function:
\begin{equation}\label{eq:10} 
Ev^{\lambda}_{N,n}: W^{\xi,\lambda}_{2n-1,(n-1)(N-1)}\rightarrow \C.
\end{equation}
In other words, we have that $Ev^{\lambda}_{N,n} \in Hom_{\C}(W^{\xi,\lambda}_{2n-1,(n-1)(N-1)},\C)$.
Composing with the isomorphism which related the highest weight space with special Lawrence representation, we obtain:
$$Ev^{\lambda}_{N,n} \circ { \Theta^N_{\xi,\lambda} } \in Hom_{\C}\left( \HH^{\infty,N}_{2n-1,(n-1)(N-1)} |_{\psi_{\xi,\lambda}},\C \right).$$
At this moment, it occurs the point where we use the topological pairing. On the other hand, from corollary \ref{C:pairing}, we have the non-degenerate topological form:
$$\ll , \gg |_{\psi_{\xi,\lambda}}: \HH^{\infty, N}_{2n-1,(n-1)(N-1)}|_{\psi_{\xi,\lambda}} \otimes  \HH^{\Delta,\partial,N}_{2n-1,(n-1)(N-1)} |_{\psi_{\xi,\lambda}}\rightarrow \C.$$
Since, this is a non-degenerate form over a field, each element in the dual of the first space, can be identified with the pairing with respect to a fix element in the second homology space. Using this, we are able to introduce the second homology class.
\begin{definition}(Homology class corresponding to the caps $\mathscr G^{N,\lambda}_{n}$)\label{G}

Let us consider the unique homology class $\mathscr G^{N,\lambda}_{n} \in \HH^{\Delta,\partial,N}_{2n-1,(n-1)(N-1)} |_{\psi_{\xi,\lambda}}$ with the property that:
$$Ev^{\lambda}_{N,n} \circ { \Theta^{N,n}_{\xi,\lambda}}  (\cdot )=\ll \cdot, \mathscr G^{N,\lambda}_{n} \gg|_{\psi_{\xi,\lambda}}$$
as elements in the dual space $ \left( \HH^{\infty,N}_{2n-1,(n-1)(N-1)} |_{\psi_{\xi,\lambda}} \right) ^*.$
\end{definition}

\subsection{{ \bf Step 6-Proof of the topological model}}\label{Step6}

So far, we defined the two homology classes that we work with, which are prescribed by cups and caps. Now, we will put everything together and show that their pairing leads to the coloured Alexander polynomials.
\begin{equation}
\begin{aligned}
\Phi_{N}(L,\lambda)=^{\ref{DEFINITION''}} \ \ \xi^{(N-1)\lambda w(\beta_n)}\cdot
 \pi_{v_0} \circ  \ \ \ \ \ \ \ \ \ \ \ \ \ \ \ \ \ \ \ \ \ \ \ \ \ \ \ \ \ \ \ \ \ \ \ \ \ \ \ \ \ \ \ \ \ \ \ \ \ \ \ \ \ \\
  \circ \left(Id_{U^{\lambda}_N}\otimes \left(\ev^{n-1}_{\lambda} \circ...\circ \ev^{1}_{\lambda}\right)\right) \circ {\color{blue} \left( Id^{\otimes n}_{(U_N(\lambda)} \otimes \left(\psi^N_n \right)^{-1} \right)}\circ \ \ \ \ \ \ \ \ \ \\
\left( ^{W}\varphi_{2n-1,(n-1)(N-1)}^{\xi, \lambda} \left(\beta_n\otimes \mathbb I_{n-1}\right) \right) { \color{ green} \left( {\left( \Theta^{N,n}_{\xi,\lambda}\right) }  {\left( \Theta^{N,n}_{\xi,\lambda}\right) } ^{-1} \right) }(v_0 \otimes w^{n-1}_{Coev})=\\
=^{\ref{Step4}} \ \ \ \xi^{(N-1)\lambda w(\beta_n)}\cdot
 \pi_{v_0} \circ  \ \ \ \ \ \ \ \ \ \ \ \ \ \ \ \ \ \ \ \ \ \ \ \ \ \ \ \ \ \ \ \ \ \ \ \ \ \ \ \ \ \ \ \ \ \ \ \ \ \ \ \ \ \\
  \circ \left(Id_{U^{\lambda}_N}\otimes \left(\ev^{n-1}_{\lambda} \circ...\circ \ev^{1}_{\lambda}\right)\right) \circ {\color{blue} \left( Id^{\otimes n}_{U_N(\lambda)} \otimes \left(\psi^N_n \right)^{-1} \right)}\circ \ \ \ \ \ \ \ \ \ \\
{ \color{ orange}  {\left( \Theta^{N,n}_{\xi,\lambda}\right) }   {\left( \Theta^{N,n}_{\xi,\lambda}\right)^{-1} } }\left( ^{W}\varphi_{2n-1,(n-1)(N-1)}^{\xi, \lambda} \left(\beta_n\otimes \mathbb I_{n-1}\right) \right) { \color{ green} { \Theta^{N,n}_{\xi,\lambda} }  }(\mathscr F_n^{N,\lambda})=\\
=^{\ref{C:Ident}} \ \ \ \xi^{(N-1)\lambda w(\beta_n)}\cdot
 \pi_{v_0} \circ  \ \ \ \ \ \ \ \ \ \ \ \ \ \ \ \ \ \ \ \ \ \ \ \ \ \ \ \ \ \ \ \ \ \ \ \ \ \ \ \ \ \ \ \ \ \ \ \ \ \ \ \ \ \\
  \circ \left(Id_{U^{\lambda}_N}\otimes \left(\ev^{n-1}_{\lambda} \circ...\circ \ev^{1}_{\lambda}\right)\right) \circ {\color{blue} \left( Id^{\otimes n}_{U_N(\lambda)} \otimes \left(\psi^N_n \right)^{-1} \right)}\circ { \color{ orange}  {\left( \Theta^{N,n}_{\xi,\lambda}\right)  }} \ \ \ \ \ \ \ \ \ \\
 { \color{ green}  l^{\infty,N}_{n,m}|_{\psi_{\xi,\lambda}} (\beta_n \otimes \mathbb I_{n-1}) }(\mathscr F_n^{N,\lambda})= \ \ \ \ \ \ \ \ \ \ \ \ \ \ \ \ \ \ \ \ \ \ \ \ \ \ \ \ \ \ \ \ \ \ \ \ \ \ \ \ \ \ \ \\
=^{\ref{eq:10}} \ \ \ \xi^{(N-1)\lambda w(\beta_n)}\cdot \Big( Ev^{\lambda}_{N,n} \circ { \color{ orange}  {\left( \Theta^{N,n}_{\xi,\lambda}\right) } } \Big) \Big(
 { \color{ green}  l^{\infty,N}_{n,m}|_{\psi_{\xi,\lambda}} (\beta_n \otimes \mathbb I_{n-1}) }(\mathscr F_n^{N,\lambda}) \Big) =\\
=^{\ref{G}} \ \ \ \xi^{(N-1)\lambda w(\beta_n)}\cdot \ll 
 { \color{ green}  l^{\infty,N}_{n,m}|_{\psi_{\xi,\lambda}} (\beta_n \otimes \mathbb I_{n-1}) }\mathscr F_n^{N,\lambda}, \mathscr G_n^{N,\lambda} \gg|_{\psi_{\xi,\lambda}}.\\
\end{aligned}
\end{equation}
\end{proof}
\section{Globalisation of the homology classes}\label{10}

So far, for each $\lambda \in \C \setminus \Z$, we have constructed two homology classes 
$$\mathscr F^{N,\lambda}_n \in \HH^{\infty,N}_{2n-1, (n-1)(N-1)}|_{\psi_{\xi,\lambda}}  \ \ \ \ \ \  \mathscr G^{N,\lambda}_n \in \HH^{\Delta,\partial,N}_{2n-1, (n-1)(N-1)}|_{\psi_{\xi,\lambda}}$$

that lead to the coloured Alexander invariant $\varphi_N(L,\lambda)$ evaluated at $\lambda$, through the topological intersection form. The aim for this section is to show that these classes can be globalised. We show that they come from two classes that live in the corresponding homologies, but which are not specialised by a specialisation that depends on the parameter $\lambda$. In other words, the two new homology classes are in some sense intrinsic with respect to this parameter. 
The main idea is the fact that the function $\phi_{q,\lambda}$ can be defined over the ring $\Z[q^{\pm1},s^{\pm 1}]$. Then, we try to lift all the spaces over this ring, in order to globalise the parameter $\lambda \in \C$ into the abstract variable $s$, through the specialisation $\eta_{\xi,\lambda}$.

\subsection{Correspondence highest weight spaces - Lawrence representation}
\begin{definition}(Specialisation of coefficients)\label{D:1} 

{\bf I. Quantum side} Consider $\eta_{\xi,\lambda}: \Z[q^{\pm},s^{\pm}]\rightarrow \C$ defined as 
\begin{equation}
\begin{aligned}
\eta_{\xi,\lambda}(q)=\xi; \ \ \ \ \ 
\eta_{\xi,\lambda}(s)=\xi^{\lambda}.
\end{aligned}
\end{equation}
{\bf II. Homological side} Let $\gamma: \Z[x^{\pm},d^{\pm}]\rightarrow \Z[q^{\pm 1};s^{\pm}]$ defined as:
\begin{equation}
\begin{aligned}
\gamma(x)= s^{2}; \ \ \ \ \ 
\gamma(d)=-q^{-2}.
\end{aligned}
\end{equation}

\end{definition}
\begin{definition}(New "Highest weight space" )

We use the fact that all the actions of the quantum group onto $U_N(\lambda)$ actually can be written using the ring $\Z[\xi^{\pm1},\xi^{\pm \lambda}]$.

Consider the vector space $V^N_{n,m},W^N_{n,m}$ having the same generators as $V^{\xi,\lambda}_{n,m},W^{\xi,\lambda}_{n,m}$, over the ring $\Z[q^{\pm1},s^{\pm 1}]$. 
\end{definition}
We notice that the actions of the quantum group that send weight spaces onto highest weight spaces from \ref{P:bqu1}, can be written in these new versions over the ring $\Z[q^{\pm1},s^{\pm 1}]$. In the following, we will not discuss about the braid group action, since the homological classes are predicted by the evaluations and coevaluations, we do not need to consider the braid part.

\begin{definition} (New version of basis){\label{D:newbasis}}

Consider the function $\phi^{N}_{n,m}: E^N_{n,m}\rightarrow W^{N}_{n,m}$ by the formula:

\begin{equation}
\begin{aligned}
\phi^{N}_{n,m}(e_1,...,e_{n-1}):=\sum_{k=0}^{N-1} (-1)^k {s}^{-k(n-1)} {q}^{2mk-k(k+1)}  \ \ \ \ \ \ \ \ \ \ \ \ \ \ \ \ \ \ \ \ \\ 
v_k \otimes E^k\big(s^{\sum_{i=1}^{n-1} (i+1) e_i}v_{e_1}\otimes...\otimes v_{e_{n-1}}\big).
\end{aligned}
\end{equation}
\end{definition}
Now, we define a truncated model of the function $\Theta$, at the generic level, such that it globalise the function $\Theta_{\xi,\lambda}$, over the ring of Laurent polynomials in two variables.
\begin{definition} (Truncated function at generic parameters)

Consider the map defined as follows on generators:
$$\Theta^{N,n}:\HH^{\infty,N}_{2n-1, (n-1)(N-1)}|_{\gamma}\rightarrow W^N_{2n-1, (n-1)(N-1)}$$ 
$$\Theta^{N,n}([\tilde{\F_e}]):=\phi^{N}_{2n-1, (n-1)(N-1)}(e).$$
\end{definition}
\begin{remark}
a) Following the specialisation of coefficients:
\begin{equation}\label{eq:10}
\Theta^{N,n}|_{\tilde{\eta}_{\xi,\lambda}}=\Theta^{N,n}_{\xi,\lambda}.
\end{equation}
b) On the other hand, know that the highest weight spaces 
$$(W^N_{2n-1, (n-1)(N-1)}, \Z[q^{\pm1},s^{\pm 1}]) \ \ \ \ \ \ \ \ \ (W^{\xi,\lambda}_{2n-1, (n-1)(N-1)}, \C)$$ have the same dimension, and $\Theta^N_{\xi,\lambda}$ evaluated on the indexing set $E_{2n-1, (n-1)(N-1)}$ gives a basis of $W^{\xi,\lambda}_{2n-1, (n-1)(N-1)}$. Combined with relation \ref{eq:10},
we obtain that 
\begin{equation}\label{eq:11}
\Theta^{N,n}\Big(\mathscr B_{H^{\infty,N}_{2n-1, (n-1)(N-1)}}\Big)
\end{equation}
 gives a linearly independent set in the highest weight space $W^N_{2n-1, (n-1)(N-1)}$. 
\end{remark}

The main part of this section will be devoted to the understanding of the function $\Theta^{N,n}$ in relation to the ring of coefficients. More precisely, we will show that we can enlarge the ring of coefficients $\Z[q^{\pm1},s^{\pm}]$ such that the function $\Theta^{N,n}$ becomes invertible. 

We aim to look at this function with respect to the multifork basis and the set give in equation \ref{eq:11}. Now, we enlarge the set of coefficients so that the set 
$$\Theta^{N,n}\Big(\mathscr B_{H^{\infty,N}_{2n-1, (n-1)(N-1)}}\Big)
$$
becomes a basis for $W^N_{2n-1, (n-1)(N-1)}$.
Looking at this map in the multifork basis and the basis for the weight space defined in \ref{R:basisweight}, we notice that $$\Theta^{N,n} \in \mathscr M_{d^N_{n,m}}\big(\Z[q^{\pm 1}, s^{\pm 1}] \big).$$ 

Using the actions of the quantum group onto the module $U_N(\lambda)$, given by formulas from Proposition \ref{P:actions}, we notice that all coefficients that occur in the formulas for $\Theta^{N,n}$, which come from the action of $E$, belong to the set:
$$\mathscr L:=<< det(\Theta^{N,n}),\frac{sq^{-k}-s^{-1}q^{k}}{q-q^{-1}},s^k,q^k| k\in \N \}>>_{\Z[q^{\pm 1},s^{\pm 1}]}\subseteq \Z[q^{\pm 1},s^{\pm 1}].$$
(here,  $<< T >>_{\Z[q^{\pm 1},s^{\pm 1}]}$ means the multiplicative system generated by the set $T$).
\begin{definition}(Coefficients)
Consider the ring of coefficients where one inverts the mutiplicative system $\mathscr L$:  
$$\mathscr I:= \Z[q^{\pm 1}, s^{\pm 1}](\mathscr L)\subseteq \Q(q,s).$$
\end{definition}
\begin{notation}
Denote the inclusion of rings by: $$\iota: \Z[q^{\pm 1}, s^{\pm 1}]\hookrightarrow \mathscr I.$$
Also, we extend the specialisations from \ref{D:1} to this bigger ring:\\
{\bf I. Quantum side} $\tilde{\eta}_{\xi,\lambda}: \mathscr I \rightarrow \C$ defined as 
\begin{equation}
\begin{aligned}
\tilde{\eta}_{\xi,\lambda}(q)=\xi; \ \ \ \ \ 
\tilde{\eta}_{\xi,\lambda}(s)=\xi^{\lambda}.
\end{aligned}
\end{equation}
We notice that $\eta_{\xi,\lambda}=\iota \circ\tilde{\eta}_{\xi,\lambda}.$\\
{\bf II. Homological side} $\tilde\gamma: \Z[x^{\pm},d^{\pm}]\rightarrow \mathscr I$ defined as:
\begin{equation}
\tilde{\gamma}:=\iota \circ \gamma; 
\end{equation}

\end{notation}
\begin{remark}
An important observation is the fact that the specialisation $$\tilde{\eta}_{\xi,\lambda}: \mathscr I \rightarrow \C$$ is well defined for any $\lambda\in \C \setminus \N$. This happens due to the fact that we started with the specialisation $$\eta_{\xi,\lambda}: \Z[q^{\pm},s^{\pm}]\rightarrow \C$$ and we considered the multiplicative system $\mathscr L \subseteq \Z[q^{\pm},s^{\pm}]$ such that $$\mathscr L \cap Ker (\eta_{\xi,\lambda})=\emptyset.$$
This ensures that one can extend the map $\eta_{\xi,\lambda}$ to $\tilde{\eta}_{\xi,\lambda}$, which is defined on the ring of coefficients where we invert the multiplicative system $\mathscr L$.
\end{remark}
\definecolor{bittersweet}{rgb}{1.0, 0.44, 0.37}
\definecolor{brickred}{rgb}{0.8, 0.25, 0.33}
\begin{center}
\begin{tikzpicture}
[x=1.1mm,y=1.1mm]

\node (b1)  [color=blue] at (-30,30)    {$\boldsymbol{\mathbb Z[x^{\pm1},d^{\pm1}]}$};
\node (b2) [color=orange] at (0,30)   {$\boldsymbol{\mathbb Z[s^{\pm1},q^{\pm1}]}$};
\node (b3) [color=red]   at (30,30)   {$\boldsymbol{\mathscr I=\mathbb Z[s^{\pm},q^{\pm1}](\mathscr L)}$};
\node (b1')  [color=blue] at (-30,50)    {$\boldsymbol{H^{\infty,N}_{n,m}}$};
\node (b1'')  [color=blue] at (0,50)    {$\boldsymbol{H^{\infty,N}_{n,m}|_{\gamma}}$};
\node (b1''')  [color=blue] at (30,50)    {$\boldsymbol{H^{\infty,N}_{n,m}|_{\tilde{\gamma}}}$};

\node (b1'''')  [color=blue] at (-30,-5)    {$\boldsymbol{H^{\infty,N}_{n,m}|_{\psi_{\xi,\lambda}}}$};
\node (b3'') [color=orange]   at (0,-5)   {$\boldsymbol{W^{N}_{n,m}|_{\eta_{\xi,\lambda}}}$};

\node (b2') [color=orange] at (0,40)   {$\boldsymbol{W^{N}_{n,m}}$};
\node (b3') [color=red]   at (30,40)   {$\boldsymbol{W^{N}_{n,m}|_{\iota}}$};
\node (b4') [color=brickred]  at (48,30)   {$\hookrightarrow$};
\node (b4) [color=brickred]  at (55,30)   {$\Q(q,s)$};
\node (b5) [color=green]  at (0,2)   {$\boldsymbol{\mathbb C}$};
\node (b6) [color=blue]  at (-22,27)   {$\boldsymbol {x}$};
\node (b6') [color=blue]  at (-22,24)   {$\boldsymbol {d}$};
\node (b7) [color=orange]  at (-7,27)   {$\boldsymbol {s^2}$};
\node (b7') [color=orange]  at (-7,24)   {$\boldsymbol {-q^{-2}}$};
\node (b8) [color=orange]  at (-6,20)   {$\boldsymbol {q}$};
\node (b8') [color=orange]  at (-2,20)   {$\boldsymbol {s}$};
\node (b9) [color=green]  at (-6,10)   {$\boldsymbol {\xi}$};
\node (b9') [color=green]  at (-2,10)   {$\boldsymbol {\xi^{\lambda}}$};

\draw[<->,color=green, dashed]   (b1'''')      to node[right,font=\small, yshift=3mm]{}                           (b3'');
\draw[<->,color=red, dashed]   (b1''')      to node[right,font=\small, yshift=3mm]{}                           (b3');
\draw[<->,color=orange, dashed]   (b1'')      to node[right,font=\small, yshift=3mm]{}                           (b2');

\draw[->,color=blue, dashed]   (b6)      to node[right,font=\small, yshift=3mm]{}                           (b7);
\draw[->,color=blue,dashed]   (b6')      to node[right,font=\small, yshift=3mm]{}                           (b7');
\draw[->,color=blue]   (b1)      to node[right,font=\small, yshift=3mm]{$\boldsymbol{\gamma}$}                           (b2);

\draw[->,color=red]             (b2)      to node[left,font=\small, yshift=3 mm]{$\boldsymbol{\iota}$}   (b3);
\draw[->, color=cyan, thick]             (b1)     to[in=160,out=20] node[right,yshift=-3mm,xshift=-5mm,font=\small]{$\boldsymbol {\tilde{\gamma}}$}   (b3);
\draw[->,color=red, thick]             (b3)      to node[right,font=\small, xshift=1mm]{$\boldsymbol{\tilde{\eta}_{\xi,\lambda}}$}   (b5);
\draw[->,color=orange,thick]   (b2)      to node[right,font=\small]{$\boldsymbol{\eta_{\xi,\lambda}}$}                        (b5);
\draw[->,color=green]   (b1)      to node[left,font=\small]{$\boldsymbol{\psi_{\xi,\lambda}}$}                        (b5);
\draw[->,color=orange, dashed]   (b8)      to node[right,font=\small, yshift=3mm]{}                           (b9);
\draw[->,color=orange,dashed]   (b8')      to node[right,font=\small, yshift=3mm]{}                           (b9');


\end{tikzpicture}
\end{center}

\begin{remark}
Putting the previous results together, we conclude the following: 
\begin{enumerate}
\item The set $\Theta^{N,n}\Big(\mathscr B_{H^{\infty,N}_{2n-1, (n-1)(N-1)}}\Big)$ is a linearly independent system of the generic highest weight space $W_{2n-1, (n-1)(N-1)}^{N}$ over the ring $\Z[q^{\pm}, s^{\pm}]$.
\item The determinant of the function $\Theta^{N,n}$ becomes invertible over $\mathscr I$.
\end{enumerate}
\end{remark}
They lead to the following:
\begin{proposition}(Globalisation of isomorphism relating the quantum and homological representations)\label{P:2}\\ 
a) The set $\Theta^{N,n}\Big(\mathscr B_{\HH^{\infty,N}_{2n-1, (n-1)(N-1)}}\Big)$ gives a basis of $W_{2n-1, (n-1)(N-1)}^{N}$ over the extended ring $ \mathscr I$.\\
 b) The function $${\Theta^{N,n}: \HH^{\infty,N}_{2n-1, (n-1)(N-1)}|}_{\tilde{\gamma}}\rightarrow {W^N_{2n-1, (n-1)(N-1)}|}_{\iota} $$
 becomes an invertible morphism of modules over $\mathscr I$.\\
c) The globalisation property: $$\Theta^{N,n}|{\tilde{\eta}_{\xi,\lambda}}=\Theta^{N,n}_{\xi,\lambda}.$$
\end{proposition}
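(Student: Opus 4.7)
The plan is to exhibit $\Theta^{N,n}$ as a square matrix over $\Z[q^{\pm 1}, s^{\pm 1}]$ whose determinant becomes a unit after localising at $\mathscr L$, and to read off (a), (b), (c) as three consequences of the resulting invertibility. The core of the argument is purely formal linear algebra over commutative rings.

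First I would unpack Definition \ref{D:newbasis} and verify that each matrix entry of $\Theta^{N,n}$, taken with respect to the multifork basis on the source and the tensor basis $\{v_{f_1}\otimes\cdots\otimes v_{f_{2n-2}} \mid f \in E^N_{2n-1,(n-1)(N-1)}\}$ of the highest weight space $W^N_{2n-1,(n-1)(N-1)}$, lies in $\Z[q^{\pm 1}, s^{\pm 1}]$. Tracking the action of $E^k$ through Proposition \ref{P:actions}, the nontrivial denominators that appear are precisely the quantum integers $(sq^{-j}-s^{-1}q^j)/(q-q^{-1})$, which by construction lie in the multiplicative system $\mathscr L$. Both source and target are free modules of the same rank $d^N_{2n-1,(n-1)(N-1)}$, so the matrix is square, and its determinant has been explicitly placed into the generating set of $\mathscr L$.

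After tensoring with the localisation $\mathscr I = \Z[q^{\pm 1}, s^{\pm 1}](\mathscr L)$ this determinant becomes a unit, so the adjugate formula (Cramer's rule) provides a two-sided inverse of the matrix over $\mathscr I$. This proves (b); claim (a) is then immediate, since an isomorphism of free $\mathscr I$-modules sends the multifork basis $\mathscr B_{\HH^{\infty,N}_{2n-1,(n-1)(N-1)}}$ to a basis of $W^N_{2n-1,(n-1)(N-1)}|_{\iota}$. For (c), the equality $\Theta^{N,n}|_{\tilde\eta_{\xi,\lambda}} = \Theta^{N,n}_{\xi,\lambda}$ decomposes into two compatibility checks already essentially present in the text: on the ring level one has $\tilde\eta_{\xi,\lambda}\circ\tilde\gamma = \psi_{\xi,\lambda}$ by direct evaluation on the generators $x, d$; on the formula level, applying $\tilde\eta_{\xi,\lambda}$ to the defining formula of $\phi^N_{n,m}$ in Definition \ref{D:newbasis} reproduces the formula of Proposition \ref{P:bqu1} for $\phi_{\xi,\lambda}$ on the normalised weight vectors $\bar v^{\xi,\lambda}_{\iota(e)}$.

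The only real obstacle is bookkeeping: one must confirm that the multiplicative system $\mathscr L$ genuinely absorbs every denominator that appears in the expressions for $\Theta^{N,n}$ and in its inverse. Since $\mathscr L$ has been defined to contain both $\det(\Theta^{N,n})$ and the quantum-integer denominators arising from the $U_q(sl(2))$-action on the Verma module, this verification is direct; once it is in place, the remainder of the argument uses only standard facts about determinants and invertibility of matrices over commutative rings.
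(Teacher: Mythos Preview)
Your proposal is correct and follows essentially the same route as the paper. The paper's argument is contained in the remark immediately preceding the proposition: linear independence of $\Theta^{N,n}(\mathscr B)$ over $\Z[q^{\pm1},s^{\pm1}]$ (inherited from the specialised isomorphism $\Theta^{N,n}_{\xi,\lambda}$), together with the fact that $\det(\Theta^{N,n})$ is placed by fiat into the multiplicative system $\mathscr L$, so becomes a unit in $\mathscr I$; your Cramer's-rule step and the compatibility check for (c) simply spell out what the paper leaves implicit. One small terminological slip: the quantum integers $(sq^{-j}-s^{-1}q^j)/(q-q^{-1})$ appear as \emph{coefficients} in the entries of $\Theta^{N,n}$ (via the $E$-action), not as denominators there; they are inverted in $\mathscr L$ because they will show up as denominators elsewhere (in $\psi^N_n$ and in the inverse of $\Theta^{N,n}$), but this does not affect your argument.
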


\subsection{Globalisation of the first homology class- $\mathscr F_{n}^{N}$}
\

We remind that the first homology class for the topological model belongs to the specialisation of the Lawrence representation:
$$\mathscr F^{N,\lambda}_n \in \HH^{\infty,N}_{2n-1, (n-1)(N-1)}|_{\psi_{\xi,\lambda}}.$$
This is given by the formula from \ref{eq:12}
$$\mathscr F_n^{N,\lambda}:={\left( \Theta^{N,n}_{\xi,\lambda}\right) } ^{-1}(v_0 \otimes w^{n-1}_{Coev}).$$
where the vector $w^{n-1}_{Coev}$ is computed in equation \ref{eq:8}:
\begin{equation}\label{eq:14}
w^{n-1}_{Coev}:=(Id^{\otimes n-1}_{U_N(\lambda)}\otimes \psi^N_n) \cdot \mathscr F_{\lambda} \left( \tikz[x=1mm,y=1mm,baseline=0.5ex]{\draw[<-] (0,3) .. controls (0,0) and (3,0) .. (3,3); \draw[<-] (-3,3) .. controls (-3,-3) and (6,-3) .. (6,3); \draw[draw=none, use as bounding box](-0.5,0) rectangle (3,3);}\right)(1) \in W^{\xi,\lambda}_{2(n-1),(n-1)(N-1)}.
\end{equation}
\begin{remark}(Globalisation of the partial coevaluation over 2 variables)

Using the formulas for the coevaluations, we notice that they can be written using the variables $q$ and $s$, where the parameter $s$ should be thought as $q^{\lambda}$. One can globalise the Reshetikhin-Turaev morphism over the ring in two variables $\mathscr I$. Also, we remember the formulas for the additional function
$$\psi^N_n=f_1\otimes...\otimes f_{n-1}.$$
Here, all the coefficients for the functions $f_1,...,f_n$ are given in \ref{eq:13}, and we notice that they can be written in the variables $q$ and $s$.
\end{remark}
We obtain the following lift of the coevaluation.
\begin{lemma}\label{L:1} 
There exists an element $$w^{n-1,N}_{Coev} \in {W^{N}_{2(n-1),(n-1)(N-1)}|}_{\iota}$$ such that
$${w^{n-1,N}_{Coev}|}_{\tilde{\eta}_{\xi,\lambda}}=w^{n-1}_{Coev}.$$
\end{lemma}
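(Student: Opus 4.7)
The plan is to lift the explicit formula that defines $w^{n-1}_{Coev}$ in equations (\ref{eq:0}) and (\ref{eq:14}) coordinate by coordinate to the two-variable setting, by replacing $\xi$ and $\xi^\lambda$ throughout by the formal variables $q$ and $s$, and then to check that all resulting coefficients land in $\mathscr I = \mathbb Z[q^{\pm},s^{\pm}](\mathscr L)$. The partial coevaluation $v^{n-1}_{Coev}$ itself has integer coefficients and admits a tautological lift, so the real content is to lift the normalising isomorphisms $f_k$ that compose to form $\psi^N_n$.

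Each $f_k$ is prescribed by the scalars $c_k^i$ satisfying the recursion (\ref{eq:13}), which expresses $c_k^{i+1}/c_k^i$ as a ratio of $q$-numbers $[\lambda + a]_\xi$ multiplied by a monomial in $\xi$ and $\xi^\lambda$. I would define lifted scalars $\tilde c_k^i \in \mathscr I$ by the same recursion with each occurrence of $[\lambda + a]_\xi$ replaced by its formal analogue $(sq^{a} - s^{-1}q^{-a})/(q - q^{-1})$ and $\tilde c_k^0 := 1$. The denominators appearing in (\ref{eq:13}) are of the form $[\lambda - j]_\xi$, and their formal counterparts $(sq^{-j} - s^{-1}q^{j})/(q - q^{-1})$ are precisely generators of $\mathscr L$; thus they become invertible in $\mathscr I$, so $\tilde c_k^i \in \mathscr I$. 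Setting $\tilde f_k(v_i^*) := \tilde c_k^i \, v_{N-1-i}$ and $\tilde\psi^N_n := \tilde f_1 \otimes \cdots \otimes \tilde f_{n-1}$, I would define
\[
w^{n-1,N}_{Coev} := \big(\mathrm{Id}^{\otimes n-1} \otimes \tilde\psi^N_n\big) \Big( \sum_{i_1,\ldots,i_{n-1}=0}^{N-1} v_{i_1}\otimes \cdots \otimes v_{i_{n-1}}\otimes v_{i_{n-1}}^* \otimes \cdots \otimes v_{i_1}^* \Big).
\]

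Two verifications then remain. First, I need to check that $w^{n-1,N}_{Coev}$ lies inside the highest weight submodule $W^N_{2(n-1),(n-1)(N-1)}|_\iota$ rather than only in the ambient weight space. This is a formal rewrite of the inductive argument in Step 1 of the main theorem: the cancellations ensuring that $E$ annihilates the vector depend only on the commutation relations for $E$ and $K$ and on the lifted recursion itself, so they go through as polynomial identities over $\mathscr I$. Second, the compatibility $w^{n-1,N}_{Coev}|_{\tilde\eta_{\xi,\lambda}} = w^{n-1}_{Coev}$ is immediate from $\tilde\eta_{\xi,\lambda}(\tilde c_k^i) = c_k^i$ for all $i,k$. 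The essential point, and the only place where the construction could have failed, is that the denominators produced by the recursion are captured by $\mathscr L$; this is not really an obstacle but rather a consequence of the very design of $\mathscr I$, which was set up in the preceding subsection precisely to absorb these quantities.
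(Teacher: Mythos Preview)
Your proposal is correct and follows essentially the same approach as the paper. The paper does not give a formal proof of this lemma; it is deduced from the preceding remark, which simply observes that the coevaluation formulas and the coefficients $c_k^i$ from equation~(\ref{eq:13}) defining $\psi^N_n$ can all be written in the variables $q$ and $s$ (with $s$ standing for $q^\lambda$) and hence lifted to $\mathscr I$. Your write-up makes this explicit---lifting the recursion, checking the denominators lie in $\mathscr L$, and rerunning the Step~1 cancellation argument formally---which is exactly the content the paper leaves implicit.
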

On the other hand, from the previous discussion, we have the isomorphism:
\begin{equation}\label{eq:15}
\big(\Theta^{N,n}|_{\iota}\big)^{-1}: {W^N_{2n-1, (n-1)(N-1)}|}_{\iota} \rightarrow {H^{\infty,N}_{2n-1, (n-1)(N-1)}|}_{\tilde{\gamma}}.
\end{equation}
\begin{proposition} \label{P:3} Putting together the formulas for the homology class $\mathscr F_{n}^{N,\lambda}$ from equation \ref{eq:14}, Lemma \ref{L:1} and the isomorphism \ref{eq:15}, we obtain that there exist a homology class 
$$\mathscr F_{n}^{N} \in \HH^{\infty,N}_{2n-1, (n-1)(N-1)}|_{\tilde{\gamma}}$$ over the ring in two variables $\mathscr I:= \Z[q^{\pm 1}, s^{\pm 1}](\mathscr L)\subseteq \Q(q,s)$ such that it globalises the homology classes parametrised by $\lambda$:
$${\mathscr F_{n}^{N}}|_{\tilde{\eta}_{\xi,\lambda}}=\mathscr F_{n}^{N,\lambda}.$$
\end{proposition}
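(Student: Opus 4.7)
The plan is to mirror the construction of $\mathscr F_n^{N,\lambda}$ from Section~\ref{Step4} one level up, carrying it out entirely over the two-variable ring $\mathscr I$ using the globalised ingredients now available, and then to verify that specialising via $\tilde{\eta}_{\xi,\lambda}$ recovers the original class.

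First I would take the globalised coevaluation vector $w^{n-1,N}_{Coev}\in W^N_{2(n-1),(n-1)(N-1)}|_\iota$ supplied by Lemma~\ref{L:1} and form $v_0 \otimes w^{n-1,N}_{Coev}$ inside ${W^N_{2n-1,(n-1)(N-1)}}|_\iota$. The key verification is that this tensor in fact lies in the globalised highest weight space: I would simply rerun the $K$- and $E$-action computations from Step~\ref{Step2} (equations~\ref{eq:8'} and~\ref{eq:9}) symbolically in $\Z[q^{\pm 1},s^{\pm 1}]$, with $s$ playing the role of $q^\lambda$. These are polynomial identities in the ring of coefficients, not specialised equalities, so they transfer verbatim from the $\C$-level argument.

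Next, I would invoke Proposition~\ref{P:2}~(b), which upgrades $\Theta^{N,n}$ to an $\mathscr I$-linear isomorphism
$$\Theta^{N,n}\colon {\HH^{\infty,N}_{2n-1,(n-1)(N-1)}}|_{\tilde{\gamma}}\;\longrightarrow\;{W^N_{2n-1,(n-1)(N-1)}}|_\iota,$$
and then define
$$\mathscr F^N_n := \bigl(\Theta^{N,n}\bigr)^{-1}\bigl(v_0 \otimes w^{n-1,N}_{Coev}\bigr).$$
For the specialisation identity, I would combine Proposition~\ref{P:2}~(c), which gives $\Theta^{N,n}|_{\tilde{\eta}_{\xi,\lambda}}=\Theta^{N,n}_{\xi,\lambda}$, with Lemma~\ref{L:1}, which gives $w^{n-1,N}_{Coev}|_{\tilde{\eta}_{\xi,\lambda}}=w^{n-1}_{Coev}$. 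Since specialisation commutes with the application of an $\mathscr I$-linear isomorphism and its inverse, we obtain ${\mathscr F^N_n}|_{\tilde{\eta}_{\xi,\lambda}}=\bigl(\Theta^{N,n}_{\xi,\lambda}\bigr)^{-1}(v_0 \otimes w^{n-1}_{Coev})=\mathscr F^{N,\lambda}_n$, the last equality being the definition~\ref{eq:12}.

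The only genuine subtlety is ensuring that $(\Theta^{N,n})^{-1}$ exists and remains compatible with specialisation. This is precisely why the ring $\mathscr I$ is built by inverting the multiplicative system $\mathscr L$, which includes $\det(\Theta^{N,n})$ together with all the denominators appearing in the inductive formulas~\ref{eq:13} for the coefficients $c_k^i$. The condition $\mathscr L\cap \ker(\eta_{\xi,\lambda})=\emptyset$ recorded in the remark preceding Proposition~\ref{P:2} then guarantees that $\tilde{\eta}_{\xi,\lambda}$ extends to the whole of $\mathscr I$, so no denominator becomes singular under the specialisation and the diagram of bases, specialisations and inverses commutes as required.
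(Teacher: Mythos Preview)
Your proposal is correct and follows exactly the approach the paper indicates in the statement of the proposition itself: the paper does not give a separate proof, but simply records that one assembles equation~\ref{eq:14}, Lemma~\ref{L:1}, and the isomorphism~\ref{eq:15} (equivalently Proposition~\ref{P:2}) to obtain the class. Your write-up fleshes this out with the natural definition $\mathscr F^N_n:=(\Theta^{N,n})^{-1}(v_0\otimes w^{n-1,N}_{Coev})$ and the specialisation check via Proposition~\ref{P:2}(c) and Lemma~\ref{L:1}, which is precisely what the paper intends.
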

\subsection{Globalisation of the second homology class- $\mathscr G_{n}^{N}$}
\

The construction of the second homology class $\mathscr G^{N,\lambda}_n$ from the step \ref{Step5}, was done using the evaluation map on the highest weight space:
$$Ev^{\lambda}_{N,n}: W^{\xi,\lambda}_{2n-1,(n-1)(N-1)}\rightarrow \C.$$

In order to define it, one uses the map below, which is described using the evaluations from the quantum group and the normalisation $\psi_n^N$, and then restrict it to the particular highest weight space $ W^{\xi,\lambda}_{2n-1,(n-1)(N-1)}$ (the one that is useful for the topological construction):
\begin{center}
\begin{tikzpicture}
[x=1mm,y=1mm]

\node (b1)  [color=black] at (-40,0)    {$\pi_{v_0} \circ \left(Id_{U^{\lambda}_N}\otimes \left(\ev^{n-1}_{\lambda} \circ...\circ \ev^{1}_{\lambda}\right)\right) \circ  {\color{blue} \left( Id^{\otimes n-1}_{(U_N(\lambda)} \otimes \left(\psi^N_n \right)^{-1} \right)}: U_N(\lambda)^{\otimes 2n-1}$};
\node (b2) [color=black] at (20,0)   {$\C$};
\node (b3) [color=black]   at (2,-15)   {$W^{\xi,\lambda}_{2n-1,(n-1)(N-1)}$};
\node (b4) [color=black]   at (-18,-15)   {$Ev^{\lambda}_{N,n}:$};
\node (b5) [color=red]   at (-18,-30)   {$\tilde{Ev}_{N,n}:$};
\node (b6) [color=red]   at (2,-30)   {$W^N_{2n-1,(n-1)(N-1)}|_{\iota}$};
\node (b7) [color=red]   at (20,-15)   {$\mathscr I$};

\draw[->,color=red]             (b6) to node[yshift=-3mm,font=\small]{} (b7);
\draw[->,color=green,dashed]             (b7) to node[yshift=-3mm,font=\small]{} (b2);
\draw[->,color=green,dashed]             (b6) to node[left,font=\small]{$\tilde{\eta}_{\xi,\lambda}$} (b3);

\draw[->]             (b1) to node[yshift=-3mm,font=\small]{} (b2);

\draw[->]             (-18,-3) to node[yshift=-3mm,font=\small]{} (b4);
\draw[->, dashed]             (b3) to [] node[yshift=-3mm,font=\small]{} (2,-3);
\draw[->]             (b3) to node[yshift=-3mm,font=\small]{} (b2);

\end{tikzpicture}
\end{center}
We notice that the formulas for the evaluations, come using the action of the generator $K$ of the quantum group. For the sequel, we notice that we can define this action onto $U_N^{\lambda}$ by the formulas:
$$K v_i=sq^{-2i}.$$
In other words, the part coming from the evaluations \ref{E:DualityForCat}, can be defined over the ring $\mathscr I:= \Z[q^{\pm 1}, s^{\pm 1}](\mathscr L)$. Also, as we have already discussed, the function $\psi_n^N$ can be globalised over this ring. This leads to the fact that one can extend the evaluation $Ev^{\lambda}_{N,n}$ towards an evaluation 
$$\tilde{Ev}_{N,n}: W^{N}_{2n-1,(n-1)(N-1)}|_{\iota}\rightarrow \mathscr I$$ such that
\begin{equation}\label{eq:16}
\tilde{Ev}_{N,n}|_{\tilde{\eta}_{\xi,\lambda}}=Ev^{\lambda}_{N,n}.
\end{equation}
On the other hand, we remind that the specialised homology class $\mathscr G_n^{N,\lambda}$ was prescribed by the formula given in the definition \ref{G}:
\begin{equation}\label{eq:17}
Ev^{\lambda}_{N,n} \circ { \Theta^{N,n}_{\xi,\lambda}}  (\cdot )=\ll \cdot, \mathscr G^{N,\lambda}_{n} \gg|_{\psi_{\xi,\lambda}} \ \ \in \  \left( \HH^{\infty,N}_{2n-1,(n-1)(N-1)} |_{\psi_{\xi,\lambda}} \right) ^*.
\end{equation}
We remark that the specialisations of coefficients satisfy the following relation:
\begin{equation}
\psi_{\xi,\lambda}=\tilde{\eta}_{\xi,\lambda} \circ \tilde{\gamma}.
\end{equation}
Regarding the topological pairing, using this remark, we notice that it commutes with the specialisation maps:
\begin{lemma}(The intersection pairing commutes with the specialisation map)\label{L:2} 
For any homology classes $$\mathscr F \in {\HH^{\infty,N}_{\big(2n-1,(n-1)(N-1)\big)}|}_{\tilde{\gamma}} \ \ \ \text{ and } \ \ \ \mathscr G \in {\HH^{\Delta,\partial,N}_{\big(2n-1,(n-1)(N-1)\big)}|}_{\tilde{\gamma}}$$
one has the following commuting property:
$$\tilde{\eta}_{\xi,\lambda} \big( \ll \mathscr F, \mathscr G \gg|_{\tilde{\gamma}} \big)= \ll \mathscr F|_{ \tilde{\eta}_{\xi,\lambda} }, \mathscr G|_{ \tilde{\eta}_{\xi,\lambda} } \gg|_{ \psi_{\xi,\lambda} }.$$
\end{lemma}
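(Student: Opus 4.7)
The plan is to reduce the claim to a computation on the standard generators and then appeal to Proposition~\ref{P:1}, which asserts that the pairing matrix equals the identity on the distinguished bases. Everything else should follow formally from the fact that $\tilde{\eta}_{\xi,\lambda}$ is a ring morphism and that $\psi_{\xi,\lambda} = \tilde{\eta}_{\xi,\lambda} \circ \tilde{\gamma}$.

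First, I would expand the two classes in the $\mathscr I$-bases indexed by $E^{N}_{2n-1,(n-1)(N-1)}$:
\begin{equation*}
\mathscr F = \sum_{e} a_{e}\, \{\{[[\tilde{\F}_{e}]]\}\}, \qquad
\mathscr G = \sum_{f} b_{f}\, [[\tilde{\D}_{f}]], \qquad a_{e},\, b_{f} \in \mathscr I.
\end{equation*}
That these generators remain bases after extension of scalars from $\Z[x^{\pm},d^{\pm}]$ to $\mathscr I$ follows from Proposition~\ref{P:20} combined with Definition~\ref{D:20} on the special side, and from the non-degeneracy corollary to Proposition~\ref{P:1} on the dual side. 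Invoking the $\mathscr I$-sesquilinearity of $\ll , \gg|_{\tilde{\gamma}}$ and the Kronecker-delta values given by Proposition~\ref{P:1}, the pairing collapses to $\sum_{e} a_{e}\, \overline{b_{e}} \in \mathscr I$. Applying the ring morphism $\tilde{\eta}_{\xi,\lambda}$ and using that it commutes with the bar involution (it sends $q,s$ compatibly with inversion to $\xi,\xi^{\lambda}$), the left-hand side of the claimed identity equals $\sum_{e} \tilde{\eta}_{\xi,\lambda}(a_{e})\, \overline{\tilde{\eta}_{\xi,\lambda}(b_{e})}$.

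Second, the specialised classes $\mathscr F|_{\tilde{\eta}_{\xi,\lambda}}$ and $\mathscr G|_{\tilde{\eta}_{\xi,\lambda}}$ are, by definition of the scalar extension, the expansions in the $\C$-bases with coefficients $\tilde{\eta}_{\xi,\lambda}(a_{e})$ and $\tilde{\eta}_{\xi,\lambda}(b_{f})$ respectively. Proposition~\ref{P:1} again provides the identity matrix for the pairing $\ll , \gg|_{\psi_{\xi,\lambda}}$ on these specialised generators, since the values $\delta_{e,f}$ are integers and therefore unaffected by any specialisation. Hence the right-hand side produces the identical sum $\sum_{e} \tilde{\eta}_{\xi,\lambda}(a_{e})\, \overline{\tilde{\eta}_{\xi,\lambda}(b_{e})}$, and the equality holds.

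The only mildly delicate point is verifying that the bar involution defining the sesquilinear structure intertwines coherently along the chain $\Z[x^{\pm},d^{\pm}] \xrightarrow{\tilde{\gamma}} \mathscr I \xrightarrow{\tilde{\eta}_{\xi,\lambda}} \C$; this is immediate from the formulas in Definition~\ref{D:1}, after which the argument is purely formal and does not require any further topological input.
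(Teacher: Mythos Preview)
Your approach is essentially the paper's, made explicit. The paper does not give a detailed proof: it simply records the factorisation $\psi_{\xi,\lambda}=\tilde{\eta}_{\xi,\lambda}\circ\tilde{\gamma}$ immediately before the lemma and asserts that the pairing commutes with specialisation as a formal consequence. Your expansion in the multifork/scan bases and appeal to Proposition~\ref{P:1} (identity pairing matrix) is exactly how one would unpack that one-line justification.

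One caveat worth flagging. Your final paragraph claims that $\tilde{\eta}_{\xi,\lambda}$ intertwines the bar involutions. With the natural bar on $\mathscr I$ given by $q\mapsto q^{-1}$, $s\mapsto s^{-1}$ and complex conjugation on $\C$, this would require $\overline{\xi^{\lambda}}=\xi^{-\lambda}$, which fails for $\lambda\in\C\setminus\R$. The paper does not actually use any sesquilinear structure on the $\C$-level pairing, and your argument does not need it either: since the pairing matrix on generators is the \emph{integer} matrix $\delta_{e,f}$, both $\ll\,,\,\gg|_{\tilde{\gamma}}$ and $\ll\,,\,\gg|_{\psi_{\xi,\lambda}}$ are completely determined by their values on basis pairs, and those values are fixed by every ring morphism. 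So you may simply drop the bar-compatibility remark, treat the specialised pairings as bilinear, and the computation $\tilde{\eta}_{\xi,\lambda}\big(\sum_e a_e\,b'_e\big)=\sum_e \tilde{\eta}_{\xi,\lambda}(a_e)\,\tilde{\eta}_{\xi,\lambda}(b'_e)$ (with $b'_e$ standing for whatever the second-slot coefficient is after the pairing convention) goes through from the ring-morphism property alone.
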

We summarise this in the following diagram.
\definecolor{cadmiumgreen}{rgb}{0.0, 0.42, 0.24}
\begin{center}
\begin{tikzpicture}
[x=1mm,y=1mm]

\node (b1)  [color=orange] at (-32,0)    {$W^{\xi,\lambda}_{\big(2n-1,(n-1)(N-1)\big)}$};
\node (b2) [color=blue] at (11,0)   {${\HH^{\infty,N}_{\big(2n-1,(n-1)(N-1)\big)}|}_{\psi_{\xi,\lambda}}$};
\node (b3) [color=teal]   at (50,0)   {${\HH^{\Delta,\partial,N}_{\big(2n-1,(n-1)(N-1)\big)}|}_{\psi_{\xi,\lambda}}$};
\node (b4)  [color=green] at (0,26)    {${\boldsymbol{\mathbb C}}$};
\node (b22)  [color=blue] at (29,0)    {$\otimes$};
\node (b5)  [color=cyan] at (47,8)    {$\boldsymbol{\ll \cdot, {\color{teal}\mathscr G_{n}^{N,\lambda}}{\color{cyan} \gg_{\psi_{\xi,\lambda}}}}$};

\draw[->]             (b2) to node[yshift=-3mm,font=\small]{$\Theta_{\xi,\lambda}^N$} (b1);
\draw[->,color=green]   (b1)      to node[left,font=\small, xshift=-5mm]{$Ev_{n,N}^{\lambda}$}                           (b4);
\draw[->,color=cyan, dashed]   (28,10)      to node[right,font=\small, yshift=3mm]{$\ll ,\gg_{\psi_{\xi,\lambda}}$}                           (b4);
\draw[->,color=cadmiumgreen, thick]   (10,7)      to node[left,font=\small, yshift=-8mm,xshift=2mm]{$\boldsymbol{Ev_{n,N}^{\lambda}\circ \Theta_{\xi,\lambda}^N}$}                           (b4);
\draw[<->, color=cyan, thick]             (10,6)     to[in=160,out=20] node[right,yshift=-3mm,xshift=-5mm,font=\small]{$$}   (35,6);
\end{tikzpicture}
\end{center}
\begin{center}
\begin{tikzpicture}
[x=1mm,y=1mm]

\node (b1)  [color=red] at (-32,0)    {${W^{N}_{\big(2n-1,(n-1)(N-1)\big)}|_{\iota}}$};
\node (b2) [color=blue] at (11,0)   {${\HH^{\infty,N}_{\big(2n-1,(n-1)(N-1)\big)}|}_{\tilde{\gamma}}$};
\node (b3) [color=teal]   at (50,0)   {${\HH^{\Delta,\partial,N}_{\big(2n-1,(n-1)(N-1)\big)}|}_{\tilde{\gamma}}$};
\node (b4)  [color=red] at (0,26)    {$\mathscr I=\mathbb Z[s^{\pm},q^{\pm1}](\mathscr L)$};
\node (b22)  [color=blue] at (30,0)    {$\otimes$};
\node (b5)  [color=cyan] at (47,8)    {$\boldsymbol{{\ll \cdot, {\color{teal}\mathscr G_{n}^{N}}{\color{cyan} \gg}\color{cyan}|}_{\color{cyan}\tilde{\gamma}}}$};

\draw[->,color=green,very thick,dashed]             (-35,10) to node[left,yshift=-1mm,font=\small]{$\boldsymbol{\tilde{\psi}_{\xi,\lambda}} $} (-35,28);
\draw[->]             (b2) to node[yshift=-3mm,font=\small]{$\Theta^{N,n}$} (b1);
\draw[->,color=red]   (b1)      to node[left,font=\small, xshift=-5mm]{$\tilde{Ev}_{n,N}$}                           (b4);
\draw[->,color=cyan, dashed]   (25,10)      to node[right,font=\small, yshift=3mm]{$\ll ,\gg_{\tilde{\gamma}}$}                           (b4);
\draw[->,color=cadmiumgreen, thick]   (10,7)      to node[left,font=\small, yshift=-8mm,xshift=2mm]{$\boldsymbol{\tilde{Ev}_{n,N}\circ \Theta^{N,n}}$}                           (b4);
\draw[<->, color=cyan, thick]             (10,6)     to[in=160,out=20] node[right,yshift=-3mm,xshift=-5mm,font=\small]{$ $}   (35,6);
\end{tikzpicture}
\end{center}
Since the topological pairing $\ll, \gg$ described in proposition \ref{P:1} has the identity matrix in the bases corresponding to multiforks and scans, it follows that any specialisation of it will be non-degenerate. We obtain the following:
\begin{remark}
The pairing $\ll, \gg|_{\tilde{\gamma}}$ is non-degenerate over the ring $\mathscr I$.
\end{remark}  
\begin{corollary}(Globalisation of the second homology class $\mathscr G_{n}^{N}$)\\
There exists a unique homology class $\mathscr G^{N}_{n} \in \HH^{\Delta,\partial,N}_{2n-1,(n-1)(N-1)} |_{\tilde{\gamma}}$ such that
\begin{equation}\label{eq:18}
\tilde{Ev}_{N,n} \circ { \Theta^{N,n}}  (\cdot )=\ll \cdot, \mathscr G^{N}_{n} \gg|_{\tilde{\gamma}}.
\end{equation}
\end{corollary}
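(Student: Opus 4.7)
The plan is to realise $\mathscr G^{N}_{n}$ as the Riesz-type representative of the linear functional $\tilde{Ev}_{N,n}\circ \Theta^{N,n}$ with respect to the non-degenerate intersection form $\ll , \gg|_{\tilde{\gamma}}$. Concretely, the composition
$$\tilde{Ev}_{N,n} \circ \Theta^{N,n} \ :\ \HH^{\infty,N}_{2n-1,(n-1)(N-1)}|_{\tilde{\gamma}} \ \longrightarrow \ \mathscr I$$
is a morphism of $\mathscr I$-modules, being the composition of the module isomorphism from Proposition \ref{P:2} with the globalised evaluation $\tilde{Ev}_{N,n}$ built in equation \eqref{eq:16}. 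So the whole problem reduces to the statement that every $\mathscr I$-linear functional on $\HH^{\infty,N}_{2n-1,(n-1)(N-1)}|_{\tilde{\gamma}}$ is uniquely represented by a pairing with an element of $\HH^{\Delta,\partial,N}_{2n-1,(n-1)(N-1)}|_{\tilde{\gamma}}$.

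For this representability, I would invoke the computation of $\ll [[\tilde{\F}_e]], [[\tilde{\D}_f]] \gg = \delta_{e,f}$ from Proposition \ref{P:1}, which persists after specialisation to $\mathscr I$ since $\tilde\gamma$ is just a change of scalars and the matrix is already the identity over $\mathbb Z[x^{\pm},d^{\pm}]$. Combined with Remark \ref{R:bases}, this tells us that the multifork classes $\{\{\{[[\tilde{\F}_e]]\}\}\}_{e\in E^N_{2n-1,(n-1)(N-1)}}$ and the scan classes $\{[[\tilde{\D}_f]]\}_{f\in E^N_{2n-1,(n-1)(N-1)}}$ are dual bases of the two finitely generated free $\mathscr I$-modules, and that the map
$$\HH^{\Delta,\partial,N}_{2n-1,(n-1)(N-1)}|_{\tilde{\gamma}}\ \longrightarrow\ \Hom_{\mathscr I}\bigl(\HH^{\infty,N}_{2n-1,(n-1)(N-1)}|_{\tilde{\gamma}},\mathscr I\bigr),\quad \mathscr G \mapsto \ll\cdot,\mathscr G\gg|_{\tilde\gamma}$$
is an isomorphism of $\mathscr I$-modules.

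Given this, the construction and uniqueness of $\mathscr G^{N}_{n}$ are forced. Explicitly, I would set
$$\mathscr G^{N}_{n} \ := \ \sum_{f\in E^N_{2n-1,(n-1)(N-1)}} \bigl(\tilde{Ev}_{N,n}\circ \Theta^{N,n}\bigr)\bigl(\{\{[[\tilde{\F}_f]]\}\}\bigr)\ [[\tilde{\D}_f]]$$
and verify \eqref{eq:18} on the basis $\{\{\{[[\tilde{\F}_e]]\}\}\}_e$ by the orthogonality relation above; uniqueness is just the non-degeneracy of $\ll,\gg|_{\tilde\gamma}$. It is also worth recording, for compatibility with the previous definition of $\mathscr G^{N,\lambda}_{n}$, that applying $\tilde\eta_{\xi,\lambda}$ to both sides of \eqref{eq:18} and using Lemma \ref{L:2} together with $\tilde{Ev}_{N,n}|_{\tilde\eta_{\xi,\lambda}}=Ev^{\lambda}_{N,n}$ and $\Theta^{N,n}|_{\tilde\eta_{\xi,\lambda}}=\Theta^{N,n}_{\xi,\lambda}$ yields $\mathscr G^{N}_{n}|_{\tilde\eta_{\xi,\lambda}}=\mathscr G^{N,\lambda}_{n}$, matching definition \ref{G}.

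There is essentially no obstacle in the sense of hard new input — the key technical work (globalising $\Theta^{N,n}$ to an isomorphism over $\mathscr I$, globalising $Ev$, and computing that the pairing matrix is the identity) has already been done in Proposition \ref{P:2}, Proposition \ref{P:1} and Lemma \ref{L:2}. The only point requiring care is the choice of the ring $\mathscr I$: one has to check that inverting the multiplicative system $\mathscr L$ is simultaneously enough to invert $\det(\Theta^{N,n})$ \emph{and} compatible with the specialisation $\tilde\eta_{\xi,\lambda}$ for every $\lambda\in \C\setminus \Z$, which is exactly the condition $\mathscr L \cap \ker(\eta_{\xi,\lambda})=\emptyset$ recorded earlier.
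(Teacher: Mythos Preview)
Your proposal is correct and follows essentially the same route as the paper: the corollary is an immediate consequence of the remark preceding it that $\ll,\gg|_{\tilde\gamma}$ is non-degenerate over $\mathscr I$ (because its matrix in the multifork/scan bases is the identity, by Proposition~\ref{P:1}), so any $\mathscr I$-linear functional on $\HH^{\infty,N}_{2n-1,(n-1)(N-1)}|_{\tilde\gamma}$ is uniquely represented by pairing against an element of the dual module. Your version is simply more explicit---writing out the coordinate formula for $\mathscr G^{N}_{n}$---and your final paragraph on compatibility with $\tilde\eta_{\xi,\lambda}$ actually anticipates the content of the next result in the paper (Proposition~\ref{P:4}), which is proved there separately along the same lines you sketch.
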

\begin{proposition}\label{P:4}  
Combining the globalisation properties from above, we obtain that: 
$${\mathscr G_{n}^{N}}|_{\tilde{\eta}_{\xi,\lambda}}=\mathscr G_{n}^{N,\lambda}.$$ 
\end{proposition}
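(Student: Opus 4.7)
The plan is to apply the specialisation $\tilde{\eta}_{\xi,\lambda}$ to the defining relation \eqref{eq:18} of $\mathscr G_{n}^{N}$, commute the specialisation past the topological pairing using Lemma \ref{L:2}, and then invoke the non-degeneracy of the specialised pairing together with the uniqueness in Definition \ref{G} to identify the two classes. Concretely, the key inputs are: the globalisation of the evaluation ($\tilde{Ev}_{N,n}|_{\tilde{\eta}_{\xi,\lambda}} = Ev^{\lambda}_{N,n}$) from equation \eqref{eq:16}, the globalisation of the quantum–homological identification ($\Theta^{N,n}|_{\tilde{\eta}_{\xi,\lambda}} = \Theta^{N,n}_{\xi,\lambda}$) from Proposition \ref{P:2}(c), and the compatibility of the intersection pairing with specialisation from Lemma \ref{L:2}.

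First I would test the defining identity \eqref{eq:18} against an arbitrary globalised class $\mathscr F \in \HH^{\infty,N}_{2n-1,(n-1)(N-1)}|_{\tilde{\gamma}}$ obtained by lifting a specialised class via the basis $\mathscr B_{\HH^{\infty,N}_{2n-1,(n-1)(N-1)}}$ (whose elements are defined over $\Z[x^{\pm},d^{\pm}]$ and so lift freely to the coefficient ring $\mathscr I$). Applying $\tilde{\eta}_{\xi,\lambda}$ to both sides and using Lemma \ref{L:2} on the right-hand side, together with the two globalisation identities above on the left-hand side, I obtain
\[
Ev^{\lambda}_{N,n} \circ \Theta^{N,n}_{\xi,\lambda}\bigl(\mathscr F|_{\tilde{\eta}_{\xi,\lambda}}\bigr) \;=\; \ll \mathscr F|_{\tilde{\eta}_{\xi,\lambda}} , \; \mathscr G^{N}_{n}|_{\tilde{\eta}_{\xi,\lambda}} \gg|_{\psi_{\xi,\lambda}} .
\]
Comparing this with the defining relation of $\mathscr G^{N,\lambda}_{n}$ from Definition \ref{G}, namely
\[
Ev^{\lambda}_{N,n} \circ \Theta^{N,n}_{\xi,\lambda}(\cdot) \;=\; \ll \cdot , \; \mathscr G^{N,\lambda}_{n} \gg|_{\psi_{\xi,\lambda}} ,
\]
shows that $\mathscr G^{N}_{n}|_{\tilde{\eta}_{\xi,\lambda}}$ and $\mathscr G^{N,\lambda}_{n}$ pair identically with $\mathscr F|_{\tilde{\eta}_{\xi,\lambda}}$ for every such $\mathscr F$.

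To conclude, I would observe that specialisations of globalised multifork classes span $\HH^{\infty,N}_{2n-1,(n-1)(N-1)}|_{\psi_{\xi,\lambda}}$ (the specialisation of the basis $\mathscr B_{\HH^{\infty,N}_{2n-1,(n-1)(N-1)}}$ is precisely the basis after specialisation, by Notation \ref{N:spec} and Proposition \ref{P:20}). Hence the two linear functionals $\ll \cdot , \mathscr G^{N}_{n}|_{\tilde{\eta}_{\xi,\lambda}} \gg|_{\psi_{\xi,\lambda}}$ and $\ll \cdot , \mathscr G^{N,\lambda}_{n} \gg|_{\psi_{\xi,\lambda}}$ agree on a spanning set, hence on the whole space. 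Non-degeneracy of the specialised topological pairing (Corollary \ref{C:pairing}) then forces $\mathscr G^{N}_{n}|_{\tilde{\eta}_{\xi,\lambda}} = \mathscr G^{N,\lambda}_{n}$.

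The only delicate point—and the place where I expect the main care to be needed—is the verification that the diagram relating the globalised and specialised sides genuinely commutes on the nose (not merely up to isomorphism), i.e.\ that the identifications $\eta_{\xi,\lambda} = \iota \circ \tilde{\eta}_{\xi,\lambda}$ and $\psi_{\xi,\lambda} = \tilde{\eta}_{\xi,\lambda} \circ \tilde{\gamma}$ feed cleanly through the Lefschetz-type intersection form of Lemma \ref{L:2}. Once this bookkeeping is in place, the argument reduces to the uniqueness statement in Definition \ref{G} applied over $\mathscr I$-coefficients and then specialised, which is precisely what non-degeneracy of $\ll , \gg|_{\tilde{\gamma}}$ and $\ll , \gg|_{\psi_{\xi,\lambda}}$ provides.
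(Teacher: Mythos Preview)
Your proposal is correct and follows essentially the same approach as the paper: specialise the defining identity \eqref{eq:18} via $\tilde{\eta}_{\xi,\lambda}$, use Lemma \ref{L:2}, equation \eqref{eq:16} and Proposition \ref{P:2}(c) to identify the resulting functional with $\ll \cdot,\mathscr G^{N,\lambda}_n\gg|_{\psi_{\xi,\lambda}}$, and conclude by non-degeneracy (Corollary \ref{C:pairing}). Your extra remark that specialised multiforks span the domain is a welcome clarification that the paper leaves implicit.
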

\begin{proof}
We will use the results concerning the evaluation functions given in equation \ref{eq:16} and   \ref{eq:17}, the definiton of the globalised class $\mathscr G_{n}^{N}$ from \ref{eq:18} and the commuting property of the pairing \ref{L:2}. 
Using these properties, we have:
$$\ll \cdot \ , \mathscr G^{N,\lambda}_n  \gg|_{ \psi_{\xi,\lambda} }=^{Eq\ \ref{eq:17}} \ Ev_{N,n}^{\lambda}\circ \Theta_{\xi,\lambda}^{N,n}(\cdot)= \ \ \ \ \ \ \ \ \ $$
$$=^{Eq\ \ref{eq:16} \ Prop \ref{P:2}} \ {\tilde{Ev}_{N,n}|}_{\tilde{\eta}_{\xi,\lambda}}\circ {\Theta^{N,n}|}_{\tilde{\eta}_{\xi,\lambda}}(\cdot)= \ \ \ \ \ \ \ \ \ $$
$$=\tilde{\eta}_{\xi,\lambda}\big(\tilde{Ev}_{N,n}\circ \Theta^{N,n}(\cdot)\big)=^{Eq\ \ref{eq:18}} \ \tilde{\eta}_{\xi,\lambda} \big( \ll \cdot \ , \mathscr G^{N}_n  \gg|_{ \tilde{\gamma} } \big) $$
$$ \ \ \ \ \ \ \ \ \ \ =^{Lemma \ \ref{L:2}} \ \  \ll \cdot \ , {\mathscr G^{N}_n|}_{\tilde{\eta}_{\xi,\lambda}}  \gg|_{\psi_{\xi,\lambda}} ,  \ \ \forall \cdot \in H^{\infty,N}_{2n-1,(n-1)(N-1)} |_{\psi_{\xi,\lambda}} .$$
Using the form of the pairing given in the corollary \ref{C:pairing}, one has that $\ll ,\gg|_{ \psi_{\xi,\lambda} }$ is non-degenerate. This shows that the two homology classes coincide and concludes the proof.
\end{proof}
\subsection{Braid group actions}
So far, we have constructed the globalisation of the homology classes that lead to the coloured Alexander invariant, over an extension of the ring of Laurent polynomials in two variables. This part concerns the braid part (more precisely encoded into the braid group action onto the special Lawrence representation) and the globalisation of the topological pairing. We summarise these two parts in the following diagram. The commutativity of the pairing with respect to the specialisation map was discussed in \ref{L:2} and refers to the blue part of the diagram.  

\begin{center}
\begin{tikzpicture}
[x=1mm,y=1mm]

\node (t1) [color=blue]   at (0,-8)   {$\mathscr F_{n}^N$};
\node (t2) [color=teal]   at (40,-8)   {$\mathscr G_{n}^N$};
\node (t1') [color=blue]   at (0,-22)   {$\mathscr F_{n,\lambda}^N$};
\node (t2') [color=teal]   at (40,-22)   {$\mathscr G_{n,\lambda}^N$};

\node (c0) [color=brickred]   at (-10,-12)   {$\boldsymbol{\equiv}$\ \ \ \ref{L:3}};
\node (c0') [color=cyan]   at (60,-12)   {$\boldsymbol{\equiv}$ \ \ \ \ref{L:2}};

\node (b0) [color=brickred]   at (-20,7)   {$B_{2n-1}\curvearrowright$};
\node (b0') [color=brickred]   at (-20,-23)   {$B_{2n-1}\curvearrowright$};

\node (b1) [color=cyan]   at (-30,0)   {$\ll ,\gg_{\tilde{\gamma}}:$};
\node (b2)  [color=black] at (0,0)    {${\HH^{\infty,N}_{\big(2n-1,(n-1)(N-1)\big)}|}_{\tilde{\gamma}}$};
\node (b3) [color=black] at (40,0)   {${\HH^{\Delta,\partial,N}_{\big(2n-1,(n-1)(N-1)\big)}|}_{\tilde{\gamma}}$};
\node (b3') [color=black] at (20,0)   {$\bigotimes$};

\node (b4) [color=red]   at (80,0)   {$\mathscr I=\mathbb Z[s^{\pm},q^{\pm1}](\mathscr L)$};
\node (b1') [color=cyan]   at (-30,-30)   {$\ll ,\gg_{\psi_{\xi,\lambda}}:$};
\node (b2')  [color=black] at (0,-30)    {${\HH^{\infty,N}_{\big(2n-1,(n-1)(N-1)\big)}|}_{\psi_{\xi,\lambda}}$};
\node (b3') [color=black] at (40,-30)   {${\HH^{\Delta,\partial,N}_{\big(2n-1,(n-1)(N-1)\big)}|}_{\psi_{\xi,\lambda}}$};
\node (b3'') [color=black] at (20,-30)   {$\bigotimes$};

\node (b4') [color=green]   at (80,-30)   {$\C$};

\draw[->,color=green,dashed]             (b4) to node[yshift=-3mm,font=\small]{} (b4');
\draw[->,color=green,dashed]             (b1) to node[left,font=\small]{$\tilde{\eta}_{\xi,\lambda}$} (b1');
\draw[->,color=black]             (b3) to node[yshift=-3mm,font=\small]{} (b4);
\draw[->,color=black]             (b3') to node[yshift=-3mm,font=\small]{} (b4');
\draw[->,color=green,dashed]             (t1) to node[yshift=-3mm,font=\small]{} (t1');
\draw[->,color=green,dashed]             (t2) to node[yshift=-3mm,font=\small]{} (t2');
\draw[->,color=green,dashed]             (b0) to node[yshift=-3mm,font=\small]{} (b0');



\end{tikzpicture}
\end{center}
The following lemma tells us that the braid group action commutes with the specialisation of the coefficients. This refers to the pink part of the diagram and follows directly from the definition of the specialisation of a module, as described in the notation \ref{N:spec}.

\begin{lemma}(Braid group action onto specialised Lawrence representation commutes with specialisation map)\label{L:3}\\
For any homology class $$\mathscr F \in {\HH^{\infty,N}_{\big(2n-1,(n-1)(N-1)\big)}|}_{\tilde{\gamma}}$$
and any braid $\beta \in B_{2n-1}$, one has the following commuting properties:
$$\big( \beta \curvearrowright \mathscr F  \big)|_{\tilde{\eta}_{\xi,\lambda}}= \beta \curvearrowright \big( \mathscr F |_{ \tilde{\eta}_{\xi,\lambda} }\big).$$
\end{lemma}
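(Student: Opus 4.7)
The plan is to reduce the statement to the general fact that specialisation of modules through a composable tower of ring morphisms is itself compositional, and to verify that this is exactly the situation at hand. I would work with the three-ring tower
$$\Z[x^{\pm1},d^{\pm1}] \longrightarrow \mathscr I \longrightarrow \C$$
given by $\tilde{\gamma}$ followed by $\tilde{\eta}_{\xi,\lambda}$, and first recall, as noted in the paper, that the composite coincides with $\psi_{\xi,\lambda}$. This is a one-line verification on generators: $\tilde{\gamma}(x)=s^{2}$ and $\tilde{\gamma}(d)=-q^{-2}$, while $\tilde{\eta}_{\xi,\lambda}(s)=\xi^{\lambda}$ and $\tilde{\eta}_{\xi,\lambda}(q)=\xi$, giving $\tilde{\eta}_{\xi,\lambda}(\tilde{\gamma}(x))=\xi^{2\lambda}=\psi_{\xi,\lambda}(x)$ and $\tilde{\eta}_{\xi,\lambda}(\tilde{\gamma}(d))=-\xi^{-2}=\psi_{\xi,\lambda}(d)$.

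Next I would fix the multifork basis $\mathscr B_{\HH^{\infty,N}_{2n-1,(n-1)(N-1)}}$ and record the matrix $M(\beta)$ of the braid action $l^{\infty,N}_{2n-1,(n-1)(N-1)}(\beta)$ with entries in $\Z[x^{\pm1},d^{\pm1}]$. By the very definition of a specialised module (Notation \ref{N:spec}), the braid action on $\HH^{\infty,N}_{2n-1,(n-1)(N-1)}|_{\tilde{\gamma}}$ is represented in the specialised basis by $\tilde{\gamma}(M(\beta))$, while the action on $\HH^{\infty,N}_{2n-1,(n-1)(N-1)}|_{\psi_{\xi,\lambda}}$ is represented by $\psi_{\xi,\lambda}(M(\beta))$. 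The composition identity above then gives $\psi_{\xi,\lambda}(M(\beta))=\tilde{\eta}_{\xi,\lambda}(\tilde{\gamma}(M(\beta)))$ entry by entry.

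To conclude, I would expand an arbitrary class as $\mathscr F = \sum_e \alpha_e \{\{[[\tilde{\F}_e]]\}\}$ with $\alpha_e \in \mathscr I$, and compare the two sides directly. The left-hand side $(\beta \curvearrowright \mathscr F)|_{\tilde{\eta}_{\xi,\lambda}}$ first applies $\tilde{\gamma}(M(\beta))$ to the coefficient tuple $(\alpha_e)$ and then specialises each resulting coefficient by $\tilde{\eta}_{\xi,\lambda}$; the right-hand side $\beta \curvearrowright (\mathscr F|_{\tilde{\eta}_{\xi,\lambda}})$ first specialises $(\alpha_e)$ by $\tilde{\eta}_{\xi,\lambda}$ and then applies $\psi_{\xi,\lambda}(M(\beta))$. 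Both produce the same element
$$\sum_{e,e'} \tilde{\eta}_{\xi,\lambda}(\alpha_e) \cdot \psi_{\xi,\lambda}(M(\beta))_{e',e} \, \{\{[[\tilde{\F}_{e'}]]\}\} \; \in \; \HH^{\infty,N}_{2n-1,(n-1)(N-1)}|_{\psi_{\xi,\lambda}}.$$

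There is no substantive obstacle here: the lemma is a naturality statement for the base-change functor $-\otimes_{\Z[x^{\pm1},d^{\pm1}]}-$ applied to the equivariant module map encoded by the braid action, and the only content is the composition identity between the three specialisations. Most of the writing simply unpacks Notation \ref{N:spec} so that the identification of the two sides becomes transparent.
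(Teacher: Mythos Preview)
Your proof is correct and takes precisely the paper's approach: the paper asserts that the lemma follows directly from the definition of the specialisation of a module (Notation \ref{N:spec}), and you have simply unpacked this functoriality of base change explicitly in coordinates via the matrix $M(\beta)$ and the identity $\psi_{\xi,\lambda}=\tilde{\eta}_{\xi,\lambda}\circ\tilde{\gamma}$. One small caveat is that the truncated action $l^{\infty,N}$ is defined in the paper only after specialisation, so it is cleaner to take $M(\beta)$ as the matrix of the untruncated action $l^{\infty}_{n,m}$ over $\Z[x^{\pm1},d^{\pm1}]$ (which then descends to the quotient upon specialisation) rather than asserting a matrix for $l^{\infty,N}$ over $\Z[x^{\pm1},d^{\pm1}]$ directly, but this does not affect the substance of your argument.
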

\begin{notation}\label{N:3}
We remind that the pairing $$\ll , \gg |_{\tilde{\gamma}}: \HH^{\infty, N}_{n,m}|_{\tilde{\gamma}} \otimes \HH^{\Delta,\partial,N}_{n,m} |_{\tilde{\gamma}}\rightarrow \mathscr I$$
$$\tilde{\eta}_{\xi,\lambda}:\mathscr I \rightarrow \C.$$
Then, we denote the evaluation of this specialisation applied to the pairing in two variables $\ll , \gg |_{\tilde{\gamma}}$ by:
$$\ll \cdot , \cdot \gg _{\tilde{\eta}_{\xi,\lambda}}=\tilde{\eta}_{\xi,\lambda}(\ll \cdot, \cdot \gg |_{\tilde{\gamma}}).$$
\end{notation}

Collecting together the results from this sections, one obtains that the coloured Alexander invariant can be obtained by the generic pairing between the globalised homology classes, which is then evaluated through the specialisation $\tilde{\eta}_{\xi,\lambda}$.

 More precisely, we obtain the topological model from Theorem \ref{THEOREM}:
\begin{theorem*}(Topological model for coloured Alexander invariant with globalised classes)

$$\Phi_N(L,\lambda)= \xi^{(N-1)\lambda w(\beta_n)} \ll (\beta_n \cup I^{n-1}) \mathscr{F}^N_n , \mathscr{G}^N_n\gg_{\tilde{\eta}_{\xi,\lambda}}, \ \ \ \forall \lambda \in \C \setminus \Z.$$ 
\end{theorem*}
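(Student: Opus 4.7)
The plan is to derive the globalised statement as a direct consequence of the specialised Theorem \ref{C1:pr2}, combined with the four compatibility results established at the end of Section \ref{10}, namely Propositions \ref{P:3} and \ref{P:4} (which identify the $\tilde{\eta}_{\xi,\lambda}$-specialisations of $\mathscr F^N_n$ and $\mathscr G^N_n$ with the $\lambda$-dependent classes $\mathscr F^{N,\lambda}_n$ and $\mathscr G^{N,\lambda}_n$) together with Lemmas \ref{L:2} and \ref{L:3} (which express the equivariance of the intersection pairing and of the braid group action with respect to specialisation). The entire argument is a chain of equalities; no new topological computation is needed.

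More concretely, I would start by fixing $\lambda \in \C\setminus\Z$ and a braid presentation $L=\hat{\beta}_n$, and then invoke Theorem \ref{C1:pr2} to rewrite $\Phi_N(L,\lambda)$ as $\xi^{(N-1)\lambda w(\beta_n)}\ll (\beta_n\cup\mathbb I_{n-1})\mathscr F^{N,\lambda}_n,\mathscr G^{N,\lambda}_n\gg|_{\psi_{\xi,\lambda}}$. Next I would substitute $\mathscr F^{N,\lambda}_n={\mathscr F^N_n}|_{\tilde{\eta}_{\xi,\lambda}}$ and $\mathscr G^{N,\lambda}_n={\mathscr G^N_n}|_{\tilde{\eta}_{\xi,\lambda}}$ using Propositions \ref{P:3} and \ref{P:4}. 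Then I would apply Lemma \ref{L:3} to push the braid action through the specialisation, i.e.\ $(\beta_n\cup\mathbb I_{n-1})\bigl({\mathscr F^N_n}|_{\tilde{\eta}_{\xi,\lambda}}\bigr) = \bigl((\beta_n\cup\mathbb I_{n-1})\mathscr F^N_n\bigr)|_{\tilde{\eta}_{\xi,\lambda}}$, and finally Lemma \ref{L:2} to pull the specialisation outside the intersection pairing. Interpreting the result via Notation \ref{N:3} gives exactly the right-hand side of the globalised theorem.

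The key point that justifies this purely formal argument is that every geometric object we work with on the homological side is already defined over the two-variable ring $\mathscr I$, and the specialisation $\tilde{\eta}_{\xi,\lambda}:\mathscr I\to\C$ factors $\psi_{\xi,\lambda}$ through $\tilde{\gamma}$; this is recorded in the commuting square $\psi_{\xi,\lambda}=\tilde{\eta}_{\xi,\lambda}\circ\tilde{\gamma}$ used in the proof of Proposition \ref{P:4}. The only place where one should be careful is that $\tilde{\eta}_{\xi,\lambda}$ is a priori only defined on $\Z[q^{\pm},s^{\pm}]$; extending it to $\mathscr I$ requires that the multiplicative system $\mathscr L$ be disjoint from $\ker \eta_{\xi,\lambda}$, and this is precisely why the hypothesis $\lambda\in\C\setminus\Z$ (generic with respect to $\xi$) appears in the statement.

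I expect no real obstacle in this proof, since all the heavy lifting has been done in the previous sections: the quantum-side identifications (Section \ref{4}), the non-degeneracy of the topological pairing at roots of unity (Corollary \ref{C:pairing}), and the explicit construction of the globalised classes (Propositions \ref{P:3}, \ref{P:4}). If anything is delicate, it is checking the invertibility of $\Theta^{N,n}$ over $\mathscr I$ used implicitly when identifying $\mathscr F^N_n$ via $(\Theta^{N,n})^{-1}$; but this is already covered by Proposition \ref{P:2}(b), which is why $\mathscr L$ was designed to contain $\det(\Theta^{N,n})$. So the proof should reduce to a clean assembly of these ingredients in four or five lines of equalities.
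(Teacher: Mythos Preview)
Your proposal is correct and matches the paper's own proof essentially step for step: the paper also assembles Notation \ref{N:3}, Lemma \ref{L:2}, Lemma \ref{L:3}, Propositions \ref{P:3} and \ref{P:4}, and Theorem \ref{C1:pr2} into a short chain of equalities, merely written in the reverse direction (starting from the globalised pairing and reducing to the specialised one). Your remarks about why $\tilde{\eta}_{\xi,\lambda}$ extends to $\mathscr I$ and about the role of Proposition \ref{P:2}(b) are accurate and already implicit in the paper's construction.
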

\begin{proof}$$ \xi^{(N-1)\lambda w(\beta_n)} \ll (\beta_n \cup \mathbb I_{n-1})) \mathscr{F}^N_n , \mathscr{G}^N_n\gg_{\tilde{\eta}_{\xi,\lambda}}=$$
$$=^{Not \ \ref{N:3}} \xi^{(N-1)\lambda w(\beta_n)}  {\tilde{\eta}_{\xi,\lambda}} \big({\ll (\beta_n \cup \mathbb I_{n-1})) \mathscr{F}^N_n , \mathscr{G}^N_n\gg|}_{\tilde{\gamma}}\big)=$$
$$=^{Lemma \ \ref{L:2}} \xi^{(N-1)\lambda w(\beta_n)} {\ll {\big( (\beta_n \cup \mathbb I_{n-1})) \mathscr{F}^N_n \big)|}_{{\tilde{\eta}_{\xi,\lambda}}} , {\mathscr{G}^N_n|}_{{\tilde{\eta}_{\xi,\lambda}}}\gg|}_{\psi_{\xi,\lambda}}=$$
$$=^{Lemma \ \ref{L:3}} \xi^{(N-1)\lambda w(\beta_n)} {\ll { (\beta_n \cup \mathbb I_{n-1})) \big( \mathscr{F}^N_n |}_{{\tilde{\eta}_{\xi,\lambda}}} \big), {\mathscr{G}^N_n|}_{{\tilde{\eta}_{\xi,\lambda}}}\gg|}_{\psi_{\xi,\lambda}}=$$
$$=^{ Prop \ \ref{P:3} \ \ \ref{P:4}} \xi^{(N-1)\lambda w(\beta_n)} \ll { (\beta_n \cup \mathbb I_{n-1}))  \mathscr{F}^{N,\lambda}_n, {\mathscr{G}^{N,\lambda}_n}\gg|}_{\psi_{\xi,\lambda}}=$$
$$=^{Thm \ \ref{C1:pr2}} \Phi_N(L,\lambda).$$

\end{proof}
}

\
\
\url{https://www.maths.ox.ac.uk/people/cristina.palmer-anghel}

\end{document}